\providecommand{\Div}{\operatorname{div}}          
\providecommand{\Det}{\operatorname{det}}                    
\newcommand{\VC}{{\mathbf{C}}}
\newcommand{\VE}{{\mathbf{E}}}
\newcommand{\VF}{{\mathbf{F}}}
\newcommand{\VN}{{\mathbf{N}}}
\newcommand{\VR}{{\mathbf{R}}}
\newcommand{\Dsf}{\mathsf{D}}
\providecommand{\Cl}{{\cal L}}
\providecommand{\Cv}{{\cal V}}
\providecommand{\Cw}{{\cal W}}
\newcommand{\eps}{\varepsilon}
\newcommand{\Ui}{U^{(1)}}
\newcommand{\Ri}{R^{(1)}}
\newcommand{\Uii}{U^{(2)}}
\newcommand{\Uiii}{U^{(3)}}
\newcommand{\Rii}{R^{(2)}}
\newcommand{\vi}{u^{(1)}}
\newcommand{\vii}{u^{(2)}}
\newcommand{\PPi}{P^{(1)}}
\newcommand{\Si}{S^{(1)}}
\newcommand{\Pii}{P^{(2)}}
\newcommand{\Piii}{P^{(3)}}
\newcommand{\Sii}{S^{(2)}}
\newcommand{\wi}{p^{(1)}}
\newcommand{\wii}{p^{(2)}}
\newcommand{\Qi}{Q^{(1)}}
\newcommand{\Ti}{T^{(1)}}
\newcommand{\Qii}{Q^{(2)}}
\newcommand{\Tii}{T^{(2)}}
\newcommand{\mi}{q^{(1)}}
\newcommand{\mii}{q^{(2)}}
\newcommand{\epsb}{\bm{\epsilon}}
\newcommand{\mRi}{\mathcal{R}^{(1)}}
\newcommand{\pimL}{\partial_{\ell}^{(1)}\mathcal{L}}
\newcommand{\mRii}{\mathcal{R}^{(2)}}
\newcommand{\piimL}{\partial_\ell^{(2)}\mathcal{L}}
\newcommand{\li}{\ell_1}
\newcommand{\lii}{\ell_2}
\numberwithin{equation}{section}
  \theoremstyle{definition}
  \newtheorem{theorem}{Theorem}[section]
  \newtheorem{proposition}[theorem]{Proposition}
  \newtheorem{lemma}[theorem]{Lemma}
  \newtheorem{definition}[theorem]{Definition}
  \newtheorem{remark}[theorem]{Remark}
  \newtheorem{assumption}{Assumption}
  \newtheorem*{assumption*}{Assumption}
\def\Xint#1{\mathchoice
{\XXint\displaystyle\textstyle{#1}}%
{\XXint\textstyle\scriptstyle{#1}}%
{\XXint\scriptstyle\scriptscriptstyle{#1}}%
{\XXint\scriptscriptstyle\scriptscriptstyle{#1}}%
\!\int}
\def\XXint#1#2#3{{\setbox0=\hbox{$#1{#2#3}{\int}$ }
\vcenter{\hbox{$#2#3$ }}\kern-.6\wd0}}
\def\dashint{\Xint-}
\begin{document}

\title{Adjoint based methods to compute higher order topological derivatives with an application to elasticity}
\author{Phillip Baumann, Kevin Sturm}

\maketitle

\tableofcontents
\newpage
\begin{abstract}
\noindent\textbf{Purpose - }The goal of this paper is to give a comprehensive and short review on how 
to compute the first and second order topological derivative and potentially higher order 
topological derivatives for PDE constrained 
shape functionals.\newline
\noindent\textbf{Design/methodology/approach - }We employ the adjoint and averaged adjoint variable within the Lagrangian framework and compare three different adjoint based methods to compute 
higher order topological derivatives. To illustrate the methodology proposed in this paper, we then apply the methods to a linear 
elasticity model.\newline
\noindent\textbf{Findings - }We compute the first and second order topological derivative of the linear elasticity model for various shape functionals in dimension two and three using Amstutz' method, the averaged adjoint method and Delfour's method.\newline
\noindent\textbf{Originality/value - }In contrast to other contributions regarding this subject, we not only compute the first and second order topological derivative, but additionally give some insight on various methods and compare their applicability and efficiency with respect to the underlying problem formulation.\newline
\noindent\textbf{Keywords - }Topological derivative, topology optimisation, elasticity.\newline
\noindent\textbf{Paper type - }Research paper.
\end{abstract}

\section{Introduction}

In this paper we provide a review of techniques for the computation of the first and second topological derivative. 
We compare and apply three techniques to the following model problem:
Let $\Dsf\subset \VR^d$, $d=2,3$, be a bounded and smooth domain. The goal is it to compute the topological derivative of the cost functional
\begin{equation}\label{eq:cost_func}
    \mathcal J(\Omega) = \gamma_f\int_\Dsf f_\Omega\cdot u_\Omega\;dx+\gamma_g\int_\Dsf|\nabla u_\Omega-\nabla u_d|^2\;dx +\gamma_m\int_{\Gamma^m}|u_\Omega-u_m|^2\;dS,    
\end{equation}
$\gamma_f, \gamma_g, \gamma_m\in \VR$, $\gamma_g=\gamma_m=0$ in $d=2$, $u_d\in H^1(\Dsf)$, $u_m\in L_2(\Gamma^m)$ subject to a design region $\Omega \subset \Dsf$ and the displacement field $u_\Omega\in H^1(\Dsf)^d$ solves the equation of linear elasticity
\begin{equation}\label{eq:elasticity_linear}
    \int_\Dsf \VC_\Omega \epsb(u_\Omega):\epsb(\varphi)\;dx = \int_\Dsf f_\Omega \cdot\varphi\;dx+\int_{\Gamma^N}u_N\cdot \varphi\;dS \quad \text{ for all } \varphi\in H^1_{\Gamma}(\Dsf)^d,
\end{equation}
$u_\Omega|_\Gamma=u_D$,
where $\Gamma\subset \partial \Dsf$, $\Gamma_N:=\partial \Dsf\setminus \Gamma$, $H^1_{\Gamma}(\Dsf)^d:= \{\varphi\in H^1(\Dsf):\; \varphi=0 \text{ on } \Gamma\}$, $u_D\in L_2(\Gamma)$, $u_N\in L_2(\Gamma^N)$,
\begin{equation}
        \VC_\Omega  = \VC_1 \chi_\Omega + \VC_2 \chi_{\Dsf\setminus\overline{\Omega}}, \quad f_\Omega  = f_1 \chi_\Omega + f_2 \chi_{\Dsf\setminus\overline{\Omega}}
\end{equation}
and $\VC_1,\VC_2:\VR^{d\times d} \to \VR^{d\times d}$ linear functions, $f_1,f_2\in H^1(\Dsf)^d \cap C^{2}(B_\delta(x_0))^d$, $\delta>0$ and $\epsb(u)$ denotes the symmetrised 
gradient of $u$, that is, $\epsb(u)= \frac12(\nabla u + \nabla u^\top)$.\newline
We are going to discuss the topological derivative of a singularly perturbed domain by adding a small inclusion $\omega_\eps\in \Dsf\setminus\overline{\Omega}$ to $\Omega$, i.e. $\Omega_\eps=\Omega\cup\omega_\eps$. In fact, we are going to consider the case $\Omega=\emptyset$, but the remaining cases $\Omega\neq\emptyset$ and $\Omega_\eps=\Omega\setminus\omega_\eps$ for $\omega_\eps\subset\Omega$ can be treated in a similar fashion.


The topological derivative was first introduced in \cite{a_ESKOSC_1994a} and later mathematically justified in \cite{a_SOZO_1999a,a_GAGUMA_2001a} with an application to linear elasticity. Follow up works of many authors studied the asymptotic behavior of shape functionals for various partial differential equations. For instance for Kirchhoff plates \cite{a_AMNO_2010a}, electrical impedance tomography \cite{a_HILA_2008a,a_HILANO_2011a}, Maxwell's equation \cite{a_MAPOSA_2005a}, Stokes' equation \cite{a_HAMA_2004a} and elliptic variational inequalities \cite{a_HILA_2011a}. We also refer to the monograph \cite{b_NOSO_2013a} for more applications and references therein.


The idea of the topological derivative is to perturb the design variable  with a singular perturbation and study the asymptotic behavior of the shape functional $\mathcal J$. The asymptotic expansion encodes information about the optimal topology of the design region and 
can be used numerically either in an iterative level-set method \cite{a_AMAN_2006a} or one-shot type methods \cite{a_HILA_2008a,a_SOZO_1999a} to obtain an optimal topology of the design region (in the sense of stationary points). 
Higher order topological derivatives are a viable means to improve the accuracy of one-shot type methods as done in \cite{a_HILA_2008a,a_BOCO_2017a}.  


Higher order topological derivatives are less studied, but have been computed for several problems. For instance 
in \cite{a_HILA_2008a} second order topological derivatives for an  electrical impedance tomography problem is studied. 
In \cite{a_BO_2018a} higher order topological derivatives in dimension two for linear elasticity using the method of \cite{a_NOFETAPA_2003aa}
is established. In \cite{a_BOCO_2017a} the expansion of higher order topological derivatives for a least square misfit function 
for linear elasticity in dimension three exploiting a Green's function is established. In \cite{a_BO_2018a} a similar misfit function subject to a scattering problem is expanded. 


The first ingredient to compute higher topological derivatives is the asymptotic behavior of the 
solution of the state equation, in our concrete example this is equation \eqref{eq:elasticity_linear}. The second ingredient is an expansion of the shape function and is mostly, although not necessary, done via the introduction of an adjoint variable. As is well-known from optimal control and shape optimisation theory (see, e.g., \cite{b_HIULUL_2009a,b_ITKU_2008a}) the advantage of using an adjoint variable is the numerically efficient computation of the topological derivative. First order topological 
derivatives for ball inclusions and linear problems can be computed solely from the knowledge of the state variable and the adjoint state variable; see, e.g., in \cite{a_SOZO_1999a}. For higher order topological derivatives in most cases additional exterior partial differential equations, so-called corrector equations, have to be solved, although in some cases these can also be solved explicitly; \cite{a_HILA_2008a}.


While most papers deal with linear partial differential equations also nonlinear partial differential equations have been studied. 
We refer to \cite{a_IGNAROSOSZ_2009a,a_BEMARA_2017a,a_ST_2020a,a_AM_2006b} for the study of first order topological derivatives for semilinear elliptic partial differential equations. To the authors knowledge there is no research for higher order topological derivatives for these equations and thus remains an open and challenging topic. Also quasi-linear problems have been studied first in \cite{a_AMBO_2017a} and more recently in \cite{a_GAST_2020a,a_AMGA_2019a,a_GAST_2021a}. In particular in \cite{a_GAST_2020a} a projection trick is used to avoid the use of a fundamental solution, which is in contrast to most works on semilinear partial differential equations.


An established method to compute the topological derivative and higher derivatives is the method of Amstutz \cite{phd_AM_2003a}. It amounts to study the asymptotic behavior of a perturbed adjoint equation, which depends on the unperturbed state equations. It has been used in some of the papers mentioned above such as \cite{a_MAPOSA_2005a,a_HAMA_2004a} and also \cite{a_AM_2006a,a_AM_2006b}, to only mention a few. The advantage of the method is that it  simplifies the computation of the topological derivative compared to a direct computation of the topological derivative by expanding the cost function with Taylor's expansion.


A second method which has been introduced in the context of shape optimisation and the computation of shape derivatives was used 
in \cite{a_ST_2020a} to compute topological derivatives for semilinear problems. It has been extended in \cite{a_GAST_2020a} 
to compute topological derivatives for quasilinear problems. In contrast to Amstutz' method the averaged adjoint variable 
also depends on the perturbed state equation, which makes the anaylsis of the asymptotic behavior of the adjoint variable 
more challenging. However, the advantage is that it seems to be readily applicable to a wide range of cost functions and also 
the computation of the final formula for higher order topological derivatives is straight forward once the asymptotics of the 
averaged adjoint variable is known. 


A third method was introduced in \cite{c_DE_2018b} and uses the usual unperturbed adjoint variable. The advantage is that no 
analysis of an perturbed adjoint variable is required, but as shown in \cite{a_GAST_2020a} it seems to be more difficult to apply 
this method to certain cost functions, such as the $L_2$-tracking type cost functions. 


Finally let us mention the method of \cite{a_NOFETAPA_2003aa} where a method to compute the topological derivatives is proposed as the limit of the shape derivative. This method is not always applicable, but it provides a fast method to compute also higher order topological derivatives; see \cite{a_SIMATO_2010a}.


In this paper we thoroughly study and review the first three mentioned methods and apply them to the model problem of linear elasticity introduced in \eqref{eq:elasticity_linear}. We first exam the asymptotic behavior of \eqref{eq:elasticity_linear} up to order two and then study the asymptotic behavior of Amstutz' perturbed adjoint variable and the averaged adjoint variable. We then apply the three methods to compute first and second order topological derivatives for three types of cost functions, the compliance, a boundary tracking-type cost function and a tracking-type cost function of the gradient.

\paragraph{Structure of the paper}
In Section \ref{sec:abstract_setting} we discuss three different teqhniques to compute the topological derivative. This is done by introducing the Lagrangian setting, which simplifies the notation.\newline
In Section~\ref{sec:sens_an_elasticity} we derive the complete asymptotic analysis for a linear elasticity model including 
remainder estimates. The section covers both the two dimensional and three dimensional case, whose analysis differs since 
the fundamental solution of the linear elasticity equation has a different asymptotic behavior.\newline
In Section \ref{sec:sens_adjoint} we derive the asymptotic analysis for the adjoint and averaged adjoint variable respectively. This is done in a similar fashion to Section ~\ref{sec:sens_an_elasticity}.\newline
In Section \ref{sec:topo_deriv} we employ the previously derived results to compute the topological derivative. That is, we apply the theoretical background derived in Section \ref{sec:abstract_setting} to our elasticity model and a versatile cost function.

\paragraph{Notation}
In the whole paper we denote by $W^1_p(\Dsf)$ (resp. their vector-valued counter parts by $W^1_p(\Dsf)^d$) for $1\le p \le \infty$ standard Sobolev spaces equipped with the usual norm. The gradient of a function $\varphi\in W^1_p(\Dsf)$ (resp. $\varphi\in W^1_p(\Dsf)^d$) will be denoted $\nabla \varphi$. Directional derivatives of functions $f:U\subset \VE\to \VR$ at $x\in \VE$ defined on an open subset $U\subset \VE$ of  Banach space $\VE$ will be denoted by $\partial f(x)(v)$, $x\in U$, $v\in \VE$ whenever it exists. Similarly for functions $(u,v)\mapsto f(u,v):\VE\times \VF\to \VR$ we denote their partial derivative with respect to the first (resp. second) argument by $\partial_u f(x_1,x_2)(v)$ (resp. $\partial_v f(x_1,x_2)(w)$).  We further define for $1 < p<\infty$
\[
    BL_p(\VR^d)^d := \{ \varphi \in W^1_{p,loc}(\Dsf)^d:\; \nabla \varphi \in L_p(\Dsf)^{d\times d}\}.
\]
Then we define the Beppo-Levi space $\dot{BL}_p(\VR^d)^d:= BL(\VR^d)^d/\VR$ equipped with $\|[\varphi] \|_{\dot{BL}_p(\VR^d)^d} := \|\nabla \varphi\|_{L_p(\VR^d)^{d\times d}}$, $\varphi \in [\varphi]$, $[\varphi]\in \dot{BL}_p(\VR^d)^d$. Here $/\VR$ means that we quotient out constants. 

The Euclidean norm on $\VR^d$ will be denoted $ |\cdot| $ and the corresponding operator norm $\VR^{d\times d}$ will be also denoted by $|\cdot|$. The Euclidean ball of radius $r>0$ located at $x_0\in \VR^d$ will be denoted by $B_r(x_0)$. Additionally, for a domain domain $\Omega$ with sufficiently smooth boundary $\partial \Omega$, we denote the outer normal vector by $n$.
The Slobodeckij seminorm $|\cdot|_{H^{\frac{1}{2}}(\Omega)}$ for $\Omega \subset \VR^d$ is defined by
\[|u|_{H^{\frac{1}{2}}(\Omega)}:=\left(\int_\Omega\int_\Omega \frac{|u(x)-u(y)|^2}{|x-y|^{d+1}}\; dxdy\right)^{\frac{1}{2}}.\]
For convenience we will later on use the abbreviated notation of the averaged integral defined as
\[\dashint_\Omega f\; dx:=\frac{1}{|\Omega|}\int_\Omega f\; dx,\]
for a bounded set $\Omega\subset \VR^d$.

\section{Lagrangian techniques to compute the topological derivative}\label{sec:abstract_setting}
In this section, we review Lagrangian techniques to compute topological derivatives. 
While it is well-established in optimisation algorithms to compute derivatives of 
PDE constrained problems with the help of Lagrangians it seems rather new to the 
topology optimisation community. However, we will show that actually Amustutz's method can be interpreted as a Lagrangian approach by introducing a suitable Lagrangian function and recasting his original result in terms of this Lagrangian. More recently another Lagrangian approach was proposed in \cite{c_DEST_2016a} where essentially an extra term appears when differentiating the Lagrangian function. Finally, we will review Delfour's approach of \cite[Thm.3.3]{c_DE_2018b} using only the unperturbed adjoint state variable.

\subsection{Abstract setting}
Let $\Cv,\Cw$ be real Hilbert spaces. For all parameter $\eps\ge 0$ small consider 
a function $u_\eps\in \Cv$ solving the variational problem of the form
\begin{equation}\label{eq:abstract_state}
    a_\eps(u_\eps,\varphi) = f_\eps(\varphi) \quad \text{ for all } \varphi\in \Cw,
\end{equation}
where $a_\eps$ is a bilinear form on $\Cv\times \Cw$ and $f_\eps$ is a linear form on $\Cw$, respectively. Throughout we assume that this abstract state equation admits a unique solution and 
that $u_\eps-u_0\in \Cw$ for all $\eps$.  Consider now a cost function 
\begin{equation}
    j(\eps) = J_\eps(u_\eps) \in \VR,
\end{equation}
where for all $\eps\ge0$ the functional $J_\eps:\Cv\to \VR$ is differentiable at $u_0$. In the following sections we review methods how to obtain an asymptotic expansion of $j(\eps)$ at $\eps =0$. For this purpose we introduce the Lagrangian function 
\[ 
    \Cl(\eps,u,v)= J_\eps(u) + a_\eps(u,v) - f_\eps(v), \quad u \in  \Cv, \; v\in \Cw.
\]

\subsection{Amstutz' method}
We first review the approach of Amstutz \cite{phd_AM_2003a}; see also \cite[Prop. 2.1]{a_AM_2006a}. This approach has been proved to be versatile and has 
been applied to a number of linear and nonlinear problems. For instance in \cite{a_AM_2006a} a linear transmission problem 
was examined and its first order topological derivative was computed. In \cite{a_AMNOANVA_2014a} the topological 
derivative of elliptic differentiation equations with $2m$ differential operator was derived. In \cite{a_AM_2006b} 
the topological derivative for a class of certain nonlinear equations has been studied.

\begin{proposition}[{\cite[Prop. 2.1]{a_AM_2006a}}]\label{thm:Amstutz_main}
 Assume the following hypotheses hold.
\begin{itemize}
    \item[(1)] There exist numbers $\delta a^{(1)}$ and $\delta f^{(1)}$ and a function $\ell_1:\VR^+\rightarrow\VR^+$ with $\displaystyle\lim_{\eps\searrow 0}\ell_1(\eps)=0$, such that
        \begin{align}
            (a_\eps-a_0)(u_0,p_\eps) & = \ell_1(\eps)\delta a^{(1)} + o(\ell_1(\eps)),  \\
            (f_\eps-f_0)(p_\eps) & = \ell_1(\eps)\delta f^{(1)} + o(\ell_1(\eps)),
        \end{align}
        where $p_\eps\in \Cw$ is the adjoint state satisfying
        \begin{equation}\label{eq:adjoint_am}
            a_\eps(\varphi,p_\eps) = - \partial J_\eps(u_0)(\varphi) \quad \text{ for all } \varphi\in \Cv.
        \end{equation}
\item[(2)] There exist two numbers $\delta J_1^{(1)}$ and $\delta J_2^{(1)}$, such that
    \begin{align}
        J_\eps(u_\eps) & = J_\eps(u_0) + \partial J_\eps(u_0)(u_\eps - u_0) + \ell_1(\eps)\delta J_1^{(1)} + o(\ell_1(\eps)), \\
        J_\eps(u_0) & = J_0(u_0) +  \ell_1(\eps) \delta J_2^{(1)} + o(\ell_1(\eps)).
    \end{align}
\end{itemize}
Then the following expansion holds
\begin{equation}
    j(\eps) = j(0) + \ell_1(\eps)(\delta a - \delta f^{(1)} + \delta J_1^{(1)} + \delta J_2^{(1)}) + o(\ell_1(\eps)).
\end{equation}
\end{proposition}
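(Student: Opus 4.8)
The plan is to run the Lagrangian argument in its cleanest form: express both $j(\eps)$ and $j(0)$ as values of $\Cl$ at judicious arguments, insert the intermediate quantity $\Cl(\eps,u_0,p_\eps)$ as a pivot, and estimate the two resulting differences using hypotheses (1) and (2). First I would record the two ``Lagrangian identities''. Since $p_\eps\in\Cw$ is an admissible test function in the state equation \eqref{eq:abstract_state}, we have $a_\eps(u_\eps,p_\eps)-f_\eps(p_\eps)=0$, hence $j(\eps)=J_\eps(u_\eps)=\Cl(\eps,u_\eps,p_\eps)$; and since $p_\eps\in\Cw$ is equally admissible in the unperturbed state equation $a_0(u_0,\cdot)=f_0(\cdot)$, we get $a_0(u_0,p_\eps)-f_0(p_\eps)=0$, so $\Cl(0,u_0,p_\eps)=J_0(u_0)=j(0)$. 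Thus it suffices to estimate each bracket in $j(\eps)-j(0)=\bigl(j(\eps)-\Cl(\eps,u_0,p_\eps)\bigr)+\bigl(\Cl(\eps,u_0,p_\eps)-j(0)\bigr)$.

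For the first bracket I would subtract $\Cl(\eps,u_0,p_\eps)=J_\eps(u_0)+a_\eps(u_0,p_\eps)-f_\eps(p_\eps)$ from $j(\eps)=J_\eps(u_\eps)$, and then use the state equation tested with $p_\eps$ together with the adjoint equation \eqref{eq:adjoint_am} tested with $u_\eps-u_0\in\Cv$ to obtain
\[
a_\eps(u_0,p_\eps)-f_\eps(p_\eps)=-a_\eps(u_\eps-u_0,p_\eps)=\partial J_\eps(u_0)(u_\eps-u_0).
\]
This collapses the first bracket to $j(\eps)-\Cl(\eps,u_0,p_\eps)=J_\eps(u_\eps)-J_\eps(u_0)-\partial J_\eps(u_0)(u_\eps-u_0)$, which by the first line of hypothesis (2) equals $\ell_1(\eps)\delta J_1^{(1)}+o(\ell_1(\eps))$.

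For the second bracket I would use $\Cl(0,u_0,p_\eps)=j(0)$ to write $\Cl(\eps,u_0,p_\eps)-j(0)=\bigl(J_\eps(u_0)-J_0(u_0)\bigr)+(a_\eps-a_0)(u_0,p_\eps)-(f_\eps-f_0)(p_\eps)$, and then invoke the second line of hypothesis (2) for the first summand and hypothesis (1) for the remaining two, giving $\ell_1(\eps)\bigl(\delta J_2^{(1)}+\delta a^{(1)}-\delta f^{(1)}\bigr)+o(\ell_1(\eps))$. Adding the two contributions yields $j(\eps)=j(0)+\ell_1(\eps)\bigl(\delta a^{(1)}-\delta f^{(1)}+\delta J_1^{(1)}+\delta J_2^{(1)}\bigr)+o(\ell_1(\eps))$, as claimed.

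I do not expect a genuine analytic obstacle here: the substantive work has been externalised into hypotheses (1) and (2), which package the state asymptotics and the (possibly nonlinear) behaviour of $J_\eps$. The only delicate point is the algebraic bookkeeping that makes the Taylor-type remainder $J_\eps(u_\eps)-J_\eps(u_0)-\partial J_\eps(u_0)(u_\eps-u_0)$ emerge precisely from combining the state and adjoint equations at the fixed perturbation level $\eps$; once the two Lagrangian identities are in place, the rest telescopes. I would also verify the routine memberships used along the way — $u_\eps-u_0\in\Cv$ so that it is a legitimate test function in \eqref{eq:adjoint_am}, $p_\eps\in\Cw$ for \eqref{eq:abstract_state} and its unperturbed counterpart, and differentiability of $J_\eps$ at $u_0$ — all of which are granted in the abstract setting.
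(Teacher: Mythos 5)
Your proof is correct and follows essentially the same route the paper takes: the paper cites this proposition without proof but establishes its Lagrangian reformulation (Proposition~\ref{prop:main_am}) via exactly your telescoping decomposition through the pivot $\Cl(\eps,u_0,p_\eps)$, using $\Cl(\eps,u_\eps,p_\eps)=\Cl(\eps,u_\eps,0)$ and $\Cl(0,u_0,p_\eps)=\Cl(0,u_0,0)$. Your explicit reduction of the first bracket to $J_\eps(u_\eps)-J_\eps(u_0)-\partial J_\eps(u_0)(u_\eps-u_0)$ via the state and adjoint equations is precisely the identification the paper asserts with ``In particular $\mRi+\pimL=\delta a^{(1)}-\delta f^{(1)}+\delta J_1^{(1)}+\delta J_2^{(1)}$'' but does not spell out.
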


We will reformulate and generalise the previous result in terms of a Lagrangian function $\Cl(\eps,u,v)$ and additionally state a result for the second order derivative.

\begin{proposition}\label{prop:main_am}
\begin{itemize}
\item[(i)] Let $\li:\VR^+\to\VR^+$ be a function with $\displaystyle\lim_{\eps\searrow 0}\li(\eps)=0$. Furthermore, assume that the limits
 \begin{align}
     \mRi(u_0,p_0) & := \lim_{\eps\searrow0} \frac{\Cl(\eps,u_\eps,p_\eps) - \Cl(\eps,u_0,p_\eps)}{\li(\eps)}\label{eq:deriv_am_11},\\
     \pimL(0,u_0,p_0) & := \lim_{\eps\searrow0} \frac{\Cl(\eps,u_0,p_\eps) - \Cl(0,u_0,p_\eps) }{\li(\eps)}\label{eq:deriv_am_12},
 \end{align}
 exist.  Then we have the following expansion:
\begin{equation}
    j(\eps) = j(0) + \li(\eps)(\mRi(u_0,p_0)+\pimL(0,u_0,p_0)) + o(\li(\eps)).
\end{equation}
In particular $\mRi(u_0,p_0)+\pimL(0,u_0,p_0) = \delta a^{(1)} - \delta f^{(1)} + \delta J_1^{(1)} + \delta J_2^{(1)}$, where $\delta a^{(1)}, \delta f^{(1)}$, $\delta J_1^{(1)}, \delta J_2^{(1)}$ are as in Proposition~\ref{thm:Amstutz_main}.

\item[(ii)] Let $\lii:\VR^+\to\VR^+$ be a function with $\displaystyle\lim_{\eps\searrow 0}\frac{\lii(\eps)}{\li(\eps)}=0$. Furthermore, assume that the assumptions under (i) hold and that the limits
 \begin{align}
     \mRii(u_0,p_0) & := \lim_{\eps\searrow0} \frac{\Cl(\eps,u_\eps,p_\eps) - \Cl(\eps,u_0,p_\eps)-\li(\eps)\mRi(u_0,p_0)}{\lii(\eps)},\label{eq:deriv_am_21}\\
     \piimL(0,u_0,p_0) & := \lim_{\eps\searrow0} \frac{\Cl(\eps,u_0,p_\eps) - \Cl(0,u_0,p_\eps)-\li(\eps)\pimL(0,u_0,p_0)}{\lii(\eps)},\label{eq:deriv_am_22}
 \end{align}
 exist.  Then we have the following expansion
\begin{align*}
    j(\eps) =   j(0) + &  \li(\eps)(\mRi(u_0,p_0)+\pimL(0,u_0,p_0))\\
                     + &  \lii(\eps)(\mRii(u_0,p_0)+\piimL(0,u_0,p_0))+ o(\lii(\eps)).
\end{align*}

\end{itemize}
\end{proposition}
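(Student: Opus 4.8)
The plan rests on two structural identities for the Lagrangian $\Cl(\eps,u,v)=J_\eps(u)+a_\eps(u,v)-f_\eps(v)$. First, testing the state equation \eqref{eq:abstract_state} with $\varphi=p_\eps\in\Cw$ gives $a_\eps(u_\eps,p_\eps)-f_\eps(p_\eps)=0$, so
\[
  \Cl(\eps,u_\eps,p_\eps)=J_\eps(u_\eps)=j(\eps)\qquad\text{for every }\eps\ge 0 .
\]
Second, at $\eps=0$ the affine map $v\mapsto\Cl(0,u_0,v)$ is in fact constant on $\Cw$ and equal to $J_0(u_0)=j(0)$, because $a_0(u_0,v)-f_0(v)=0$ for all $v\in\Cw$; in particular $\Cl(0,u_0,p_\eps)=j(0)$. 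Combining the two I obtain, for every $\eps>0$ and with no remainder terms, the exact decomposition $j(\eps)-j(0)=A(\eps)+B(\eps)$, where $A(\eps):=\Cl(\eps,u_\eps,p_\eps)-\Cl(\eps,u_0,p_\eps)$ and $B(\eps):=\Cl(\eps,u_0,p_\eps)-\Cl(0,u_0,p_\eps)$.

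For part (i) I divide this identity by $\li(\eps)$ and let $\eps\searrow0$: by the assumed existence of the limits \eqref{eq:deriv_am_11}--\eqref{eq:deriv_am_12} we have $A(\eps)/\li(\eps)\to\mRi(u_0,p_0)$ and $B(\eps)/\li(\eps)\to\pimL(0,u_0,p_0)$, which is exactly the claimed first-order expansion. For part (ii) I keep the exact identity, subtract $\li(\eps)(\mRi(u_0,p_0)+\pimL(0,u_0,p_0))$, and regroup as $j(\eps)-j(0)-\li(\eps)(\mRi+\pimL)=(A(\eps)-\li(\eps)\mRi)+(B(\eps)-\li(\eps)\pimL)$; dividing by $\lii(\eps)$ and invoking \eqref{eq:deriv_am_21}--\eqref{eq:deriv_am_22}, the right-hand side converges to $\mRii(u_0,p_0)+\piimL(0,u_0,p_0)$, which gives the second-order expansion (the hypothesis $\lii(\eps)/\li(\eps)\to0$ only guarantees that the two correction terms are genuinely of different order).

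To identify the limits with the constants of Proposition~\ref{thm:Amstutz_main} I expand each term. By bilinearity $A(\eps)=J_\eps(u_\eps)-J_\eps(u_0)+a_\eps(u_\eps-u_0,p_\eps)$, and inserting the adjoint equation \eqref{eq:adjoint_am} with $\varphi=u_\eps-u_0$ reduces this to $A(\eps)=J_\eps(u_\eps)-J_\eps(u_0)-\partial J_\eps(u_0)(u_\eps-u_0)$; hypothesis (2) of Proposition~\ref{thm:Amstutz_main} then gives $\mRi(u_0,p_0)=\delta J_1^{(1)}$. Similarly $B(\eps)=(J_\eps-J_0)(u_0)+(a_\eps-a_0)(u_0,p_\eps)-(f_\eps-f_0)(p_\eps)$, so hypotheses (1)--(2) give $\pimL(0,u_0,p_0)=\delta a^{(1)}-\delta f^{(1)}+\delta J_2^{(1)}$, and adding the two yields the asserted identity.

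Aside from these expansions the proof is just bookkeeping around the two identities of the first paragraph, so there is no genuinely hard step; the only thing requiring attention is that the increments used as test functions in the state and adjoint equations ($p_\eps$ in \eqref{eq:abstract_state} at $\eps=0$, and $u_\eps-u_0$ in \eqref{eq:adjoint_am}) are admissible, which is ensured by the standing assumptions of the section, in particular $p_\eps\in\Cw$ and $u_\eps-u_0\in\Cw$. All the substantive analysis — verifying hypotheses (i)--(ii), and (1)--(2) for the identification — is carried out later in the concrete elasticity model and does not enter here.
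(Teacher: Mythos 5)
Your proof is correct and follows essentially the same route as the paper: the exact telescoping decomposition $j(\eps)-j(0)=\bigl[\Cl(\eps,u_\eps,p_\eps)-\Cl(\eps,u_0,p_\eps)\bigr]+\bigl[\Cl(\eps,u_0,p_\eps)-\Cl(0,u_0,p_\eps)\bigr]$ obtained from testing the state equations at $\eps$ and at $0$ with $p_\eps$, followed by division by $\li(\eps)$ (resp.\ subtraction of the first-order term and division by $\lii(\eps)$). Your explicit identification of $\mRi=\delta J_1^{(1)}$ and $\pimL=\delta a^{(1)}-\delta f^{(1)}+\delta J_2^{(1)}$ via the adjoint equation is a correct spelling-out of a step the paper only asserts.
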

\begin{proof}
   ad (i): Using that $\Cl(\eps,u_\eps,0) = \Cl(\eps,u_\eps,p_\eps)$ and $\Cl(0,u_0,p_\eps) = \Cl(0,u_0,0)$ we get
\begin{align}
j(\eps)-j(0)=& \Cl(\eps,u_\eps,0) - \Cl(0,u_0,0) \\ =& \Cl(\eps,u_\eps,p_\eps) - \Cl(\eps,u_0,p_\eps) \\
                                       & + \Cl(\eps,u_0,p_\eps) - \Cl(0,u_0,p_\eps).
\end{align}
Now the result follows by dividing by $\ell_1(\eps)$ for $\eps >0$ and passing to the limit $\eps\searrow0$.\newline
ad (ii): This follows the same lines as the proof of item (i) and is left to the reader.

\end{proof}

\begin{remark}
\begin{itemize}
 \item[(i)] Checking the expansions \eqref{eq:deriv_am_12},\eqref{eq:deriv_am_22} in applications usually requires some regularity of the state $u_0$ and knowledge of the asymptotics of the adjoint state $p_\eps$ on a small domain of size $\eps$.
   \item[(ii)] The computation of the asymptotic expansions \eqref{eq:deriv_am_11},\eqref{eq:deriv_am_21}
    requires the study of the asymptotic behaviour of $u_\eps$ on the whole domain $\Dsf$. 
    This often causes problems, especially in dimension two. The reader will find an application of this method in Section \ref{sec:deriv_am}.
\end{itemize}
\end{remark}


\subsection{Averaged adjoint method}
Another approach to compute topological derivatives was proposed in \cite{a_ST_2020a} and applied 
to nonlinear problems in \cite{a_GAST_2020a,a_ST_2020a,a_GAST_2021a} and used for the optimisation on surfaces in \cite{a_GAST_2020b}. Recall the Lagrangian function
\begin{equation}\label{eq:definition_lagrangian}
    \Cl(\eps,u,v) = J_\eps(u) + a_\eps(u,v)- f_\eps(v), \quad u\in \Cv,\; v\in\Cw.
\end{equation}
We henceforth assume that for all $(\varphi,q)\in \Cv\times\Cw$ and $\eps\ge 0$ the function 
    \begin{equation}
	s \mapsto \partial_u \Cl(\eps, su_\eps + (1-s)u_0,q)(\varphi):[0,1]\to \VR
\end{equation}
is continuously differentiable. With the Lagrangian we can define the averaged adjoint equation associated with state variables $u_\eps$ (solution of \eqref{eq:abstract_state} for $\eps >0$) and $u_0$ (solution of \eqref{eq:abstract_state} for $\eps=0$): find $q_\eps \in \Cw$, such that
    \begin{equation}\label{eq:aa_equation}
	\int_0^1 \!\!\partial_u \Cl(\eps,su_\eps + (1-s)u_0 ,q_\eps)(\varphi)\, ds=0 \quad \text{ for all } \varphi\in \Cv. 
\end{equation}
In addition plugging $\varphi = u_\eps - u_0$ into \eqref{eq:aa_equation} one obtains $\Cl(\eps,u_\eps,0) = \Cl(\eps,u_0,q_\eps)$ for $\eps >0$, so the Lagrangian only depends on the unperturbed state $u_0$ and the averaged adjoint variable $q_\eps$. 
We henceforth assume that the averaged adjoint equation admits a unique solution.

\begin{proposition}\label{prop:main_av}
\begin{itemize}
\item[(i)] Let $\li:\VR^+\to\VR^+$ be a function with $\displaystyle\lim_{\eps\searrow 0}\li(\eps)=0$. Furthermore, assume that the limits
 \begin{align}
     \mRi(u_0,q_0) & := \lim_{\eps\searrow0} \frac{\Cl(\eps,u_0,q_\eps) - \Cl(\eps,u_0,q_0)}{\li(\eps)}\label{eq:deriv_av_11},\\
     \pimL(0,u_0,q_0) & := \lim_{\eps\searrow0} \frac{\Cl(\eps,u_0,q_0) - \Cl(0,u_0,q_0) }{\li(\eps)}\label{eq:deriv_av_12},
 \end{align}
 exist.  Then we have the following expansion
\begin{equation}
    j(\eps) = j(0) + \li(\eps)(\mRi(u_0,q_0)+\pimL(0,u_0,q_0)) + o(\li(\eps)).
\end{equation}

\item[(ii)] Let $\lii:\VR^+\to\VR^+$ be a function with $\displaystyle\lim_{\eps\searrow 0}\frac{\lii(\eps)}{\li(\eps)}=0$. Furthermore, assume that the assumption under (i) holds and the limits
 \begin{align}
     \mRii(u_0,q_0) & := \lim_{\eps\searrow0} \frac{\Cl(\eps,u_0,q_\eps) - \Cl(\eps,u_0,q_0)-\li(\eps)\mRi(u_0,q_0)}{\lii(\eps)}, \label{eq:deriv_av_21}\\
     \piimL(0,u_0,q_0) & := \lim_{\eps\searrow0} \frac{\Cl(\eps,u_0,q_0) - \Cl(0,u_0,q_0)-\li(\eps)\pimL(0,u_0,q_0)}{\lii(\eps)},\label{eq:deriv_av_22}
 \end{align}
 exist.  Then we have the following expansion
\begin{align*}
    j(\eps) = j(0) + &  \li(\eps)(\mRi(u_0,q_0)+\pimL(0,u_0,q_0))\\
                   + & \lii(\eps)(\mRii(u_0,q_0)+\piimL(0,u_0,q_0)) +o(\lii(\eps)).
\end{align*}

\end{itemize}
\end{proposition}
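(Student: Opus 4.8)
The plan is to reproduce, in the averaged adjoint setting, the short telescoping computation that underlies Proposition~\ref{prop:main_am}; the only genuinely new ingredient is the averaged adjoint identity already noted below \eqref{eq:aa_equation}. First I would record the two elementary facts that collapse the Lagrangian onto $j$. Since $u_\eps$ solves \eqref{eq:abstract_state} and $q_\eps\in\Cw$, we have $a_\eps(u_\eps,q_\eps)=f_\eps(q_\eps)$, hence $j(\eps)=J_\eps(u_\eps)=\Cl(\eps,u_\eps,0)=\Cl(\eps,u_\eps,q_\eps)$, and likewise $j(0)=\Cl(0,u_0,0)=\Cl(0,u_0,q_0)$. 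Next, testing \eqref{eq:aa_equation} with the admissible direction $\varphi=u_\eps-u_0\in\Cv$ and applying the fundamental theorem of calculus to $s\mapsto\Cl(\eps,su_\eps+(1-s)u_0,q_\eps)$ --- legitimate by the standing assumption that $s\mapsto\partial_u\Cl(\eps,su_\eps+(1-s)u_0,q)(\varphi)$ is continuously differentiable --- yields
\[
    \Cl(\eps,u_\eps,q_\eps)-\Cl(\eps,u_0,q_\eps)=\int_0^1\partial_u\Cl(\eps,su_\eps+(1-s)u_0,q_\eps)(u_\eps-u_0)\,ds=0,
\]
so that $j(\eps)=\Cl(\eps,u_0,q_\eps)$ for every $\eps>0$.

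With these in hand, for part~(i) I would simply telescope,
\begin{align*}
    j(\eps)-j(0) &= \Cl(\eps,u_0,q_\eps)-\Cl(0,u_0,q_0)\\
    &= \bigl(\Cl(\eps,u_0,q_\eps)-\Cl(\eps,u_0,q_0)\bigr)+\bigl(\Cl(\eps,u_0,q_0)-\Cl(0,u_0,q_0)\bigr),
\end{align*}
divide by $\li(\eps)$ and let $\eps\searrow0$: since the limits \eqref{eq:deriv_av_11} and \eqref{eq:deriv_av_12} exist, the first bracket equals $\li(\eps)\mRi(u_0,q_0)+o(\li(\eps))$ and the second equals $\li(\eps)\pimL(0,u_0,q_0)+o(\li(\eps))$, which is exactly the claimed first order expansion. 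For part~(ii) I would subtract $\li(\eps)\bigl(\mRi(u_0,q_0)+\pimL(0,u_0,q_0)\bigr)$ from the very same decomposition; the resulting remainder is precisely the sum of the two numerators appearing in \eqref{eq:deriv_av_21} and \eqref{eq:deriv_av_22}, so dividing by $\lii(\eps)$ and passing to the limit gives the second order expansion with coefficient $\mRii(u_0,q_0)+\piimL(0,u_0,q_0)$. This is the same bookkeeping as in Proposition~\ref{prop:main_am}, so the details of part~(ii) could equally well be left to the reader.

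At the level of the abstract framework this is all there is to it: no quantitative estimate is needed, the entire content being the collapse identities above together with the averaged adjoint identity. Hence I do not expect a real obstacle in the proposition itself; the actual work --- deferred to Sections~\ref{sec:sens_an_elasticity} and~\ref{sec:sens_adjoint} --- is to verify that the four limits \eqref{eq:deriv_av_11}--\eqref{eq:deriv_av_22} genuinely exist for the elasticity model and to identify the correct scales $\li$ and $\lii$, which hinges on the asymptotics of $u_\eps$ and of the averaged adjoint state $q_\eps$ near the inclusion $\omega_\eps$. The single step in the abstract argument that is not purely formal is the identity $\Cl(\eps,u_\eps,q_\eps)=\Cl(\eps,u_0,q_\eps)$: this is exactly why continuous differentiability of $s\mapsto\partial_u\Cl(\eps,su_\eps+(1-s)u_0,q)(\varphi)$ and well-posedness of \eqref{eq:aa_equation} were built into the standing hypotheses, and in a nonlinear problem establishing those would itself be the delicate point.
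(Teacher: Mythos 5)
Your proof is correct and follows essentially the same route as the paper: the key identity $\Cl(\eps,u_\eps,0)=\Cl(\eps,u_0,q_\eps)$ (which the paper records immediately after \eqref{eq:aa_equation} and which you rederive via the state equation, the fundamental theorem of calculus and the averaged adjoint equation) followed by the same two-term telescoping decomposition, division by $\li(\eps)$ (resp.\ $\lii(\eps)$ after subtracting the first-order term) and passage to the limit. The only cosmetic slip is that the increment $u_\eps-u_0$ lies in $\Cw$ (per the standing assumption), not in $\Cv$, matching the direction space in which $\partial_u\Cl$ is taken; this does not affect the argument.
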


\begin{proof}
ad (i): Recalling $\Cl(\eps,u_\eps,0) = \Cl(\eps,u_0,q_\eps)$ we have
\begin{align*}
j(\eps)-j(0)&=\Cl(\eps,u_\eps,0)-\Cl(0,u_0,0)\\
&=\Cl(\eps,u_0,q_\eps)-\Cl(0,u_0,q_0)\\
&=\Cl(\eps,u_0,q_\eps)-\Cl(\eps,u_0,q_0) + \Cl(\eps,u_0,q_0)-\Cl(0,u_0,q_0).
\end{align*}
Dividing by $\li(\eps)$ for $\eps >0$ and passing to the limit $\eps\searrow 0$, yields the result.\newline
ad (ii): Similar to item (i).
\end{proof}

The previous result can be readily generalised to compute the nth order topological derivative as 
shown in the following proposition.

\begin{proposition}[nth topological derivative]\label{thm:Sturm_main}
 Assume the following hypotheses hold.
\begin{itemize}
    \item[(1)] There exist numbers $\delta a^{(i)}$ and $\delta f^{(i)}$, $i=1,2,\ldots, n$ and a function $\ell_1:\VR^+\rightarrow\VR^+$ with $\displaystyle\lim_{\eps\searrow 0}\ell_1(\eps)=0$, such that
        \begin{align}
            (a_\eps-a_0)(u_0,q_0) & = \ell_1(\eps)\sum_{i=0}^{n-1} \eps^i\delta a^{(i+1)} + o(\eps^n\ell_1(\eps)), \label{eq:expand_a0}  \\
            (f_\eps-f_0)(u_0) & = \ell_1(\eps)\sum_{i=0}^{n-1} \eps^i\delta f^{(i+1)} + o(\eps^n\ell_1(\eps)), \label{eq:expand_f0}\\
            (J_\eps-J_0)(u_0) & = \ell_1(\eps)\sum_{i=0}^{n-1} \eps^i\delta J^{(i+1)} + o(\eps^n\ell_1(\eps)),\label{eq:expand_J0}
        \end{align}
            \item[(2)] There exist numbers $\delta A^{(i)}$ and $\delta F^{(i)}$, $i=1,2,\ldots, n$, such that
    \begin{align}
        (a_\eps-a_0)(u_0,q_\eps - q_0) & = \ell_1(\eps)\sum_{i=0}^{n-1} \eps^i\delta A^{(i+1)} + o(\eps^n\ell_1(\eps)), \label{eq:expand_a0_} \\
        (f_\eps-f_0)(q_\eps - q_0) & =  \ell_1(\eps)\sum_{i=0}^{n-1} \eps^i\delta F^{(i+1)} + o(\eps^n\ell_1(\eps)), \label{eq:expand_f0_}
    \end{align}
where $q_\eps\in \Cv$ is the averaged adjoint state satisfying
        \begin{equation}
            a_\eps(\varphi,v_\eps) = - \int_0^1\partial J_\eps(su_\eps + (1-s)u_0)(\varphi)\; ds \quad \text{ for all } \varphi\in \Cw.
        \end{equation}

\end{itemize}
Then the following expansion holds
\begin{equation}\label{eq:expansion_av_adjoint}
    J_\eps(u_\eps) = J_0(u_0)  + \ell_1(\eps)\sum_{i=0}^{n-1} \eps^i (\delta a^{(i+1)} - \delta f^{(i+1)} + \delta A^{(i+1)} - \delta F^{(i+1)}) + o(\eps^n\ell_1(\eps)).
\end{equation}
\end{proposition}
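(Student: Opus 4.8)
The plan is to run the telescoping argument behind Proposition~\ref{prop:main_av}, but expanded term by term in $\eps$ up to order $n-1$. Everything rests on the averaged adjoint identity recalled after \eqref{eq:aa_equation}: testing \eqref{eq:aa_equation} with $\varphi=u_\eps-u_0$ and applying the fundamental theorem of calculus (legitimate by the path-differentiability hypothesis imposed just before \eqref{eq:aa_equation}) gives $\Cl(\eps,u_\eps,q_\eps)=\Cl(\eps,u_0,q_\eps)$, while $\Cl(\eps,u_\eps,0)=J_\eps(u_\eps)=j(\eps)$ because $u_\eps$ solves \eqref{eq:abstract_state}; hence $j(\eps)=\Cl(\eps,u_0,q_\eps)$. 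The $\eps=0$ case of \eqref{eq:abstract_state} gives $a_0(u_0,\psi)=f_0(\psi)$ for all $\psi\in\Cw$, so, since $q_\eps\in\Cw$, also $\Cl(0,u_0,q_\eps)=J_0(u_0)=j(0)$.

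First I would reduce $j(\eps)-j(0)$ to the difference quantities of hypotheses (1)--(2). Combining the identities above and using only the definition \eqref{eq:definition_lagrangian} of $\Cl$ together with bi-/linearity of $a_\eps,a_0,f_\eps,f_0$,
\[
  j(\eps)-j(0)=\Cl(\eps,u_0,q_\eps)-\Cl(0,u_0,q_\eps)
  =(J_\eps-J_0)(u_0)+(a_\eps-a_0)(u_0,q_\eps)-(f_\eps-f_0)(q_\eps).
\]
Splitting the averaged adjoint as $q_\eps=q_0+(q_\eps-q_0)$ in the last two terms then gives
\[
  j(\eps)-j(0)=\big[(J_\eps-J_0)(u_0)+(a_\eps-a_0)(u_0,q_0)-(f_\eps-f_0)(q_0)\big]
  +\big[(a_\eps-a_0)(u_0,q_\eps-q_0)-(f_\eps-f_0)(q_\eps-q_0)\big],
\]
where the first bracket is controlled by the expansions of hypothesis (1), \eqref{eq:expand_a0}--\eqref{eq:expand_J0}, and the second by those of hypothesis (2), \eqref{eq:expand_a0_}--\eqref{eq:expand_f0_}.

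The remaining step is pure bookkeeping: insert those expansions, add the finitely many remainders (each $o(\eps^n\ell_1(\eps))$), and collect the coefficient of $\eps^i\ell_1(\eps)$ for $i=0,\dots,n-1$; this produces \eqref{eq:expansion_av_adjoint}. There is thus no genuine analytic difficulty in the abstract argument itself — one needs only that $u_\eps-u_0$ is an admissible test function in \eqref{eq:aa_equation}, that $q_\eps-q_0\in\Cw$ (both averaged adjoint states lie in $\Cw$) so the unperturbed state equation may be applied to it, and the elementary limit manipulations above. The real obstacle, and the reason the statement carries a whole polynomial $\sum_{i=0}^{n-1}\eps^i(\cdots)$ against the single scale $\ell_1(\eps)$, lies in \emph{verifying} hypotheses (1)--(2): this demands the asymptotics of $u_\eps$ and of the averaged adjoint $q_\eps$ both on the inclusion $\omega_\eps$ of size $\eps$ and on all of $\Dsf$, which is the content of Sections~\ref{sec:sens_an_elasticity} and~\ref{sec:sens_adjoint} and is most delicate in dimension two, where the fundamental solution of the elasticity system grows logarithmically.
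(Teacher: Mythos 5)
Your argument is essentially the paper's own proof: both rest on the identity $\Cl(\eps,u_\eps,0)=\Cl(\eps,u_0,q_\eps)$, the vanishing of $a_0(u_0,\cdot)-f_0(\cdot)$ on $\Cw$, and the splitting $q_\eps=q_0+(q_\eps-q_0)$, yielding the same five differences to which hypotheses (1)--(2) are applied. The only (harmless) deviation is that you write the third term as $(f_\eps-f_0)(q_0)$ where the statement and the paper's proof write $(f_\eps-f_0)(u_0)$; since $f_\eps$ is a linear form on $\Cw$ and arises from $-f_\eps(v)$ in the Lagrangian, your version is in fact the correct reading.
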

\begin{proof}
Similar to the proof of Proposition \ref{prop:main_av} we write
    \begin{equation}
        J_\eps(u_\eps) - J_0(u_0) = \Cl(\eps,u_0,q_\eps) - \Cl(\eps,u_0,q_0) + \Cl(\eps,u_0,q_0)-\Cl(0,u_0,q_0).
    \end{equation}
    The second term on the right hand side reads
    \begin{equation}
        \Cl(\eps,u_0,q_0)-\Cl(0,u_0,q_0) = (J_\eps-J_0)(u_0) +  (a_\eps - a_0)(u_0,q_0) - (f_\eps-f_0)(u_0).
    \end{equation}
    So using \eqref{eq:expand_a0}-\eqref{eq:expand_J0}, we can expand each difference in this expression. As 
    for the first difference on right hand side one has 
    \begin{align*}
        \Cl(\eps,u_0,q_\eps) - \Cl(\eps,u_0,q_0) = & (a_\eps-a_0)(u_0,q_\eps-q_0)-(f_\eps-f_0)(q_\eps-q_0) \\
                                               & + \underbrace{a_0(u_0,q_\eps-q_0) - f_0(q_\eps-q_0)}_{=0}.
    \end{align*}
    Therefore employing \eqref{eq:expand_a0_},\eqref{eq:expand_f0_} we can also expand these two differences and obtain the claimed formula \eqref{eq:expansion_av_adjoint}.
\end{proof}

\begin{remark}
    \begin{itemize}
\item[(i)]
    Checking the expansions \eqref{eq:deriv_av_12},\eqref{eq:deriv_av_22} in applications usually requires some regularity of the state $u_0$ and adjoint state $q_0=p_0$. However, the computation of this expansion is a straight forward application of Taylor's formula. The reader will find an application in Section~\ref{sec:deriv_av}
\item[(ii)]
    The computation of the asymptotic expansions \eqref{eq:deriv_av_11},\eqref{eq:deriv_av_21}
    requires the study of the asymptotic behavior of $q_\eps$ and therefore also of $u_\eps$. 
    This is the most difficult part and can be done by the compounded layer expansion involving 
    corrector equations (see for instance \cite{b_MANA_2000a},\cite{b_MANA_2000b}) as is presented in Section~\ref{sec:asymptotic_av}
\end{itemize}
\end{remark}


\subsection{Delfour's method}
In this section we discuss a method proposed by M.C. Delfour in \cite[Thm.3.3]{c_DE_2018b}. The definite advantage is that it uses the unperturbed adjoint 
equation and only requires the asymptotic analysis of the state equation, but it seems to come with the shortcoming that it is only applicable to certain cost functions; see \cite{a_GAST_2020a}. As before 
we let $\Cl$ be a Lagrangian function and denote by $u_\eps$ the perturbed state equation (solution to \eqref{eq:abstract_state} for $\eps \ge 0$) and $p_0$ the unperturbed adjoint equation (solution to \eqref{eq:adjoint_am} for $\eps=0$). Using the perturbed state and the unperturbed adjoint equation, Delfour proposed the following result for computing the first topological derivative, where we also incorparate the second order topological derivative.  

\begin{proposition}[\cite{c_DE_2018b}]\label{prop:main_df}
\begin{itemize}
 \item[(i)]   Let $\li:\VR^+\to\VR^+$ be a function with $\li\ge 0$ and $\displaystyle\lim_{\eps\searrow 0}\li(\eps)=0$. Furthermore, assume that the limits
\begin{align}
        \mRi_1(u_0, p_0) :=& \underset{\eps \searrow 0}{\mbox{lim}} \; \frac{1}{\li(\eps)}  \left[\Cl(\eps,u_\eps, p_0) -\Cl(\eps,u_0, p_0)- \Cl(\eps, u_0, p_0)(u_\eps - u_0)\right],\label{eq:deriv_df_11}\\
        \mRi_2(u_0, p_0) :=& \underset{\eps \searrow 0}{\mbox{lim}}\; \frac{1}{\li(\eps)}(\partial_u \Cl(\eps, u_0,p_0) - \partial_u \Cl(0,u_0,p_0))(u_\eps - u_0),\label{eq:deriv_df_12}\\
        \pimL(0, u_0, p_0) :=&  \lim_{\eps\searrow 0} \;\frac{1}{\li(\eps)}(\Cl(\eps,u_0, p_0) - \Cl(0,u_0, p_0)),\label{eq:deriv_df_13}
    \end{align}
    exist. Then the following expansion holds:
    \begin{align} \label{eq_dJ1_direct}
        j(\eps) = j(0) +  \li(\eps)((\mRi_1(u_0, p_0) + \mRi_2(u_0, p_0) + \pimL(0, u_0, p_0)) + o(\li(\eps)).
    \end{align}

\item[(ii)]  Let $\lii:\VR^+\to\VR^+$ be a function with $\lii\ge 0$ and $\displaystyle\lim_{\eps\searrow 0}\frac{\lii(\eps)}{\li(\eps)}=0$. Furthermore, assume that the assumptions under (i9 hold and that the limits
\begin{align}
    \mRii_1(u_0,p_0) &:= \underset{\eps \searrow 0}{\mbox{lim }} \frac{1}{\lii(\eps)}\bigg[\Cl(\eps,u_\eps,p_0) -\Cl(\eps,u_0,p_0)- \Cl(\eps, u_0,p_0)) (u_\eps - u_0) \nonumber \\
     & \hspace{7cm} - \li(\eps) \mRi_1(u_0, p_0)\bigg],\label{eq:deriv_df_21} \\
    \mRii_2(u_0,p_0) &:=\underset{\eps \searrow 0}{\mbox{lim }} \frac{1}{\lii(\eps)}\bigg[(\partial_u \Cl(\eps,u_0,p_0) - \partial_u \Cl(0,u_0,p_0))(u_\eps-u_0) \nonumber \\
                     & \hspace{6cm} - \li(\eps) \mRi_2(u_0, p_0) \bigg],\label{eq:deriv_df_22} \\
    \piimL(0,u_0, p_0) &:=\underset{\eps \searrow 0}{\mbox{lim }} \frac{1}{\lii(\eps)} \left[ \Cl(\eps,u_0,p_0) - \Cl(0,u_0,p_0) - \li(\eps) \pimL(0, u_0, p_0) \right],\label{eq:deriv_df_23}
\end{align}
exist. Then we have the following expansion: 
 \begin{equation}
     \begin{split}
     j(\eps) = j(0) & + \ell_1(\eps)(\mRi_1(u_0, p_0) + \mRi_2(u_0, p_0) + \pimL(0, u_0, p_0))  \\
                    & + \ell_2(\eps)(\mRii_1(u_0, p_0) +  \mRii_2(u_0,p_0)  + \piimL(0,u_0,p_0))  + o(\lii(\eps)).
 \end{split}
 \end{equation}
\end{itemize}
\end{proposition}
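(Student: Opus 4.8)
The plan is to mimic the two-term telescoping identity already used for Amstutz' method (Proposition~\ref{prop:main_am}) and the averaged adjoint method (Proposition~\ref{prop:main_av}), the only new feature being that here the adjoint variable $p_0$ is the \emph{unperturbed} one, so the quantity $\Cl(\eps,u_\eps,p_0)$ does not collapse as nicely and we must keep an extra Taylor correction term. First I would write, exactly as before,
\[
 j(\eps)-j(0) = \Cl(\eps,u_\eps,p_0) - \Cl(0,u_0,p_0),
\]
using $\Cl(\eps,u_\eps,p_0) = J_\eps(u_\eps) + a_\eps(u_\eps,p_0) - f_\eps(p_0)$ and the fact that $a_\eps(u_\eps,p_0)-f_\eps(p_0)$ equals $a_0(u_0,p_0)-f_0(p_0)$ only up to the perturbation terms — more carefully, $\Cl(\eps,u_\eps,p_0)=\Cl(\eps,u_\eps,0)+[a_\eps(u_\eps,p_0)-f_\eps(p_0)]$ and $\Cl(\eps,u_\eps,0)=j(\eps)$, while $\Cl(0,u_0,p_0)=j(0)$ since $u_0$ solves the unperturbed state equation tested against $p_0$. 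Then I would insert $\pm\Cl(\eps,u_0,p_0)$ and $\pm\partial_u\Cl(\eps,u_0,p_0)(u_\eps-u_0)$ to split the difference into the three groups appearing in \eqref{eq:deriv_df_11}--\eqref{eq:deriv_df_13}: a ``pure second-order remainder in $u$'' group $\Cl(\eps,u_\eps,p_0)-\Cl(\eps,u_0,p_0)-\partial_u\Cl(\eps,u_0,p_0)(u_\eps-u_0)$, a ``sensitivity-of-the-linearisation'' group $(\partial_u\Cl(\eps,u_0,p_0)-\partial_u\Cl(0,u_0,p_0))(u_\eps-u_0)$, and the ``explicit $\eps$-dependence'' group $\Cl(\eps,u_0,p_0)-\Cl(0,u_0,p_0)$. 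The remaining cross term $\partial_u\Cl(0,u_0,p_0)(u_\eps-u_0)$ vanishes: by definition of the unperturbed adjoint equation \eqref{eq:adjoint_am}, $\partial_u\Cl(0,u_0,p_0)(\varphi)=\partial J_0(u_0)(\varphi)+a_0(\varphi,p_0)=0$ for all $\varphi\in\Cv$, and $u_\eps-u_0\in\Cw\subset\Cv$ by the standing assumption. This is the one genuinely new algebraic point compared with the earlier propositions.

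With that identity in hand, part~(i) is immediate: divide by $\li(\eps)>0$, pass to the limit $\eps\searrow0$, and use the three assumed limits \eqref{eq:deriv_df_11}--\eqref{eq:deriv_df_13}, which gives \eqref{eq_dJ1_direct}. For part~(ii) I would subtract $\li(\eps)$ times the first-order expansion from $j(\eps)-j(0)$ — term by term, matching each of the three groups above with its corresponding first-order coefficient $\mRi_1$, $\mRi_2$, $\pimL$ — then divide by $\lii(\eps)$ and pass to the limit, invoking \eqref{eq:deriv_df_21}--\eqref{eq:deriv_df_23} together with $\lii(\eps)/\li(\eps)\to0$. Since the decomposition is additive and the three groups are exactly the ones whose first- and second-order limits are hypothesised to exist, no further estimates are needed; as in Proposition~\ref{prop:main_am}(ii) this half can reasonably be left to the reader or written in a couple of lines.

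The only subtlety — which I would flag explicitly rather than call an obstacle — is the bookkeeping around $\partial_u\Cl(\eps,u_0,p_0)(u_\eps-u_0)$: one must be careful that this expression is the Gâteaux derivative of $u\mapsto\Cl(\eps,u,p_0)$ at $u_0$ in direction $u_\eps-u_0$, which for our affine-in-$u$ Lagrangian $\Cl(\eps,u,v)=J_\eps(u)+a_\eps(u,v)-f_\eps(v)$ equals $\partial J_\eps(u_0)(u_\eps-u_0)+a_\eps(u_\eps-u_0,p_0)$, and that in \eqref{eq:deriv_df_11} and \eqref{eq:deriv_df_21} the notation $\Cl(\eps,u_0,p_0)(u_\eps-u_0)$ is shorthand for precisely this $\partial_u\Cl(\eps,u_0,p_0)(u_\eps-u_0)$. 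Once this identification is made, the proof is a purely formal telescoping argument identical in spirit to the proofs of Propositions~\ref{prop:main_am} and~\ref{prop:main_av}, and requires no new analytic input beyond the existence of the listed limits.
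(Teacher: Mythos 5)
Your proposal is correct and follows essentially the same route as the paper: write $j(\eps)-j(0)=\Cl(\eps,u_\eps,p_0)-\Cl(0,u_0,p_0)$ (using that the state equation kills $a_\eps(u_\eps,p_0)-f_\eps(p_0)$ for $p_0\in\Cw$), insert $\pm\Cl(\eps,u_0,p_0)$ and $\pm\partial_u\Cl(\eps,u_0,p_0)(u_\eps-u_0)$, observe that the leftover term $\partial_u\Cl(0,u_0,p_0)(u_\eps-u_0)$ vanishes by the unperturbed adjoint equation since $u_\eps-u_0\in\Cw$, and then divide by $\li(\eps)$ (resp.\ subtract the first-order part and divide by $\lii(\eps)$) and pass to the limit. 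Your reading of $\Cl(\eps,u_0,p_0)(u_\eps-u_0)$ as shorthand for $\partial_u\Cl(\eps,u_0,p_0)(u_\eps-u_0)$ matches the paper's intent.
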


\begin{proof}
ad (i): Firstly note that by definition the unperturbed adjoint state $p_0$ satisfies \[\partial_u\Cl(0,u_0,p_0)(\varphi)=0\quad \text{ for } \varphi \in \Cw.\] Thus, we can write $j(\eps)-j(0)$ in the following way:
\begin{align}
\begin{split}
j(\eps)-j(0)=&\Cl(\eps,u_\eps,0)-\Cl(0,u_0,0)\\
=&\Cl(\eps,u_\eps,p_0)-\Cl(0,u_0,p_0)\\
=&\Cl(\eps,u_\eps,p_0)-\Cl(\eps,u_0,p_0)-\partial_u\Cl(\eps,u_0,p_0)(u_\eps-u_0)\\
&+\partial_u\Cl(\eps,u_0,p_0)(u_\eps-u_0)-\partial_u\Cl(0,u_0,p_0)(u_\eps-u_0)\\
&+\Cl(\eps,u_0,p_0)-\Cl(0,u_0,p_0).
\end{split}
\end{align}
Now dividing by $\li(\eps)$, $\eps>0$ and passing to the limit $\eps\searrow 0$ yields the result.\newline
ad (ii): This can be shown similarly to (i).
\end{proof}

\begin{remark}
Similarly to Amstutz' method and the averaged adjoint method, Delfour's method requires the asymptotic behaviour of $u_\eps$ on the whole domain to compute \eqref{eq:deriv_df_11},\eqref{eq:deriv_df_21}. This may be challenging in the analysis in dimension two for some cost functionals. Additionally, \eqref{eq:deriv_df_12},\eqref{eq:deriv_df_22} can be checked by smoothness assumptions on $p_0$ and $u_0$ and the knowledge of the asymptotics of $u_\eps$ on a small subset of size $\eps$. The remaining terms \eqref{eq:deriv_df_13},\eqref{eq:deriv_df_23} usually are computed making use of Taylor's expansion of $u_0$ and $p_0$ respectively.
\end{remark}

\paragraph{Overview of the employed adjoint equations}
The methods reviewed in the previous sections make use of three different adjoint equations. The method of Amstutz \cite{a_AM_2006a} uses an adjoint equation which depends on the unperturbed state variable:
\[
p_\eps \in \Cw:\; \partial_u \Cl(\eps,u_0,p_\eps)(\varphi) =0 \quad \text{ for all } \varphi \in \Cv.
\]
Delfour's method uses the usual perturbed adjoint equation:
\[
 v_\eps  \in \Cw:\; \partial_u \Cl(\eps,u_\eps,v_\eps)(\varphi) =0 \quad \text{ for all } \varphi \in \Cv.
\]
Finally there is the averaged adjoint method, which employs the averaged adjoint equation \cite{a_ST_2015a} and \cite{c_DEST_2016a}:
\[
    q_\eps \in \Cw:\; \int_0^1 \partial_u \Cl(\eps,su_\eps + (1-s)u_0,q_\eps)(\varphi)\;ds =0 \quad \text{ for all } \varphi \in \Cv.
\]


\section{Analysis of the perturbed state equation}\label{sec:sens_an_elasticity}
Let $\Omega \subset D$ open, $\omega\subset \VR^d$ be a bounded domain containing the origin $0\in \omega$ and let $x_0 \in \Dsf$. Moreover, we define the perturbation $\omega_\eps:=x_0+ \eps\omega$ for $\eps \ge 0$ at $x_0$. Consider the perturbed state solution 
of \eqref{eq:elasticity_linear} for $\Omega = \omega_\eps$, that is, find $u_\eps\in H^1(\Dsf)^d$, such that $u_\eps|_\Gamma=u_D$ and
\begin{equation}\label{eq:elasticity_linear_per}
    \int_\Dsf \VC_{\omega_\eps} \epsb(u_\eps):\epsb(\varphi)\;dx = \int_\Dsf f_{\omega_\eps}\cdot \varphi\;dx +\int_{\Gamma^N}u_N\cdot\varphi \;dS\quad \text{ for all } \varphi\in H^1_\Gamma(\Dsf)^d.
\end{equation}
In the following sections we are going to derive the asymptotic expansion of $u_\eps$ using the compounded layer method; see  \cite{b_MANA_2000a},\cite{b_MANA_2000b}. We note that this expansion has already been computed in \cite{a_BOCO_2017a} by means of Green's function and earlier in \cite{a_AM_2002} for $f_{\Omega}=0$. In the following two sections we state some preliminary results regarding the scaling of inequalities and remainder estimates, which will be needed later on.

\subsection{Scaling of inequalities}

In this section, we discuss the influence of a parametrised affine transformation $\Phi_\eps:\VR^d\to\VR^d$ onto norms and the scaling behavior of some well-known inequalities with respect to that parameter.

\begin{definition}
For $\eps>0$ we define the inflation of $\Dsf$ by $\Dsf_\eps:=\Phi_\eps^{-1}(\Dsf)$, where the affine linear transformation $\Phi_\eps$ is given by $\Phi_\eps(x):=x_0+\eps x$, for a fixed point $x_0\in \Dsf$.
\end{definition}

For convenience, we denote the inflated boundary as $\Gamma_\eps:=\Phi_\eps^{-1}(\Gamma)$. Since $\Phi_\eps$ is a bi-Lipschitz continuous map, it holds $\varphi\in H^1_\Gamma(\Dsf)^d$ if and only if $\varphi\circ \Phi_\eps \in H^1_{\Gamma_\eps}(\Dsf_\eps)^d$; see \cite[p.52, Thm.2.2.2]{b_ZI_1989a}. Furthermore, we introduce the scaled $H^1$ norm on $\Dsf_\eps$ by
\begin{equation}
\|\varphi\|_\eps:=\eps\|\varphi\|_{L_2(\Dsf_\eps)^d}+\|\nabla \varphi\|_{L_2(\Dsf_\eps)^{d\times d}} \quad \text{ for all } \varphi\in H^1(\Dsf_\eps)^d.
\end{equation}

\begin{lemma}\label{lma:scaling1}
Let $\Dsf \subset \VR^d$ be a bounded Lipschitz domain and let $\eps>0$.
\begin{itemize}

\item[(a)] For $1\le p<\infty$ and $\varphi \in L_p(\Dsf_\eps)^d$ there holds
\begin{equation}
\eps^{\frac{d}{p}}\|\varphi\|_{L_p(\Dsf_\eps)^d}=\|\varphi\circ \Phi_\eps^{-1}\|_{L_p(\Dsf)^d}.
\end{equation}

\item[(b)] For $1\le p<\infty$ and $\varphi \in W^1_p(\Dsf_\eps)^d$ there holds
\begin{equation}
\eps^{\frac{d}{p}-1}\|\nabla \varphi\|_{L_p(\Dsf_\eps)^{d\times d}}=\|\nabla(\varphi\circ \Phi_\eps^{-1})\|_{L_p(\Dsf)^{d\times d}}.
\end{equation}

\item[(c)] For $\varphi \in H^1(\Dsf_\eps)^d$ there holds
\begin{equation}
\|\varphi\circ \Phi^{-1}_\eps\|_{H^1(\Dsf)^d} = \eps^{\frac{d}{2}-1}\|\varphi\|_{\eps}.
\end{equation}

\end{itemize}
\end{lemma}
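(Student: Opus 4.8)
The plan is to deduce all three identities from a single change of variables, proving (a) first and then bootstrapping (b) and (c) from it. For (a) I would set $\psi := \varphi\circ\Phi_\eps^{-1}$ and use that $\Phi_\eps^{-1}(y)=\eps^{-1}(y-x_0)$ is an affine bijection of $\Dsf$ onto $\Dsf_\eps$ with constant Jacobian $\det D\Phi_\eps^{-1}=\eps^{-d}$. Substituting $x=\Phi_\eps^{-1}(y)$, i.e. $y=x_0+\eps x$ and $\mathrm{d}y=\eps^{d}\,\mathrm{d}x$, gives
\[
\int_\Dsf |\psi(y)|^{p}\,\mathrm{d}y=\int_{\Dsf_\eps}|\varphi(x)|^{p}\,\eps^{d}\,\mathrm{d}x=\eps^{d}\,\|\varphi\|_{L_p(\Dsf_\eps)^d}^{p},
\]
and taking $p$-th roots yields the claim. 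I would point out that this computation is insensitive to the (finite) dimension of the target space, so it applies verbatim to matrix-valued integrands, a fact that is used in the next step.

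For (b) I would invoke the chain rule: since $D\Phi_\eps^{-1}\equiv\eps^{-1}I$, one has pointwise $\nabla(\varphi\circ\Phi_\eps^{-1})(y)=\eps^{-1}\,(\nabla\varphi)(\Phi_\eps^{-1}(y))$, that is $\nabla(\varphi\circ\Phi_\eps^{-1})=\eps^{-1}\big((\nabla\varphi)\circ\Phi_\eps^{-1}\big)$. Applying part (a) to the matrix field $\nabla\varphi\in L_p(\Dsf_\eps)^{d\times d}$ then gives
\[
\|\nabla(\varphi\circ\Phi_\eps^{-1})\|_{L_p(\Dsf)^{d\times d}}=\eps^{-1}\|(\nabla\varphi)\circ\Phi_\eps^{-1}\|_{L_p(\Dsf)^{d\times d}}=\eps^{-1}\eps^{d/p}\|\nabla\varphi\|_{L_p(\Dsf_\eps)^{d\times d}},
\]
which is the assertion; the equivalence $\varphi\in W^1_p(\Dsf_\eps)^d\Leftrightarrow\varphi\circ\Phi_\eps^{-1}\in W^1_p(\Dsf)^d$ that legitimises the manipulation follows from the same bi-Lipschitz change-of-variables fact recorded above.

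For (c) I would simply combine (a) and (b) at $p=2$: writing the $H^1$-norm additively, $\|\psi\|_{H^1(\Dsf)^d}=\|\psi\|_{L_2(\Dsf)^d}+\|\nabla\psi\|_{L_2(\Dsf)^{d\times d}}$, parts (a) and (b) give $\|\psi\|_{L_2(\Dsf)^d}=\eps^{d/2}\|\varphi\|_{L_2(\Dsf_\eps)^d}$ and $\|\nabla\psi\|_{L_2(\Dsf)^{d\times d}}=\eps^{d/2-1}\|\nabla\varphi\|_{L_2(\Dsf_\eps)^{d\times d}}$, and factoring $\eps^{d/2-1}$ out of the sum produces $\eps^{d/2-1}\big(\eps\|\varphi\|_{L_2(\Dsf_\eps)^d}+\|\nabla\varphi\|_{L_2(\Dsf_\eps)^{d\times d}}\big)=\eps^{d/2-1}\|\varphi\|_\eps$.

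I do not expect a genuine obstacle here. The only points that need care are the bookkeeping of the two competing powers of $\eps$, namely the factor $\eps^{d/p}$ coming from the Jacobian versus the factor $\eps^{-1}$ coming from the chain rule, and the convention, implicit in the definition of $\|\cdot\|_\eps$, that both $\|\cdot\|_{H^1}$ and $\|\cdot\|_\eps$ are taken as the \emph{sum} (rather than the Euclidean combination) of the $L_2$-norms of the function and of its gradient, which is precisely what makes (c) an identity rather than merely a two-sided estimate.
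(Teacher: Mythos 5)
Your proposal is correct and follows essentially the same route as the paper: change of variables with constant Jacobian $\eps^{\pm d}$ for (a), the chain-rule identity $\nabla(\varphi\circ\Phi_\eps^{-1})=\eps^{-1}(\nabla\varphi)\circ\Phi_\eps^{-1}$ combined with (a) for (b), and the combination of (a) and (b) at $p=2$ for (c). Your explicit remark that (c) is an exact identity only under the additive (sum-of-norms) convention for $\|\cdot\|_{H^1}$ and $\|\cdot\|_\eps$ is a point the paper glosses over, but it does not change the substance of the argument.
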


\begin{proof}
\begin{itemize}
\item[(a)] A change of variables yields
\begin{equation}
\|\varphi\|_{L_p(\Dsf_\eps)^d}^p=\eps^{-d}\int_\Dsf|\varphi\circ \Phi_\eps^{-1}|^p\; dx=\eps^{-d}\|\varphi\circ \Phi_\eps^{-1}\|_{L_p(\Dsf)^d}^p,
\end{equation}
where we used $|\Det(\nabla \Phi_\eps^{-1})|=\eps^{-d}$.

\item[(b)] Taking into account that $\nabla (\varphi\circ \Phi_\eps^{-1})=\eps^{-1}\nabla \varphi\circ \Phi_\eps^{-1}$, a change of variables yields
\begin{align}
\begin{split}
\|\nabla \varphi\|_{L_p(\Dsf_\eps)^{d\times d}}^p&=\eps^{-d}\int_\Dsf|\nabla \varphi\circ \Phi_\eps^{-1}|^p\; dx\\
&=\eps^{-d}\eps^p\int_\Dsf|\nabla(\varphi\circ \Phi_\eps^{-1})|^p\; dx=\eps^{p-d}\|\nabla(\varphi\circ \Phi_\eps^{-1})\|_{L_p(\Dsf)^{d\times d}}^p.
\end{split}
\end{align} 

\item[(c)] This follows from item (a) and (b).

\end{itemize}
\end{proof}

\begin{lemma}\label{lma:scaling2}
Let $\Dsf \subset \VR^d$ be a bounded Lipschitz domain, $\Gamma \subset \partial \Dsf$ and let $\eps >0$. Recall the definitions $\Dsf_\eps =\Phi_\eps^{-1}(\Dsf)$ and $\Gamma_\eps = \Phi_\eps^{-1}(\Gamma)$.
\begin{itemize}

    \item[(a)] For $1\le p\le q\le\infty$ there exists a constant $C>0$, such that
\begin{equation}
\|\varphi\|_{L_p(\Dsf_\eps)^d} \le C \eps^{\frac{d}{q}-\frac{d}{p}}\|\varphi\|_{L_q(\Dsf_\eps)^d}.
\end{equation}

\item[(b)] Let $d\ge3$ and $2^\ast$ denote the Sobolev conjugate of $2$. There exists a constant $C>0$, such that
\begin{equation}
\|\varphi\|_{L_{2^\ast}(\Dsf_\eps)^d}\le C\|\varphi\|_\eps.
\end{equation}

\item[(c)] Let $d=2$ and $\alpha>0$ small. There exists a constant $C>0$ and $\delta>0$ small, such that
\begin{equation}
\|\varphi\|_{L_{(2-\delta)^\ast}(\Dsf_\eps)^d}\le C \eps^{-\alpha}\|\varphi\|_\eps.
\end{equation}

\item[(d)] For $\varphi\in H^1(\Dsf_\eps)^d$ we have
\begin{equation}
\|\varphi\|_{L_2(\Gamma_\eps)^d}\le C \eps^{-\frac{1}{2}}\|\varphi\|_\eps.
\end{equation}

\item[(e)] Given a smooth connected domain $\Gamma \subset \partial \Dsf$, there is a continuous extension operator\newline $Z_{\Gamma_\eps}:H^{\frac{1}{2}}(\Gamma_\eps)^d\rightarrow H^1(\Dsf_\eps)^d$, such that
\begin{equation}
\|Z_{\Gamma_\eps}(\varphi)\|_\eps \le C (\eps^{\frac{1}{2}}\|\varphi\|_{L_2(\Gamma_\eps)^d}+|\varphi|_{H^{\frac{1}{2}}(\Gamma_\eps)^d}),\quad \text{ for all }\varphi \in H^{\frac{1}{2}}(\Gamma_\eps)^d,
\end{equation}
where $C>0$ is independent of $\eps$.

\item[(f)] Let $\Gamma \subset \partial \Dsf$ have positive measure. There exists a constant $C>0$, such that
\begin{equation}
\|\varphi\|_{L_2(\Dsf_\eps)^d}\le C\eps^{-1}\|\nabla \varphi\|_{L_2(\Dsf_\eps)^{d \times d}},\quad \text{ for all }\varphi\in H_{\Gamma_\eps}^1(\Dsf_\eps)^d.
\end{equation}

\end{itemize}
\end{lemma}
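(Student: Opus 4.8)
The plan is to prove each item by combining a change of variables via $\Phi_\eps$ with the corresponding classical inequality on the fixed domain $\Dsf$, and then tracking the powers of $\eps$ that Lemma~\ref{lma:scaling1} attaches to each norm. First, for item (a): since $p\le q$ and $\Dsf_\eps$ has finite measure, H\"older's inequality on $\Dsf_\eps$ gives $\|\varphi\|_{L_p(\Dsf_\eps)^d}\le |\Dsf_\eps|^{1/p-1/q}\|\varphi\|_{L_q(\Dsf_\eps)^d}$, and $|\Dsf_\eps| = \eps^{-d}|\Dsf|$ yields the factor $\eps^{d/q-d/p}$ (note $d/q-d/p\le 0$, so this is a genuine blow-up as $\eps\searrow 0$ unless $p=q$). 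The case $q=\infty$ is the obvious $\|\varphi\|_{L_p}\le|\Dsf_\eps|^{1/p}\|\varphi\|_{L_\infty}$.

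For items (b) and (c), I would start from the Sobolev embedding on the \emph{fixed} domain $\Dsf$ applied to $\psi:=\varphi\circ\Phi_\eps^{-1}$: in $d\ge 3$, $\|\psi\|_{L_{2^\ast}(\Dsf)^d}\le C\|\psi\|_{H^1(\Dsf)^d}$ with $C$ depending only on $\Dsf$. Now rewrite both sides in terms of $\varphi$ using Lemma~\ref{lma:scaling1}: the left side picks up $\eps^{d/2^\ast}$ by part (a) (with $p=2^\ast$), and the right side is $\eps^{d/2-1}\|\varphi\|_\eps$ by part (c) of Lemma~\ref{lma:scaling1}. Since $d/2^\ast = d/2-1$ exactly (because $1/2^\ast = 1/2 - 1/d$), the $\eps$-powers cancel and (b) follows with an $\eps$-independent constant. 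For (c) in $d=2$, the exponent $(2-\delta)^\ast$ is the Sobolev conjugate of $2-\delta$ in dimension $2$, i.e.\ $(2-\delta)^\ast = \frac{2(2-\delta)}{2-(2-\delta)} = \frac{2(2-\delta)}{\delta}$, which is finite; one uses $W^1_{2-\delta}(\Dsf)\hookrightarrow L_{(2-\delta)^\ast}(\Dsf)$ together with $\|\psi\|_{W^1_{2-\delta}(\Dsf)}\le C\|\psi\|_{H^1(\Dsf)}$ (H\"older, $\Dsf$ bounded). Running the same scaling bookkeeping now produces an imperfect cancellation: the left side carries $\eps^{2/(2-\delta)^\ast}$ while the control of the $W^1_{2-\delta}$-norm by the scaled norm $\|\varphi\|_\eps$ costs an extra negative power of $\eps$ coming from part (a) of Lemma~\ref{lma:scaling1} applied with $p=2-\delta<2$. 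The surviving exponent is a quantity that tends to $0$ as $\delta\searrow 0$, so choosing $\delta$ small enough relative to the given $\alpha$ yields the factor $\eps^{-\alpha}$. This calibration of $\delta$ against $\alpha$ is the one slightly delicate point and is the main obstacle.

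For (d), I would use the trace inequality on $\Dsf$: $\|\psi\|_{L_2(\Gamma)^d}\le C\|\psi\|_{H^1(\Dsf)^d}$. The boundary change of variables gives $\|\varphi\|_{L_2(\Gamma_\eps)^d} = \eps^{(d-1)/2\cdot(-1)+ \text{(Jacobian)}}\dots$; more carefully, $\mathrm{d}S$ on $\Gamma$ pulls back to $\eps^{d-1}\mathrm{d}S$ on $\Gamma_\eps$, so $\|\psi\|_{L_2(\Gamma)^d}^2 = \eps^{d-1}\|\varphi\|_{L_2(\Gamma_\eps)^d}^2$, i.e.\ $\|\varphi\|_{L_2(\Gamma_\eps)^d} = \eps^{-(d-1)/2}\|\psi\|_{L_2(\Gamma)^d} \le C\eps^{-(d-1)/2}\|\psi\|_{H^1(\Dsf)^d} = C\eps^{-(d-1)/2}\eps^{d/2-1}\|\varphi\|_\eps = C\eps^{-1/2}\|\varphi\|_\eps$, using part (c) of Lemma~\ref{lma:scaling1}; the powers combine to $-1/2$ independently of $d$, as claimed. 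Item (e) is the dual/inverse statement: take a fixed continuous extension operator $Z_\Gamma:H^{1/2}(\Gamma)^d\to H^1(\Dsf)^d$, set $Z_{\Gamma_\eps}(\varphi):=\big(Z_\Gamma(\varphi\circ\Phi_\eps^{-1})\big)\circ\Phi_\eps$, and scale: $\|Z_{\Gamma_\eps}(\varphi)\|_\eps = \eps^{1-d/2}\|Z_\Gamma(\varphi\circ\Phi_\eps^{-1})\|_{H^1(\Dsf)^d}\le C\eps^{1-d/2}\|\varphi\circ\Phi_\eps^{-1}\|_{H^{1/2}(\Gamma)^d}$, and then expand the $H^{1/2}(\Gamma)$ norm as the sum of an $L_2(\Gamma)$ part and the Slobodeckij seminorm, transporting each back to $\Gamma_\eps$; the $L_2$ part contributes $\eps^{(d-1)/2}\|\varphi\|_{L_2(\Gamma_\eps)^d}$ and, combined with the prefactor $\eps^{1-d/2}$, gives $\eps^{1/2}\|\varphi\|_{L_2(\Gamma_\eps)^d}$, while the seminorm, having a scale-invariant-type kernel $|x-y|^{-(d-1)-1}$ against $\mathrm{d}S(x)\,\mathrm{d}S(y)$, transports with exactly the matching power so that the $\eps^{1-d/2}$ prefactor is absorbed and one is left with $|\varphi|_{H^{1/2}(\Gamma_\eps)^d}$. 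Finally (f) is the scaled Poincar\'e (Friedrichs) inequality: on $\Dsf$ one has $\|\psi\|_{L_2(\Dsf)^d}\le C_{\Dsf}\|\nabla\psi\|_{L_2(\Dsf)^{d\times d}}$ for $\psi\in H^1_\Gamma(\Dsf)^d$ (valid since $\Gamma$ has positive measure), and applying parts (a),(b) of Lemma~\ref{lma:scaling1} with $p=2$ to rewrite both sides gives $\eps^{-d/2}\|\varphi\|_{L_2(\Dsf_\eps)^d}\le C_{\Dsf}\,\eps^{1-d/2}\|\nabla\varphi\|_{L_2(\Dsf_\eps)^{d\times d}}$, i.e.\ the claimed $\eps^{-1}$ factor; the subtlety here is only to note that $\varphi\in H^1_{\Gamma_\eps}(\Dsf_\eps)^d$ transports to $\psi\in H^1_\Gamma(\Dsf)^d$, which is the bi-Lipschitz invariance of the trace-zero condition already recalled before the lemma.
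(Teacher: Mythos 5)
Your proof is correct and follows essentially the same route as the paper's: transport to the fixed domain via $\Phi_\eps$, invoke the classical inequality there (H\"older, Sobolev/Gagliardo--Nirenberg, trace, extension, Friedrichs), and track the powers of $\eps$ with Lemma~\ref{lma:scaling1}; in particular the exact cancellation $d/2^\ast=d/2-1$ in (b) and the calibration of $\delta$ against $\alpha$ via the surviving exponent $-2/(2-\delta)^\ast\to 0$ in (c) are precisely the paper's argument. The only blemish is a sign slip in the displayed scaling identities in (f) --- Lemma~\ref{lma:scaling1} gives $\|\varphi\circ\Phi_\eps^{-1}\|_{L_2(\Dsf)}=\eps^{d/2}\|\varphi\|_{L_2(\Dsf_\eps)}$ and $\|\nabla(\varphi\circ\Phi_\eps^{-1})\|_{L_2(\Dsf)}=\eps^{d/2-1}\|\nabla\varphi\|_{L_2(\Dsf_\eps)}$, not the reciprocal powers you wrote --- but the conclusion you state, the factor $\eps^{-1}$, is the correct one.
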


\begin{proof}
\begin{itemize}

\item[(a)] This is a direct consequence of Lemma \ref{lma:scaling1} item (a).

\item[(b)] We use Lemma \ref{lma:scaling1} item (a) and (b), and apply the Gagliardo-Nirenberg inequality \cite[p. 265, Thm. 2]{b_EV_2010} to the bounded domain $\Dsf$.
\begin{align}
\begin{split}
\|\varphi\|_{L_{2^\ast}(\Dsf_\eps)^d}&=\eps^{-\frac{d}{2^\ast}}\|\varphi \circ \Phi^{-1}_\eps\|_{L_{2^\ast}(\Dsf)^d}\\
&\le C\eps^{-\frac{d}{2^\ast}}\|\varphi \circ \Phi^{-1}_\eps\|_{H^1(\Dsf)^d}\\
&=C\eps^{\frac{d}{2}-\frac{d}{2^\ast}-1}\|\varphi\|_\eps.
\end{split}
\end{align}
Now the result follows from $\frac{d}{2}-\frac{d}{2^\ast}=1$.

\item[(c)] This time we need to apply the Gagliardo-Nirenberg inequality with respect to $p:=2-\delta<2$ and use the continuous embedding $L_2(\Dsf) \hookrightarrow L_{2-\delta}(\Dsf)$ on the bounded domain $\Dsf$:
\begin{align}
\begin{split}
\|\varphi\|_{L_{(2-\delta)^\ast}(\Dsf_\eps)^d}&=\eps^{-\frac{2}{(2-\delta)^\ast}}\|\varphi \circ \Phi^{-1}_\eps\|_{L_{(2-\delta)^\ast}(\Dsf)^d}\\
&\le C\eps^{-\frac{2}{(2-\delta)^\ast}}(\|\varphi \circ \Phi^{-1}_\eps\|_{L_{(2-\delta)}(\Dsf)^d}+\|\nabla(\varphi \circ \Phi^{-1}_\eps)\|_{L_{(2-\delta)}(\Dsf)^{d \times d}})\\
&\le C\eps^{-\frac{2}{(2-\delta)^\ast}}(\|\varphi \circ \Phi^{-1}_\eps\|_{L_2(\Dsf)^d}+\|\nabla(\varphi \circ \Phi^{-1}_\eps)\|_{L_2(\Dsf)^{d \times d}})\\
&=C\eps^{-\frac{2}{(2-\delta)^\ast}}\|\varphi\|_\eps.
\end{split}
\end{align}
Since $(2-\delta)^\ast$ diverges to $\infty$ as $\delta \searrow 0$, the result follows.

\item[(d)] This follows from a change of variables and the continuity of the trace operator.

\item[(e)] From \cite[p. 129, Thm. 8.8]{b_WL_1987a} we know there exists a continuous extension operator\newline $Z_{\Gamma}:H^{\frac{1}{2}}(\Gamma)^d\rightarrow H^1(\Dsf)^d$. Thus, a scaling argument similar to the previous ones yields the result.

\item[(f)] Item  (a) and (b) of Lemma \ref{lma:scaling1} and an application of Friedrich's inequality yield the result.

\end{itemize}
\end{proof}

\subsection{Remainder estimates}

We begin this section with the following auxiliary result.
\begin{lemma}\label{lma:remainder_est}
Let $V:\VR^d\rightarrow\VR^d\in H^1_{loc}(\VR^d)^d$ satisfy
\begin{equation}
|V(x)|=c_1 |x|^{-m}+\mathcal{O}(|x|^{-m-1}),\quad |\nabla V(x)|=c_2 |x|^{-m-1}+\mathcal{O}(|x|^{-m-2}),
\end{equation}
for $x\in B_\delta(0)^\mathsf{c}$, where $\delta>0$ is fixed, $m\in \VR$ and $c_1,c_2>0$ are constants. Then there is a constant $C>0$, such that for $\Gamma\subset \partial \Dsf$ and $\eps>0$ sufficiently small the following estimates hold:

\begin{itemize}

\item[(i)] $\|V\|_{L_2(\Gamma_\eps)^d}\le C \eps^{\frac{2m+1-d}{2}}$.

\item[(ii)] $|V|_{H^{\frac{1}{2}}(\Gamma_\eps)^d}\le C \eps^{\frac{2m+2-d}{2}}$.

\item[(iii)] $\|\nabla V\|_{L_2(\Gamma_\eps)^{d\times d}}\le C \eps^{\frac{2m+3-d}{2}}$.

\end{itemize}

\end{lemma}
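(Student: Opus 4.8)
The plan is to reduce each of the three estimates to a computation on the rescaled boundary $\Gamma_\eps = \Phi_\eps^{-1}(\Gamma)$, where the asymptotic decay hypotheses on $V$ and $\nabla V$ translate into explicit powers of $\eps$, and then integrate over a surface of fixed size (roughly, a rescaled copy of $\Gamma$ at distance $\sim 1/\eps$ from the origin). First I would observe that since $\Gamma$ is a bounded piece of $\partial\Dsf$ and $x_0\in\Dsf$, there is $\rho>0$ with $|x-x_0|\ge\rho$ for all $x\in\Gamma$; hence every point $y\in\Gamma_\eps = \{\eps^{-1}(x-x_0): x\in\Gamma\}$ satisfies $|y|\ge\rho/\eps$, so for $\eps$ small enough $\Gamma_\eps\subset B_\delta(0)^{\mathsf c}$ and the pointwise bounds $|V(y)|\le C|y|^{-m}$, $|\nabla V(y)|\le C|y|^{-m-1}$ apply on all of $\Gamma_\eps$. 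Moreover $\Gamma$ is also bounded away from infinity, so $|y|\le C/\eps$ on $\Gamma_\eps$ as well; thus on $\Gamma_\eps$ one has $|V(y)|\sim \eps^{m}$ and $|\nabla V(y)|\sim \eps^{m+1}$ up to constants, uniformly.

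For (i), I would write $\|V\|_{L_2(\Gamma_\eps)^d}^2 = \int_{\Gamma_\eps}|V|^2\,dS_y$ and change variables $y = \eps^{-1}(x-x_0)$, i.e. $x = \Phi_\eps(y)$. The surface measure scales as $dS_y = \eps^{-(d-1)}dS_x$, so $\|V\|_{L_2(\Gamma_\eps)^d}^2 = \eps^{-(d-1)}\int_{\Gamma}|V(\Phi_\eps^{-1}(x))|^2\,dS_x$. Using $|V(\Phi_\eps^{-1}(x))|\le C|\Phi_\eps^{-1}(x)|^{-m}= C\eps^{m}|x-x_0|^{-m}\le C\eps^m$ (since $|x-x_0|\ge\rho$ on $\Gamma$) and that $\Gamma$ has finite surface measure, we get $\|V\|_{L_2(\Gamma_\eps)^d}^2\le C\eps^{-(d-1)}\eps^{2m} = C\eps^{2m+1-d}$, which is (i). Statement (iii) is identical with $V$ replaced by $\nabla V$ and $m$ replaced by $m+1$, giving the exponent $\frac{2(m+1)+1-d}{2}=\frac{2m+3-d}{2}$.

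For (ii), I would expand the Slobodeckij seminorm directly: $|V|_{H^{1/2}(\Gamma_\eps)^d}^2 = \int_{\Gamma_\eps}\int_{\Gamma_\eps}\frac{|V(y)-V(y')|^2}{|y-y'|^{d}}\,dS_y\,dS_{y'}$ (the exponent being $d=(d-1)+1$ on a $(d-1)$-dimensional surface, matching the convention in the excerpt), change variables on both copies so that $dS_y\,dS_{y'} = \eps^{-2(d-1)}dS_x\,dS_{x'}$ and $|y-y'|^{d} = \eps^{-d}|x-x'|^{d}$, producing an overall factor $\eps^{-2(d-1)+d} = \eps^{2-d}$. Then I would bound the numerator $|V(\Phi_\eps^{-1}x)-V(\Phi_\eps^{-1}x')|$ using the mean value theorem along the segment from $\Phi_\eps^{-1}x$ to $\Phi_\eps^{-1}x'$, on which $|\nabla V|\le C\eps^{m+1}|\,\cdot\,|^{-m-1}$; since all points on that segment have modulus $\gtrsim \rho/\eps$, this gives $|V(\Phi_\eps^{-1}x)-V(\Phi_\eps^{-1}x')|\le C\eps^{m+1}|\Phi_\eps^{-1}x - \Phi_\eps^{-1}x'| = C\eps^{m}|x-x'|$, whence the double integral is $\le C\eps^{2-d}\cdot\eps^{2m}\int_\Gamma\int_\Gamma |x-x'|^{2-d}\,dS_x\,dS_{x'} = C\eps^{2m+2-d}$, using that $|x-x'|^{2-d}$ is integrable over the fixed smooth compact surface $\Gamma\times\Gamma$. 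Taking square roots gives (ii).

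The only genuinely delicate point is the convexity/connectedness used in the mean value estimate for (ii): the straight segment joining two points of $\Phi_\eps^{-1}(\Gamma)$ need not lie in the region where the pointwise bound on $\nabla V$ is assumed. However, since both endpoints lie outside $B_{\rho/\eps}(0)$ and $\rho/\eps$ is large, one can instead join them by a path staying in $B_\delta(0)^{\mathsf c}$ of length comparable to $|x-x'|/\eps$ (e.g. going radially outward, along a great circle, then radially back, or simply noting $\diam(\Phi_\eps^{-1}\Gamma)\le C/\eps$ while $\dist(\Phi_\eps^{-1}\Gamma, 0)\ge\rho/\eps$, so a bounded-distortion detour exists); alternatively one can avoid the MVT entirely and split $|V(y)-V(y')|\le |V(y)|+|V(y')|\le C\eps^m$ when $|x-x'|$ is not small and use the gradient bound only locally. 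Either way the exponent is unchanged, so I would simply remark that $\Gamma$ smooth and $x_0$ interior make all such path arguments routine, and record the stated bounds.
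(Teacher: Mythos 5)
Your proposal is correct and follows essentially the same route as the paper: rescale the surface integrals (picking up $\eps^{-(d-1)}$ per copy of $\Gamma$ and $\eps^{d}$ from the kernel), use the pointwise decay $|V|\lesssim \eps^{m}$, $|\nabla V|\lesssim\eps^{m+1}$ on $\Gamma_\eps$, and for (ii) control increments by a mean-value/path argument plus the integrability of $|x-x'|^{2-d}$ on $\Gamma\times\Gamma$. The delicate point you flag is exactly the one the paper addresses, by splitting into near pairs (straight segments, which stay in the good region for $|x-x'|<\delta$) and far pairs (paths in an annulus $S\supset\partial\Dsf$ with geodesic distance comparable to $|x-x'|$); your simpler alternative of bounding far pairs by $|V(y)|+|V(y')|$ also yields the same exponent.
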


\begin{proof}

\begin{itemize}

\item[(i)] Let $\displaystyle M:=\min_{x\in \Gamma}|x-x_0|>0$ and $\eps$ sufficiently small, such that the leading term of $V$ dominates the remainder for $x\in \Gamma_\eps$. Then we conclude
\begin{equation}
\|V\|^2_{L_2(\Gamma_\eps)^d}=\int_{\Gamma_\eps}|V|^2\;dS\le |\Gamma_\eps|(\eps^{-1}M)^{-2m}\le C\eps^{1-d+2m}.
\end{equation}
Now taking the square root shows the result.

\item[(ii)] Let $0<r_1<r_2$ such that $\partial D\subset S$, where $S:=B_{r_2}(x_0)\setminus B_{r_1}(x_0)$. Additionally, let $\eps$ sufficiently small, such that $\rho<\eps^{-1}r_1$. Now we apply a change of variables to integrate over the fixed domain and split the norm into two terms, which are treated separately. Therefore fix some $\delta>0$ sufficiently small. Then
\begin{align}
    |V|^2_{H^{\frac{1}{2}}(\partial \Dsf_\eps)^d}=&\int_{\partial \Dsf_\eps}\int_{\partial \Dsf_\eps}\frac{|V(x)-V(y)|^2}{|x-y|^d}\;dS_y dS_x \nonumber \\
=&\eps^{2-2d}\int_{\partial D}\int_{\partial D}\frac{|V(\Phi_\eps^{-1}(x))-V(\Phi_\eps^{-1}(y))|^2}{|\Phi_\eps^{-1}(x)-\Phi_\eps^{-1}(y)|^d}\;dS_y dS_x \nonumber\\ \nonumber
=&\eps^{2-d}\int_{\partial D}\int_{\partial D}\frac{|V(\Phi_\eps^{-1}(x))-V(\Phi_\eps^{-1}(y))|^2}{|x-y|^d}\;dS_y dS_x\\
=&\eps^{2-d}\int_{\partial D}\int_{\partial D\setminus B_\delta(x)}\frac{|V(\Phi_\eps^{-1}(x))-V(\Phi_\eps^{-1}(y))|^2}{|x-y|^d}\;dS_y dS_x\label{eq:nU_main_1}\\
&+\eps^{2-d}\int_{\partial D}\int_{\partial D\cap B_\delta(x)}\frac{|V(\Phi_\eps^{-1}(x))-V(\Phi_\eps^{-1}(y))|^2}{|x-y|^d}\;dS_y dS_x.\label{eq:nU_main_2}
\end{align}
In order to compute the first term \eqref{eq:nU_main_1}, we consider for each pair $(x,y)\in\partial \Dsf \times\partial \Dsf$ a smooth path $\varphi_{x,y}: [0,1]\rightarrow S$ satisfying $\varphi_{x,y}(0)=x$ and $\varphi_{x,y}(1)=y$. Since $V$ is smooth in $\Phi_\eps^{-1}(S)$, we can apply the  mean value theorem to the function $F(t):=V(\Phi_\eps^{-1}(\varphi_{x,y}(t)))$ and consider $\nabla(\Phi_\eps^{-1})=\eps^{-1}I_d$ to get
\begin{equation}
V(\Phi_\eps^{-1}(y))-V(\Phi_\eps^{-1}(x))=\int_0^1\eps^{-1}\nabla V(\Phi_\eps^{-1}(\varphi_{x,y}(s)))\varphi_{x,y}^\prime(s) \;ds.
\end{equation}
Thus, by Hölder's inequality we conclude
\begin{equation}
|V(\Phi_\eps^{-1}(y))-V(\Phi_\eps^{-1}(x))|\le \eps^{-1}\|\nabla V(\Phi_\eps^{-1}(\varphi_{x,y}(\cdot)))\|_{L^\infty(0,1)^{d \times d}}\|\varphi_{x,y}^\prime\|_{L^1(0,1)^d}.
\end{equation}
Since this inequality holds for every smooth path $\varphi_{x,y}$ connecting $x$ and $y$, the estimate holds for $d_{S}(x,y):=\displaystyle\inf_{\varphi_{x,y}[0,1]\rightarrow S}\|\varphi_{x,y}^\prime\|_{L^1(0,1)^d}$. Furthermore, since $S$ is bounded and path connected, the following estimate holds (see \cite[Thm 5.8]{b_DEZO_2011a}).  
\begin{equation}
d_{S}(x,y)\le C|x-y|,\quad \text{ for }x,y \in \bar{S}
\end{equation}
for some constant $C>0$ that only depends on $S$.
Additionally, considering the representation formula of $V$, we have $\|\nabla V (x)\|= c_2 |x|^{-m-1}+\mathcal{O}(|x|^{-m-2})$. Hence, choosing $\eps$ small enough, such that the leading order term dominates the remainder, we get
\begin{equation}\label{eq:nU_est}
\|\nabla \Ui(\Phi_\eps^{-1}(\varphi_{x,y}(s)))\|_{L^\infty(0,1)^{d \times d}}\le \max_{z\in \bar{S}}|\nabla \Ui(\Phi_\eps^{-1}(z))|\le C\eps^{m+1}.
\end{equation}
As a result, we conclude
\begin{align}\label{eq:H12_dist}
\begin{split}
&\eps^{2-d}\int_{\partial \Dsf}\int_{\partial \Dsf\setminus B_\delta(x)}\frac{|V(\Phi_\eps^{-1}(y))-V(\Phi_\eps^{-1}(x))|^2}{|x-y|^d}\;dS_y dS_x\\&\le\eps^{-d}\int_{\partial \Dsf}\int_{\partial \Dsf\setminus B_\delta(x)}\frac{C\eps^{2m+2}|x-y|^2}{|x-y|^d}\;dS_y dS_x\\
&\le\eps^{-d}\int_{\partial \Dsf}\int_{\partial \Dsf\setminus B_\delta(x)}\frac{C\eps^{2m+2}}{\delta^{d-2}}\;dS_y dS_x\\
&\le C\eps^{2m+2-d}.
\end{split}
\end{align}
The key here was to choose the set $S$ such that $\Phi_\eps^{-1}\circ\varphi_{x,y}([0,1])\subset B_\rho(0)^\mathsf{c}$ for every path $\varphi_{x,y}$.\newline
The second term \eqref{eq:nU_main_2} can be estimated by using a straight line connecting $x\in \partial \Dsf$ and $y\in \partial \Dsf$. Therefore, let $\varphi_{x,y}(t):=x+t(y-x)$, for $t\in[0,1]$. Since this time we only need to consider $(x,y)\in \partial \Dsf \times \partial \Dsf$ such that $|x-y|<\delta$, $\Phi_\eps^{-1}\circ\varphi_{x,y}([0,1])\subset B_\rho(0)^\mathsf{c}$ can be guaranteed by choosing $\delta$ sufficiently small. Again, an application of the mean value theorem yields
\begin{equation}
|V(\Phi_\eps^{-1}(x))-V(\Phi_\eps^{-1}(y))|^2\le \eps^{-2}\max_{z\in\bar{S_\delta}}|\nabla V(\Phi_\eps^{-1}(z))|^2|x-y|^2,
\end{equation}
where $S_\delta:=\bigcup_{x\in\partial \Dsf}B_\delta(x)$. Furthermore, a similiar estimation to \eqref{eq:nU_est} yields
\begin{equation}
\max_{z\in\bar{S_\delta}}|\nabla V(\Phi_\eps^{-1}(z))|^2\le C\eps^{2m+2}.
\end{equation}
Plugging this estimate into \eqref{eq:nU_main_2}, yields
\begin{align}
\begin{split}\label{eq:H12_close}
&\eps^{2-d}\int_{\partial \Dsf}\int_{\partial  \Dsf\cap B_\delta(x)}\frac{|V(\Phi_\eps^{-1}(x))-V(\Phi_\eps^{-1}(y))|^2}{|x-y|^d}\;dS_y dS_x\\&\le\eps^{-d}\int_{\partial \Dsf}\int_{\partial \Dsf\cap B_\delta(x)}\frac{\displaystyle\max_{z\in\bar{S_\delta}}|\nabla V(\Phi_\eps^{-1}(z))|^2}{|x-y|^{d-2}}\;dS_y dS_x\\
&\le C\eps^{2m+2-d}\int_{\partial \Dsf}\int_{\partial \Dsf\cap B_\delta(x)}\frac{1}{|x-y|^{d-2}}\;dy dx.
\end{split}
\end{align}
To finish our proof, we need to show that the integral on the right hand side is finite. Therefore, let $A_j(x):=B_{2^{(1-j)}\delta}(x)\setminus B_{2^{-j}\delta}(x)$, for $j \in \VN$. Hence, \[B_\delta(x)=\displaystyle\bigcup_{j\ge 1}A_j(x).\]
Now we can split the inner integral into layers according to these sets:
\begin{align}
\begin{split}
\int_{\partial \Dsf\cap B_\delta(x)}\frac{1}{|x-y|^{d-2}}\;dy&=\sum_{j\ge 1}\int_{\partial \Dsf\cap A_j(x)}\frac{1}{|x-y|^{d-2}}\;dy\\
&\le\sum_{j\ge 1}\int_{\partial \Dsf\cap A_j(x)}\frac{1}{[2^{-j}\delta]^{d-2}}\; dy\\
&\le\sum_{j\ge 1}2^{jd-2j}\delta^{2-d}|A_j(x)|\\
&=\delta^{2-d}\sum_{j\ge 1}2^{jd-2j}[C(2^{(1-j)d}\delta^d-2^{-jd}\delta^d)]\\
&=\delta^{2}C\sum_{j\ge 1}2^{jd-2j-jd}[2-1] =C\sum_{i\ge 1}\left(\frac{1}{4}\right)^j<\infty.
\end{split}
\end{align}
Hence, combining \eqref{eq:H12_dist} and \eqref{eq:H12_close} and using $|V|^2_{H^{\frac{1}{2}}(A)^d}\le|V|^2_{H^{\frac{1}{2}}(B)^d}$ for $A\subset B$, the result follows.

\item[(iii)] The proof follows the lines of item (i) and is therefore left to the reader.

\end{itemize}

\end{proof}


\subsection{First order asymptotic expansion}

Let $u_0\in H^1(\Dsf)^d$ denote the unique solution of the state equation \eqref{eq:elasticity_linear} for $\eps=0$. We henceforth refer to $u_0$ as the unperturbed state variable. By definition $u_0$ satisfies $u_0|_\Gamma=u_D$ and
\begin{equation}\label{eq:elasticity_linear_unper}
\int_\Dsf \VC_2 \epsb(u_0):\epsb(\varphi)\;dx = \int_\Dsf f_2 \cdot\varphi\;dx+\int_{\Gamma^N}u_N\cdot\varphi\;dS \quad \text{ for all } \varphi\in H^1_{\Gamma}(\Dsf)^d.
\end{equation}
\begin{assumption}\label{ass:regularity}
We henceforth assume that the $u_0\in C^3(B_\delta(x_0))$ for a small radius $\delta>0$.
\end{assumption}
\begin{lemma}
There is a constant $C>0$, such that for all $\eps >0$ sufficiently small there holds
\begin{equation}
\|u_\eps-u_0\|_{H^1(\Dsf)^d}\le C\eps^{\frac{d}{2}}.
\end{equation}
\end{lemma}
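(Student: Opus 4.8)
The plan is a standard energy estimate for the difference $r_\eps := u_\eps - u_0$. Since $u_\eps|_\Gamma = u_0|_\Gamma = u_D$, we have $r_\eps \in H^1_\Gamma(\Dsf)^d$, so $r_\eps$ is an admissible test function in both \eqref{eq:elasticity_linear_per} and \eqref{eq:elasticity_linear_unper}. I would subtract the two weak formulations, use $\VC_{\omega_\eps} - \VC_2 = (\VC_1 - \VC_2)\chi_{\omega_\eps}$ and $f_{\omega_\eps} - f_2 = (f_1 - f_2)\chi_{\omega_\eps}$, split $\epsb(u_\eps) = \epsb(r_\eps) + \epsb(u_0)$, and then test against $\varphi = r_\eps$ to arrive at the identity
\[
\int_\Dsf \VC_{\omega_\eps}\epsb(r_\eps):\epsb(r_\eps)\,dx = -\int_{\omega_\eps}(\VC_1 - \VC_2)\epsb(u_0):\epsb(r_\eps)\,dx + \int_{\omega_\eps}(f_1 - f_2)\cdot r_\eps\,dx.
\]

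For the left-hand side I would invoke the uniform ellipticity of $\VC_{\omega_\eps}$ — this is needed for well-posedness of \eqref{eq:elasticity_linear_per} and holds with an $\eps$-independent constant because $\VC_{\omega_\eps}$ takes only the two fixed values $\VC_1$ and $\VC_2$ — together with Korn's inequality on $H^1_\Gamma(\Dsf)^d$, which yields $\int_\Dsf \VC_{\omega_\eps}\epsb(r_\eps):\epsb(r_\eps)\,dx \ge \alpha\,\|r_\eps\|_{H^1(\Dsf)^d}^2$ for some $\alpha > 0$ independent of $\eps$. For the right-hand side, note that for $\eps$ small enough $\omega_\eps = x_0 + \eps\omega \subset B_\delta(x_0)$, so Assumption~\ref{ass:regularity} bounds $\|\nabla u_0\|_{L^\infty(\omega_\eps)}$ and the hypothesis $f_1, f_2 \in C^2(B_\delta(x_0))^d$ bounds $\|f_1 - f_2\|_{L^\infty(\omega_\eps)}$; since $|\omega_\eps| = \eps^d |\omega|$, the Cauchy–Schwarz inequality bounds both terms on the right by $C\eps^{d/2}\,\|r_\eps\|_{H^1(\Dsf)^d}$, where for the force term one also uses $\|r_\eps\|_{L_2(\omega_\eps)} \le \|r_\eps\|_{L_2(\Dsf)^d} \le \|r_\eps\|_{H^1(\Dsf)^d}$. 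Combining the two bounds and dividing by $\|r_\eps\|_{H^1(\Dsf)^d}$ (harmless, since the estimate is trivial if $r_\eps = 0$) gives the asserted inequality.

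The argument is routine; the only points that deserve a word of care are the $\eps$-uniformity of the coercivity constant, which is immediate from the piecewise-constant, two-valued structure of $\VC_{\omega_\eps}$, and the fact that the inclusion $\omega_\eps$ lies inside the neighbourhood $B_\delta(x_0)$ where $u_0$, $f_1$, $f_2$ are regular, which holds for all sufficiently small $\eps$.
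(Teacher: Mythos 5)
Your argument is correct and is essentially the paper's own proof: subtract the two weak formulations, test with $r_\eps=u_\eps-u_0\in H^1_\Gamma(\Dsf)^d$, use uniform ellipticity together with Korn's (and Friedrich's) inequality for coercivity with an $\eps$-independent constant, and bound the right-hand side by $C\sqrt{|\omega_\eps|}\,\|r_\eps\|_{H^1(\Dsf)^d}$ using the local regularity of $u_0$, $f_1$, $f_2$ on $B_\delta(x_0)\supset\omega_\eps$ and $|\omega_\eps|=\eps^d|\omega|$. The two points you flag for care (uniformity of the coercivity constant and $\omega_\eps\subset B_\delta(x_0)$ for small $\eps$) are exactly the right ones.
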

\begin{proof}
Subtracting \eqref{eq:elasticity_linear_per} for $\eps >0$ and \eqref{eq:elasticity_linear_unper} yields
\begin{align}\label{eq:diff_fixed}
\begin{split}
\int_\Dsf \VC_{\omega_\eps} \epsb(u_\eps-u_0):\epsb(\varphi)\;dx =& \int_{\omega_\eps} (\VC_2-\VC_1)\epsb(u_0):\epsb(\varphi)\;dx\\
&+\int_{\omega_\eps} (f_1-f_2) \cdot \varphi\;dx \quad \text{ for all } \varphi\in H^1_{\Gamma}(\Dsf)^d.
\end{split}
\end{align}
Therefore, testing with $\varphi:=u_\eps-u_0\in H^1_{\Gamma}(\Dsf)^d$, applying Korn's inequality to the gradient term on the left hand side followed by Friedrich's inequality and using H\"older's inequality to estimate the right hand side, leads to
\begin{equation}\label{eq:proof_exp1}
\|u_\eps-u_0\|_{H^1(\Dsf)^d}^2\le C(\|(\VC_2-\VC_1)\epsb(u_0)\|_{L_2(\omega_\eps)^{d\times d}}+\|f_1-f_2\|_{L_2(\omega_\eps)^d})\|u_\eps-u_0\|_{H^1(\Dsf)^d},
\end{equation}
for a positive constant $C>0$. In view of Assumption~\ref{ass:regularity}, we have $u_0\in C^3(B_\delta(x_0))$ for $\delta>0$ small enough and thus \eqref{eq:proof_exp1} can be further estimated to obtain
\begin{equation}
\|u_\eps-u_0\|_{H^1(\Dsf)^d}\le C\sqrt{\omega_\eps}(\|(\VC_2-\VC_1)\epsb(u_0)\|_{C(\omega_\eps)^{d\times d}}+\|f_1-f_2\|_{C(\omega_\eps)^d}).
\end{equation}
Now the result follows from $\sqrt{|\omega_\eps|}=\sqrt{|\omega|}\eps^{\frac{d}{2}}$.
\end{proof}

\begin{definition}\label{def:variation_state}
        For almost every $x\in \Dsf$ we define the first variation of the state $u_\eps$ by
    \begin{equation}
    \Ui_\eps(x) := \left(\frac{u_\eps - u_0}{\eps}\right)\circ \Phi_\eps(x), \quad \eps >0.
    \end{equation}
    The second variation of $u_\eps$ is defined by
    \begin{equation}
        U^{(2)}_\eps(x) := \frac{U^{(1)}_\eps(x) - U^{(1)}(x) - \eps^{d-1}u^{(1)}\circ \Phi_\eps}{\eps}, \quad \eps >0.
    \end{equation}
    More generally we define the $i$-th variation of $u_\eps$ for $i\ge 2$ by
    \begin{equation}
        U^{(i+1)}_\eps(x) := \frac{U^{(i)}_\eps(x) - U^{(i)}(x) - \eps^{d-2}u^{(i)}\circ \Phi_\eps}{\eps}, \quad \eps >0.
    \end{equation}
    Here, $U^{(i)}:\VR^d\to \VR^d$ are so-called boundary layer correctors and $u^{(i)}:\Dsf\to \VR^d$ are
    regular correctors. The functions $U^{(i)}$ aim to approximate $U_\eps^{(i)}$, however, they
    introduce an error at the boundary of $\Dsf$, which is corrected with the help of $u^{(i)}$.
\end{definition}

By extending $u_\eps$ and $u_0$ outside of $\Dsf$ by a continuous extension operator $E:H^1(\Dsf)^d\rightarrow H^1(\VR^d)^d$, one can view $U^{(1)}_\eps$ as an element of the Beppo-Levi space $BL(\VR^d)^d$.\newline
In the following, we show that the first variation of the state converges to a function $U\in BL(\VR^d)^d$ and determine an equation satisfied by this limit. The next Lemma helps us to handle the inhomogeneous Dirichlet boundary condition on $\Gamma_\eps$.

\begin{lemma}\label{lma:aux}
Let $A:\VR^{d\times d}\rightarrow \VR^{d\times d}$ be uniformly positive definite, $F_\eps:H^1_{\Gamma_\eps}(\Dsf_\eps)^d\rightarrow \VR$ be a linear and continuous functional with respect to $\|\cdot\|_\eps$ and $g_\eps\in H^{\frac{1}{2}}(\Gamma_\eps)^d$. Then there exists a unique $V_\eps \in H^1(\Dsf_\eps)^d$, such that
\begin{equation}\label{eq:aux_vol}
\int_{\Dsf_\eps}A \epsb(V_\eps):\epsb(\varphi)\; dx=F_\eps(\varphi)\quad\text{ for all }\varphi\in H^1_{\Gamma_\eps}(\Dsf_\eps)^d,
\end{equation}
\begin{equation}\label{eq:aux_bd}
V_\eps|_{\Gamma_\eps}=g_\eps.
\end{equation}
Furthermore, there exists a constant $C>0$ such that
\begin{equation}\label{eq:aux_ineq}
\|V_\eps\|_\eps\le C(\|F_\eps\|+\eps^{\frac{1}{2}}\|g_\eps\|_{L_2(\Gamma_\eps)^d}+|g_\eps|_{H^{\frac{1}{2}}(\Gamma_\eps)^d}).
\end{equation}
\end{lemma}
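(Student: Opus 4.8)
The plan is to reduce the inhomogeneous Dirichlet problem \eqref{eq:aux_vol}--\eqref{eq:aux_bd} to a homogeneous one via the scaled extension operator from Lemma~\ref{lma:scaling2}(e), then apply Lax--Milgram on the inflated domain $\Dsf_\eps$ with the $\eps$-scaled norm $\|\cdot\|_\eps$, tracking all constants carefully so that they are independent of $\eps$. First I would invoke Lemma~\ref{lma:scaling2}(e) to obtain the extension $w_\eps := Z_{\Gamma_\eps}(g_\eps) \in H^1(\Dsf_\eps)^d$ with $w_\eps|_{\Gamma_\eps} = g_\eps$ and $\|w_\eps\|_\eps \le C(\eps^{1/2}\|g_\eps\|_{L_2(\Gamma_\eps)^d} + |g_\eps|_{H^{1/2}(\Gamma_\eps)^d})$, the constant $C$ being $\eps$-independent. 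Writing $V_\eps = w_\eps + \tilde V_\eps$, the problem for $\tilde V_\eps \in H^1_{\Gamma_\eps}(\Dsf_\eps)^d$ becomes
\[
\int_{\Dsf_\eps} A\,\epsb(\tilde V_\eps):\epsb(\varphi)\;dx = F_\eps(\varphi) - \int_{\Dsf_\eps} A\,\epsb(w_\eps):\epsb(\varphi)\;dx =: \tilde F_\eps(\varphi) \quad \text{for all } \varphi \in H^1_{\Gamma_\eps}(\Dsf_\eps)^d.
\]
Here $\tilde F_\eps$ is linear and continuous on $H^1_{\Gamma_\eps}(\Dsf_\eps)^d$ with respect to $\|\cdot\|_\eps$, with $\|\tilde F_\eps\| \le \|F_\eps\| + \|A\|\,\|w_\eps\|_\eps$.

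The core step is the application of Lax--Milgram to the bilinear form $b(\psi,\varphi) := \int_{\Dsf_\eps} A\,\epsb(\psi):\epsb(\varphi)\;dx$ on $H^1_{\Gamma_\eps}(\Dsf_\eps)^d$. Boundedness is immediate from $|A\,\epsb(\psi):\epsb(\varphi)| \le \|A\|\,|\nabla\psi|\,|\nabla\varphi|$ and Cauchy--Schwarz, giving $|b(\psi,\varphi)| \le \|A\|\,\|\nabla\psi\|_{L_2}\|\nabla\varphi\|_{L_2} \le \|A\|\,\|\psi\|_\eps\|\varphi\|_\eps$. For coercivity I would use Korn's inequality on $\Dsf_\eps$ together with the scaled Friedrich inequality of Lemma~\ref{lma:scaling2}(f): since $A$ is uniformly positive definite, $b(\varphi,\varphi) \ge c\|\epsb(\varphi)\|_{L_2(\Dsf_\eps)^{d\times d}}^2 \gtrsim \|\nabla\varphi\|_{L_2(\Dsf_\eps)^{d\times d}}^2$, and then Lemma~\ref{lma:scaling2}(f) controls $\eps^2\|\varphi\|_{L_2(\Dsf_\eps)^d}^2$ by $\|\nabla\varphi\|_{L_2(\Dsf_\eps)^{d\times d}}^2$, so $b(\varphi,\varphi) \gtrsim \|\varphi\|_\eps^2$ with an $\eps$-independent constant. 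The point requiring care is that Korn's constant on $\Dsf_\eps$ might a priori degenerate as $\eps \to 0$; I would handle this by a scaling argument, transporting $\varphi$ to $\varphi \circ \Phi_\eps^{-1}$ on the fixed domain $\Dsf$ via Lemma~\ref{lma:scaling1}(b), applying Korn on $\Dsf$ where the constant is fixed, and scaling back — the homogeneity of both sides of Korn's inequality under the dilation makes the $\eps$-powers cancel.

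Lax--Milgram then yields a unique $\tilde V_\eps \in H^1_{\Gamma_\eps}(\Dsf_\eps)^d$ with $\|\tilde V_\eps\|_\eps \le \frac{1}{\alpha}\|\tilde F_\eps\|$ where $\alpha$ is the $\eps$-independent coercivity constant; setting $V_\eps := w_\eps + \tilde V_\eps$ gives existence, and uniqueness follows since the difference of two solutions lies in $H^1_{\Gamma_\eps}(\Dsf_\eps)^d$ and is tested against the coercive form. Combining the bounds,
\[
\|V_\eps\|_\eps \le \|w_\eps\|_\eps + \|\tilde V_\eps\|_\eps \le \|w_\eps\|_\eps + \tfrac{1}{\alpha}\big(\|F_\eps\| + \|A\|\,\|w_\eps\|_\eps\big) \le C\big(\|F_\eps\| + \eps^{1/2}\|g_\eps\|_{L_2(\Gamma_\eps)^d} + |g_\eps|_{H^{1/2}(\Gamma_\eps)^d}\big),
\]
which is \eqref{eq:aux_ineq}. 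The main obstacle, as noted, is verifying that Korn's inequality (and hence the coercivity constant) does not degenerate under inflation; everything else is a routine combination of Lax--Milgram with the scaling lemmas already established.
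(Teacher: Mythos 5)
Your proposal is correct and follows essentially the same route as the paper: extend $g_\eps$ via $Z_{\Gamma_\eps}$, reduce to a homogeneous problem, apply Lax--Milgram for the form $\varphi\mapsto\int_{\Dsf_\eps}A\epsb(\cdot):\epsb(\varphi)\,dx$ on $H^1_{\Gamma_\eps}(\Dsf_\eps)^d$ with the scaled norm, and combine the bounds. The paper simply asserts that the bilinear form is elliptic and continuous with respect to $\|\cdot\|_\eps$, whereas you spell out the $\eps$-uniformity of the Korn and Friedrichs constants via the dilation argument — a worthwhile elaboration, but not a different proof.
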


\begin{proof}
    Let $a_\eps(u,v):=\int_{\Dsf_\eps}A \epsb(u):\epsb(v)\;dx$, $u,v\in H^1(\Dsf_\eps)^d$. Thanks to our assumption $A$ is uniformly positive definite and thus one readily checks that $a_\eps$ is an elliptic and continuous bilinear form on $H^1_{\Gamma_\eps}(\Dsf_\eps)^d$ endowed with the scaled norm $\|\cdot\|_\eps$. Furthermore, let $Z_{\Gamma_\eps}$ denote the right-inverse extension operator of the trace operator $T_{\Gamma_\eps}$ and define $G_\eps:=Z_{\Gamma_\eps}(g_\eps)\in H^1(\Dsf_\eps)^d$.\newline
Now consider $\tilde{F_\eps}(\varphi):=F_\eps(\varphi)-a_\eps(G_\eps,\varphi)$. Since
\begin{equation}
    \begin{split}
        |\tilde{F_\eps}(\varphi)| & \le |F_\eps(\varphi)|+|a_\eps(G_\eps,\varphi)| \\    
                                  & \le  C\|F_\eps\|\|\varphi\|_\eps+C\|G_\eps\|_\eps\|\varphi\|_\eps
                                                                                    \le C\|\varphi\|_\eps \quad \text{ for all }\varphi\in H_{\Gamma_\eps}^1(\Dsf_\eps)^d,
\end{split}
\end{equation}
for a constant $C>0$, $\tilde{F_\eps}$ is continuous wih respect to $\|\cdot\|_\eps$. Thus, by the Lax-Milgram theorem, there exists a unique $u_\eps\in H_{\Gamma_\eps}^1(\Dsf_\eps)^d$, such that
\begin{equation}
a_\eps(u_\eps,\varphi)=\tilde{F_\eps}(\varphi)\quad\text{ for all }\varphi\in H^1_{\Gamma_\eps}(\Dsf_\eps)^d.
\end{equation}
Hence we conclude that $V_\eps:=u_\eps+G_\eps$ satisfies \eqref{eq:aux_vol} and \eqref{eq:aux_bd}. Uniqueness is guaranteed by the ellipticity of $a_\eps$. Applying the triangle inequality and using the continuity of $Z_{\Gamma_\eps}$ to estimate $\|G_\eps\|_\eps$ yields
\begin{align*}
    \|V_\eps\|_\eps & \le \|u_\eps\|_\eps+\|G_\eps\|_\eps\le C(\|\tilde{F_\eps}\|+\|G_\eps\|_\eps) \le C (\|F_\eps\|+\|G_\eps\|_\eps) \\
                    & \le C(\|F_\eps\|+\eps^{\frac{1}{2}}\|g_\eps\|_{L_2(\Gamma_\eps)^d}+|g_\eps|_{H^{\frac{1}{2}}(\Gamma_\eps)^d}),
\end{align*}
which shows \eqref{eq:aux_ineq} and finishes the proof.
\end{proof}

\begin{lemma}\label{lma:U1_rep}
There exists a unique solution $[U]\in\dot{BL}(\VR^d)^d$ to
\begin{equation}\label{eq:first_U}
    \int_{\VR^d}\VC_{\omega} \epsb([U]):\epsb(\varphi)\;dx=\int_\omega(\VC_2-\VC_1)\epsb(u_0)(x_0):\epsb(\varphi)\;dx \quad \text{ for all } \varphi\in \dot{BL}(\VR^d)^d.
\end{equation}
Moreover, there exists a representative $\Ui \in [U]$, which satisfies pointwise for $|x|\rightarrow \infty$:
\begin{equation}\label{eq:U1_def}
\Ui(x)=\Ri(x)+\mathcal{O}(|x|^{-d}),
\end{equation}
where $\Ri:\VR^d\rightarrow \VR^d$ satisfies
\begin{equation}
|\Ri(x)|=\begin{cases}
 b_2 |x|^{-1} \quad &\text{ for }d=2,\\
b_3 |x|^{-2} \quad &\text{ for }d=3,
\end{cases}
\end{equation}
for some constants $b_2,b_3\in \VR$.
\end{lemma}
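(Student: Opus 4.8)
The plan is to prove the two assertions separately: existence and uniqueness by the Lax--Milgram theorem on the Beppo--Levi space, and the behaviour at infinity by comparing the solution with a potential built from the fundamental solution of the Lamé operator. For the first part I would work on the Hilbert space $\dot{BL}(\VR^d)^d$ with norm $\|[\varphi]\|=\|\nabla\varphi\|_{L_2(\VR^d)^{d\times d}}$. The left-hand side of \eqref{eq:first_U} defines $a([U],[\varphi]):=\int_{\VR^d}\VC_\omega\epsb([U]):\epsb([\varphi])\,dx$, which is well defined on the quotient (it depends only on $\epsb$) and continuous; it is coercive thanks to the uniform ellipticity of $\VC_1,\VC_2$ together with the whole-space Korn inequality $\|\nabla\varphi\|_{L_2(\VR^d)^{d\times d}}^2\le 2\|\epsb(\varphi)\|_{L_2(\VR^d)^{d\times d}}^2$, which follows from the identity $2\|\epsb(\varphi)\|^2=\|\nabla\varphi\|^2+\|\Div\varphi\|^2$ valid for $\varphi$ with square-integrable gradient. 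Since $\omega$ is bounded, $[\varphi]\mapsto\int_\omega(\VC_2-\VC_1)\epsb(u_0)(x_0):\epsb(\varphi)\,dx$ is a continuous linear functional, and Lax--Milgram yields the unique $[U]$.

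For the far field I would fix a representative $U$ and set $M:=(\VC_2-\VC_1)\epsb(u_0)(x_0)$, a constant matrix, so that \eqref{eq:first_U} is equivalent to $\Div(\VC_2\epsb(U))=\Div(g)$ in $\mathcal{D}'(\VR^d)^d$ with $g:=M\chi_\omega-(\VC_1-\VC_2)\chi_\omega\epsb(U)\in L_2(\VR^d)^{d\times d}$, supported in $\overline\omega$. Let $\mathbf{E}$ denote the matrix-valued fundamental solution of $\Div(\VC_2\epsb(\,\cdot\,))$, which is positively homogeneous of degree $2-d$ for $d=3$ and of logarithmic type for $d=2$, with $\nabla\mathbf{E}$ positively homogeneous of degree $1-d$ and $\nabla^2\mathbf{E}$ a Calderón--Zygmund kernel. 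Put $W:=\mathbf{E}*\Div(g)$; moving the divergence onto $\mathbf{E}$ exhibits $W$ as a convolution of $\nabla\mathbf{E}$ with the compactly supported $L_2$ entries of $g$, so $\nabla W\in L_2(\VR^d)^{d\times d}$ by the Calderón--Zygmund estimate and $\Div(\VC_2\epsb(W))=\Div(g)$. Consequently $\Div(\VC_2\epsb(U-W))=0$ on all of $\VR^d$ with $\nabla(U-W)\in L_2(\VR^d)^{d\times d}$; differentiating the system and using that its symbol is invertible off the origin (a Liouville argument via the Fourier transform) forces $\nabla(U-W)=0$, i.e.\ $U-W$ is a constant. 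Hence $W\in[U]$, and I would take $\Ui:=W$; when $d=3$ the Sobolev inequality $\|\varphi\|_{L^{2^\ast}(\VR^d)^d}\le C\|\nabla\varphi\|_{L_2(\VR^d)^{d\times d}}$ moreover forces $\Ui=U$.

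To conclude, I would expand $\Ui=W$ at infinity: because $g$ is supported in some $B_R(0)$ and $g\in L_1$, a first-order Taylor expansion of $y\mapsto(\nabla\mathbf{E})(x-y)$ gives $\Ui(x)=\Ri(x)+\mathcal{O}(|x|^{-d})$, where $\Ri(x)$ is the contraction of $(\nabla\mathbf{E})(x)$ with the constant matrix $\int_\omega g(y)\,dy$. This leading term is positively homogeneous of degree $1-d$, which yields the form recorded in the statement ($b_2|x|^{-1}$ in $d=2$, $b_3|x|^{-2}$ in $d=3$); the monopole term drops out because $\int_{\VR^d}\Div(g)\,dx=0$, and the remainder is controlled by the mean value theorem together with $|\nabla^2\mathbf{E}(z)|\le C|z|^{-d}$ and $|z|\ge|x|/2$ for $|x|$ large. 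This is exactly \eqref{eq:U1_def}.

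I expect the main obstacle to be the far-field step: concretely the Liouville-type uniqueness that upgrades "$\Div(\VC_2\epsb(U-W))=0$ and $\nabla(U-W)\in L_2(\VR^d)^{d\times d}$" to "$U-W$ constant", and the mild bookkeeping about that constant, which is irrelevant in $d=2$ (the space being defined modulo constants) but must be removed via the Sobolev inequality in $d=3$. The well-posedness and the kernel expansion are otherwise routine.
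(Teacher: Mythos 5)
Your proof is correct, and it shares the paper's overall strategy (Lax--Milgram for well-posedness, a fundamental-solution representation plus a vanishing monopole moment for the far field), but the representation you use for the asymptotics is genuinely different. The paper characterises $\Ui$ by the transmission problem across $\partial\omega$, invokes the Ammari--Kang solvability result to produce single layer potentials $\mathcal{S}^1_\omega f$ in $\omega$ and $\mathcal{S}^2_\omega g$ outside, deduces $\int_{\partial\omega}g\,dS=0$ from $\int_{\partial\omega}(\VC_2-\VC_1)\epsb(u_0)(x_0)n\,dS=0$, and Taylor-expands $\Gamma_2(x-y)$ in $y$. You instead rewrite the equation as $\Div(\VC_2\epsb(U))=\Div g$ with a compactly supported $L_2$ volume source $g=M\chi_\omega-(\VC_1-\VC_2)\chi_\omega\epsb(U)$, represent a solution as $W=\mathbf{E}*\Div g$, and then need two ingredients the paper does not: the Calder\'on--Zygmund bound to put $\nabla W$ in $L_2(\VR^d)^{d\times d}$, and the Fourier/Liouville step to identify $W$ with the variational solution modulo constants (for which you should note that Beppo--Levi functions are tempered and that the whole-space Korn identity extends from $C_c^\infty$ by the density defining $\dot{BL}(\VR^d)^d$). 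In exchange you avoid citing the boundary-integral solvability theorem and any discussion of traces of $\epsb(U)$ on $\partial\omega$, and the monopole cancellation becomes the tautology $\langle\Div g,1\rangle=0$ rather than a property of the layer density; your route also pins down the decaying representative intrinsically (via $L^{2^*}$ in $d=3$). Both arguments produce the same leading term, a contraction of $\nabla\mathbf{E}(x)$ (equivalently $\nabla\Gamma_2(x)$) with a constant first moment, homogeneous of degree $1-d$, which is how the displayed formula $|\Ri(x)|=b_d|x|^{1-d}$ is to be read in both your write-up and the paper's.
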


\begin{proof}
Unique solvability of \eqref{eq:first_U} follows directly from the Lemma of Lax-Milgram. Thus, the only thing left to show is the asymptotic behaviour \eqref{eq:U1_def} of $\Ui$. 
For this we first note that $\Ui$ can be characterised by the following set of equations:
\begin{align}
-\Div(\VC_1 \epsb(\Ui))&=0\quad &\text{ in }\omega,\label{eq:strong_U1_a}\\
-\Div(\VC_2 \epsb(\Ui))&=0\quad &\text{ in }\bar{\omega}^{\mathsf{c}},\\
[\Ui]^+&=[\Ui]^-\quad &\text{ on }\partial\omega,\\
[\VC_1 \epsb(\Ui)n]^+-[\VC_2 \epsb(\Ui)n]^- &=(\VC_2-\VC_1)\epsb(\Ui)(x_0)n\quad &\text{ on }\partial\omega\label{eq:strong_U1_o}.
\end{align}
By \cite[p.76, Thm. 3.3.8]{b_AM_2008a} there are $f,g \in L_2(\partial \omega)^d$, such that
\begin{align}
\begin{split}
[\mathcal{S}^1_\omega f]^+-[\mathcal{S}^2_\omega g]^-&=0,\quad \text{ on }\partial\omega\\
[\VC_1 \epsb(\mathcal{S}^1_\omega f)n]^+-[\VC_2 \epsb(\mathcal{S}^2_\omega g)n]^- &=(\VC_2-\VC_1)\epsb(\Ui)(x_0)n,\quad \text{ on }\partial\omega,
\end{split}
\end{align}
where $\mathcal{S}^i_\omega f$ denotes the single layer potential on $\partial\omega$ with respect to the fundamental solution $\Gamma_i$, i.e. $\mathcal{S}^i_\omega h(x):=\int_{\partial\omega}\Gamma_i(x-y)h(y)\; dS(y)$, $i\in\{1,2\}$. Additionally, since $\int_{\partial\omega}(\VC_2-\VC_1)\epsb(\Ui)(x_0)n\;dS=0$, it follows that $\int_{\partial\omega}g\;dS=0$. Thus,
\[
    \Ui:=\begin{cases}
                \mathcal{S}^1_\omega f\quad\text{ in }\omega,\\
            \mathcal{S}^2_\omega g\quad\text{ in }\bar{\omega}^{\mathsf{c}},
        \end{cases}
\]
satisfies \eqref{eq:strong_U1_a}-\eqref{eq:strong_U1_o}. Furthermore, considering $\int_{\partial\omega}g\;dS=0$, a Taylor expansion of $\Gamma_2(x-y)$ in $y=0$ yields the desired asymptotic behaviour  \eqref{eq:U1_def}.
\end{proof}

\begin{theorem}\label{thm:main_U1}
Let $U^{(1)}_\eps$ be defined as in Definition \ref{def:variation_state} and $\alpha \in (0,1)$.
There exists a constant $C>0$, such that
\begin{align}
\|U_\eps^{(1)}-U^{(1)}\|_\eps\le
\begin{cases}
 C\eps \quad &\text{ for }d=3,\\
C\eps^{1-\alpha} \quad &\text{ for }d=2, 
\end{cases}
\end{align}
for $\eps$ sufficiently small.
\end{theorem}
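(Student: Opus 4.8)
The plan is to derive the boundary value problem on the inflated domain $\Dsf_\eps=\Phi_\eps^{-1}(\Dsf)$ satisfied by the remainder $z_\eps:=U^{(1)}_\eps-U^{(1)}$ and then to bound $\|z_\eps\|_\eps$ by means of the a priori estimate of Lemma~\ref{lma:aux}. Throughout I use that $\bar\omega\subset\Dsf_\eps$ for $\eps$ small (since $x_0\in\Dsf$ and $\omega$ is bounded), so that $\VC_{\omega_\eps}=\VC_2$ near $\partial\Dsf$ and $\chi_\omega$ vanishes near $\partial\Dsf_\eps$; I abbreviate $\Gamma^N_\eps:=\Phi_\eps^{-1}(\Gamma^N)$.

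\emph{Equation for $z_\eps$.} Rescaling \eqref{eq:diff_fixed} to $\Dsf_\eps$ by inserting $\varphi=\psi\circ\Phi_\eps^{-1}$ with $\psi\in H^1_{\Gamma_\eps}(\Dsf_\eps)^d$ and dividing by $\eps^{d-1}$ gives
\[
\int_{\Dsf_\eps}\VC_\omega\epsb(U^{(1)}_\eps):\epsb(\psi)\,dx=\int_\omega(\VC_2-\VC_1)\epsb(u_0)(\Phi_\eps(x)):\epsb(\psi)\,dx+\eps\int_\omega(f_1-f_2)(\Phi_\eps(x))\cdot\psi\,dx,
\]
together with $U^{(1)}_\eps|_{\Gamma_\eps}=0$. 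On the other hand \eqref{eq:first_U} says that $\sigma:=\VC_\omega\epsb(U^{(1)})-(\VC_2-\VC_1)\epsb(u_0)(x_0)\chi_\omega$ lies in $L_2(\VR^d)^{d\times d}$ and is divergence free; an integration by parts over $\Dsf_\eps$, using $\psi|_{\Gamma_\eps}=0$ and the interior regularity of $U^{(1)}$ away from $\partial\omega$, yields
\[
\int_{\Dsf_\eps}\VC_\omega\epsb(U^{(1)}):\epsb(\psi)\,dx=\int_\omega(\VC_2-\VC_1)\epsb(u_0)(x_0):\epsb(\psi)\,dx+\int_{\Gamma^N_\eps}(\VC_2\epsb(U^{(1)})n)\cdot\psi\,dS.
\]
Subtracting, $z_\eps$ solves $\int_{\Dsf_\eps}\VC_\omega\epsb(z_\eps):\epsb(\psi)\,dx=F_\eps(\psi)$ for all $\psi\in H^1_{\Gamma_\eps}(\Dsf_\eps)^d$ and $z_\eps|_{\Gamma_\eps}=-U^{(1)}|_{\Gamma_\eps}$, with
\[
F_\eps(\psi)=\int_\omega(\VC_2-\VC_1)\big(\epsb(u_0)(\Phi_\eps(x))-\epsb(u_0)(x_0)\big):\epsb(\psi)\,dx+\eps\int_\omega(f_1-f_2)(\Phi_\eps(x))\cdot\psi\,dx-\int_{\Gamma^N_\eps}(\VC_2\epsb(U^{(1)})n)\cdot\psi\,dS.
\]

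\emph{A priori estimate and term-by-term bounds.} Applying Lemma~\ref{lma:aux} with $A=\VC_\omega$ and $g_\eps=-U^{(1)}|_{\Gamma_\eps}$ gives $\|z_\eps\|_\eps\le C(\|F_\eps\|+\eps^{1/2}\|U^{(1)}\|_{L_2(\Gamma_\eps)^d}+|U^{(1)}|_{H^{1/2}(\Gamma_\eps)^d})$. The two Dirichlet-data terms are handled by Lemma~\ref{lma:remainder_est} applied to $V=U^{(1)}$ with $m=1$ if $d=2$ and $m=2$ if $d=3$ (the pointwise decay of $U^{(1)}$ and of $\nabla U^{(1)}$ needed there comes from Lemma~\ref{lma:U1_rep}), and both are of order $\eps^{(2m+2-d)/2}$, i.e. $\eps$ if $d=2$ and $\eps^{3/2}$ if $d=3$. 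For $\|F_\eps\|$ I would bound the three contributions separately: (a) by Assumption~\ref{ass:regularity}, $|\epsb(u_0)(\Phi_\eps(x))-\epsb(u_0)(x_0)|\le C\eps$ on $\omega$, so H\"older's inequality bounds the first term by $C\eps\|\nabla\psi\|_{L_2(\Dsf_\eps)^{d\times d}}\le C\eps\|\psi\|_\eps$; (b) since $f_1,f_2$ are bounded near $x_0$, the second term is $\le C\eps\|\psi\|_{L_1(\omega)^d}$, and here H\"older together with Lemma~\ref{lma:scaling2}(b) gives $\|\psi\|_{L_1(\omega)^d}\le C\|\psi\|_{L_{2^\ast}(\Dsf_\eps)^d}\le C\|\psi\|_\eps$ if $d=3$ (hence order $\eps$), whereas if $d=2$, H\"older together with Lemma~\ref{lma:scaling2}(c) (with $\delta$ chosen in accordance with the prescribed $\alpha$) gives $\|\psi\|_{L_1(\omega)^d}\le C\|\psi\|_{L_{(2-\delta)^\ast}(\Dsf_\eps)^d}\le C\eps^{-\alpha}\|\psi\|_\eps$ (hence order $\eps^{1-\alpha}$); (c) the boundary term is $\le\|\VC_2\epsb(U^{(1)})n\|_{L_2(\Gamma^N_\eps)^d}\|\psi\|_{L_2(\Gamma^N_\eps)^d}$, which by Lemma~\ref{lma:remainder_est}(iii) and Lemma~\ref{lma:scaling2}(d) is $\le C\eps^{(2m+3-d)/2}\eps^{-1/2}\|\psi\|_\eps=C\eps^{(2m+2-d)/2}\|\psi\|_\eps$. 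Collecting everything, $\|F_\eps\|\le C\eps$ when $d=3$ and $\|F_\eps\|\le C\eps^{1-\alpha}$ when $d=2$, while the Dirichlet-data contributions are of the same or higher order; this gives the claimed estimate.

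\emph{Main obstacle.} The delicate step is the planar estimate of the volume-force contribution (b): in $d=3$ the embedding $H^1(\Dsf_\eps)\hookrightarrow L^{2^\ast}(\Dsf_\eps)$ holds with an $\eps$-independent constant, but in $d=2$ there is no embedding into $L^\infty$ and the substitute embedding into $L^q(\Dsf_\eps)$, $q<\infty$, costs a factor $\eps^{-\alpha}$ (Lemma~\ref{lma:scaling2}(c)); this loss is exactly what degrades the rate from $\eps$ to $\eps^{1-\alpha}$ in dimension two. A more technical point is the clean derivation of the weak form for $U^{(1)}$ on $\Dsf_\eps$ above, in particular the identification of the boundary flux on $\Gamma^N_\eps$, which rests on $\sigma$ being an $L_2$ divergence-free field on $\VR^d$, on $\bar\omega$ lying compactly inside $\Dsf_\eps$ for small $\eps$, and on the smoothness of $U^{(1)}$ near $\partial\Dsf_\eps$.
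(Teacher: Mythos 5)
Your proposal is correct and follows essentially the same route as the paper: rescale \eqref{eq:diff_fixed} to $\Dsf_\eps$, restrict \eqref{eq:first_U} to $\Dsf_\eps$ by integration by parts (producing the flux term on $\Gamma^N_\eps$), subtract, and apply Lemma~\ref{lma:aux} with the boundary data $-U^{(1)}|_{\Gamma_\eps}$, estimating the volume terms via Taylor expansion and the scaled Gagliardo--Nirenberg inequalities (Lemma~\ref{lma:scaling2}(b),(c)) and the boundary terms via Lemma~\ref{lma:remainder_est} with $m=d-1$. Your identification of the two-dimensional embedding loss as the source of the $\eps^{-\alpha}$ factor matches the paper's treatment exactly.
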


\begin{proof}
We start by deriving an equation for $U^{(1)}_\eps$. For this purpose, we change variables in \eqref{eq:diff_fixed} to obtain
\begin{align}\label{eq:diff_scaled}
\begin{split}
\int_{\Dsf_\eps} \VC_{\omega} \epsb(U_\eps^{(1)}):\epsb(\varphi)\;dx =& \int_{\omega} (\VC_2-\VC_1)\epsb(u_0)\circ \Phi_\eps:\epsb(\varphi)\;dx\\
&+\eps\int_{\omega} (f_1-f_2)\circ \Phi_\eps\cdot \varphi\;dx \quad \text{ for all } \varphi\in H^1_{\Gamma_\eps}(\Dsf_\eps)^d.
\end{split}
\end{align}
Splitting the integral on the left hand side of \eqref{eq:first_U}, integrating by 
parts and using $\Div(\VC_2\epsb(\Ui))=0$ in $\bar{\omega}^{\mathsf{c}}$ yields
\begin{align}\label{eq:first_U_bounded}
\begin{split}
    \int_{\Dsf_\eps} \VC_{\omega} \epsb(U^{(1)}):\epsb(\varphi)\;dx = & \int_{\omega} (\VC_2-\VC_1)\epsb(x_0):\epsb(\varphi)\;dx-\int_{\VR^d\setminus\Dsf_\eps} \VC_{2} \epsb(U^{(1)}):\epsb(\tilde{\varphi})\;dx\\
= & \int_{\omega} (\VC_2-\VC_1)\epsb(x_0):\epsb(\varphi)\;dx-\int_{\Gamma_\eps^N} \VC_{2} \epsb(U^{(1)})\tilde{n}\cdot\tilde{\varphi}\;dS\\
& + \int_{\VR^d\setminus\Dsf_\eps} \Div(\VC_{2} \epsb(U^{(1)}))\cdot\tilde{\varphi}\;dx\\
= & \int_{\omega} (\VC_2-\VC_1)\epsb(x_0):\epsb(\varphi)\;dx+\int_{\Gamma_\eps^N} \VC_{2} \epsb(U^{(1)})n\cdot\varphi\;dS,
\end{split}
\end{align}
where $\varphi\in H^1_{\Gamma_\eps}(\Dsf_\eps)^d$, $\tilde{\varphi}$ denotes an extension to the whole domain and $\bar{n}$ denotes the outer normal vector on $\bar{\Dsf_\eps^{\mathsf{c}}}$. Subtracting \eqref{eq:diff_scaled} and \eqref{eq:first_U_bounded} results in
\begin{align}\label{eq:diffdiff_scaled}
\begin{split}
    \int_{\Dsf_\eps} \VC_{\omega} \epsb(U_\eps^{(1)}-U^{(1)}):\epsb(\varphi)\; dx= &  \int_{\omega} (\VC_2-\VC_1)[\epsb(u_0)\circ \Phi_\eps-\epsb(u_0)(x_0)]:\epsb(\varphi)\;dx\\
&+\eps\int_{\omega} (f_1-f_2)\circ \Phi_\eps\cdot \varphi\;dx\\
&-\int_{\Gamma_\eps^N} \VC_{2} \epsb(U^{(1)})n\cdot\varphi\;dS
\end{split}
\end{align}
for all $\varphi\in H^1_{\Gamma_\eps}(\Dsf_\eps)^d$. Now, we apply Lemma~\ref{lma:aux} to $V_\eps:=U^{(1)}_\eps-U^{(1)}$, $g_\eps:=-U^{(1)}|_{\Gamma_\eps}$ and $F^1_\eps$ defined as the right hand side of \eqref{eq:diffdiff_scaled}. Thus, we conclude that there exists a constant $C>0$, such that
\begin{equation}\label{eq:norm_bd}
\|U^{(1)}_\eps-U^{(1)}\|_\eps\le C(\|F^1_\eps\|+\eps^{\frac{1}{2}}\|U^{(1)}\|_{L_2(\Gamma_\eps)^d}+|U^{(1)}|_{H^{\frac{1}{2}}(\Gamma_\eps)^d}).
\end{equation}
To finish our proof, we need to estimate the norms of $F^1_\eps$ and $U^{(1)}$, which appear in \eqref{eq:norm_bd}. For the sake of clarity, we split the functional $F^1_\eps$ according to \eqref{eq:diffdiff_scaled} and treat each term separately.\newline
Let $\varphi\in H^1_{\Gamma_\eps}(\Dsf_\eps)$.

\begin{itemize}

\item At first we consider $\int_{\omega} (\VC_2-\VC_1)[\epsb(u_0)\circ \Phi_\eps-\epsb(u_0)(x_0)]:\epsb(\varphi)\;dx$. Since $u_0\in C^3(B_\delta(x_0))$, we get
\begin{equation}
\epsb(u_0)(x_0+\eps x)=\epsb(u_0)(x_0)+\nabla \epsb(u_0)(x_0)\eps x + o(\eps x).
\end{equation}
Together with an application of Hölder's inequality, we conclude
\begin{align}
\begin{split}
\left|\int_{\omega} [\epsb(u_0)\circ \Phi_\eps-\epsb(u_0)(x_0)]:\epsb(\varphi)\;dx\right| &\le C\|\epsb(u_0)\circ \Phi_\eps-\epsb(u_0)(x_0)\|_{L_2(\omega)}\|\epsb(\varphi)\|_{L_2(\omega)^{d\times d}}\\
&\le C\eps\|\epsb(\varphi)\|_{L_2(\omega)^{d\times d}}\le C\eps\|\varphi\|_\eps.
\end{split}
\end{align}
\item Next we consider $\eps\int_{\omega} (f_1-f_2)\circ \Phi_\eps\cdot \varphi\;dx$. Since we want to apply the Gagliardo-Nirenberg inequality, we need to distinguish between dimensions $d=2$ and $d=3$.\newline
For $d=3$ an application of Hölder's inequality with respect to $p=2^\ast$ and Lemma \ref{lma:scaling2}, item (b) yield
\begin{align}
\begin{split}
\left|\eps\int_{\omega} (f_1-f_2)\circ \Phi_\eps \cdot\varphi\;dx\right| \le C\eps \|\varphi\|_\eps.
\end{split}
\end{align}
For $d=2$ we apply Hölder's inequality with respect to $p=(2-\delta)^\ast$ for $\delta>0$ sufficiently small and Lemma \ref{lma:scaling2}, item (c) to obtain
\begin{align}
\begin{split}
\left|\eps\int_{\omega} (f_1-f_2)\circ \Phi_\eps \cdot\varphi\;dx\right| \le C\eps^{1-\alpha} \|\varphi\|_\eps.
\end{split}
\end{align}
for a constant $C>0$.

\item Finally, the last term can be estimated using Hölder's inequality and the scaled trace inequality (Lemma \ref{lma:scaling2} item (d)):
\begin{align}
\left|\int_{\Gamma_\eps^N} \VC_{2} \epsb(U^{(1)})n\cdot\varphi\;dS\right|&\le C \|\epsb(\Ui)\|_{L_2(\partial \Dsf_\eps)^{d\times d}}\|\varphi\|_{L_2(\partial \Dsf_\eps)^d}\\
&\le C\eps^{-\frac{1}{2}} \|\epsb(\Ui)\|_{L_2(\partial \Dsf_\eps)^{d\times d}}\|\varphi\|_\eps.
\end{align}
Thus, Lemma~\ref{lma:remainder_est}, item (iii) with $m=d-1$ yields
\begin{equation}
\left|\int_{\Gamma_\eps^N} \VC_{2} \epsb(U^{(1)})n\cdot\varphi\;dS\right|\le C\eps^{\frac{d}{2}}\|\varphi\|_\eps.
\end{equation}
\end{itemize}
Combining these estimates results in
\begin{align}\label{eq:F_est}
\|F^1_\eps\|\le
\begin{cases}
 C\eps \quad &\text{ for }d=3,\\
C\eps^{1-\alpha} \quad &\text{ for }d=2, 
\end{cases}
\end{align}
for a constant $C>0$. Furthermore, Lemma \ref{lma:remainder_est} item (i) and (ii) with $m=d-1$ yield
\begin{equation}\label{eq:g_est}
\|U^{(1)}\|_{L_2(\Gamma_\eps)^d}\le C\eps^{\frac{d-1}{2}}, \quad |U^{(1)}|_{H^{\frac{1}{2}}(\Gamma_\eps)^d}\le C \eps^{\frac{d}{2}}.
\end{equation}
Now plugging \eqref{eq:F_est} and \eqref{eq:g_est} into \eqref{eq:norm_bd} finishes the proof.
\end{proof}
\begin{remark}
Rewriting $U_\eps^1-U^1$ leaves us with the first order expansion \[u_\eps(x)\approx u_0(x)+\eps U^1(\Phi_\eps^{-1}(x)),\] where
\begin{equation}
\|u_\eps - [u_0+\eps U^1\circ \Phi_\eps^{-1}]\|_{H^1(\Dsf)^d}=
\begin{cases}
 \mathcal{O}(\eps^{\frac{d}{2}+1}) \quad &\text{ for }d=3,\\
\mathcal{O}(\eps^{\frac{d}{2}+1-\alpha}) \quad &\text{ for }d=2, \alpha>0.
\end{cases}
\end{equation}
\end{remark}


\subsection{Second order asymptotic expansion}
As mentioned earlier, the boundary layer corrector $U^{(1)}$ introduces an error at the boundary of $\Dsf$. Therefore, we introduce the regular corrector $u^{(1)}$, which compensates the boundary error. Additionally, in order to obtain a second order expansion we introduce the second order approximations $U^{(2)}$ and $u^{(2)}$. In contrast to the first order approximation 
we need to split the boundary layer corrector $U^{(2)}$ into two terms, where one solves a lower order equation and the other solves an analogue to $U^{(1)}$. Furthermore, we need to add the regular corrector $\vii$ to compensate the error introduced by $\Uii$. The following lemma describes each corrector:

\begin{lemma}\label{lma:U2_def}
\begin{itemize}

\item There is a unique solution $\vi\in H^1(\Dsf)^d$ with $u^{(1)}(x)=-R^{(1)}(x-x_0)$ on $\Gamma$, such that
\begin{equation}\label{eq:vi}
\int_\Dsf \VC_2 \epsb(\vi):\epsb(\varphi)\; dx=-\int_{\Gamma^N}\VC_2 \epsb(\Ri)(x-x_0)n\cdot \varphi\; dS,
\end{equation}
for all $\varphi \in H^1_\Gamma(\Dsf)^d$.

\item There is a solution $[U]\in\dot{BL}_p(\VR^d)^d$, to
\begin{equation}\label{eq:Uiih}
\int_{\VR^d}\VC_\omega\epsb([U]):\epsb(\varphi)\; dx=\int_\omega (\VC_2-\VC_1)[\nabla\epsb(u_0)(x_0)x]:\epsb(\varphi)\; dx,
\end{equation}
for all $\varphi \in \dot{BL}_{p^\prime}(\VR^d)^d$, where 
\[
    p= \begin{cases}
           2+\delta\quad &\text{ for }d=2,\\ 
           2\quad &\text{ for }d=3,
       \end{cases}
\] 
and $\delta>0$ small. Moreover, there exists a representative $\hat{U}^{(2)} \in [U]$, which satisfies pointwise for $|x|\rightarrow \infty$:
\begin{equation}
\hat{U}^{(2)}(x)=\hat{R}^{(2)}(x)+\mathcal{O}(|x|^{1-d}),
\end{equation}
where $\hat{R}^{(2)}:\VR^d\rightarrow \VR^d$ satisfies
\begin{equation}
|\hat{R}^{(2)}(x)|=\begin{cases}
 \hat{c}_2 \ln(|x|) \quad &\text{ for }d=2,\\
 \hat{c}_3 |x|^{-1} \quad &\text{ for }d=3,
\end{cases}
\end{equation}
for some constants $\hat{c}_2,\hat{c}_3\in \VR$.

\item There exists a solution $[U]\in\dot{BL}_p(\VR^d)^d$ to
\begin{equation}\label{eq:Uiit}
\int_{\VR^d}\VC_\omega\epsb([U]):\epsb(\varphi)\; dx=\int_\omega [(f_1(x_0)-f_2(x_0))]\cdot\varphi\; dx,
\end{equation}
for all $\varphi \in C^1_c(\VR^d)^d$, where 
\[
    p=\begin{cases}
         2+\delta\quad &\text{ for }d=2,\\ 
         2\quad &\text{ for }d=3,
     \end{cases}
 \] 
 and $\delta>0$ small. Moreover, there exists a representative $\tilde{U}^{(2)} \in [U]$, which satisfies pointwise for $|x|\rightarrow \infty$:
\begin{equation}\label{eq:asym_Uiit}
\tilde{U}^{(2)}(x)=\tilde{R}^{(2)}(x)+\mathcal{O}(|x|^{1-d}),
\end{equation}
where $\tilde{R}^{(2)}:\VR^d\rightarrow \VR^d$ satisfies
\begin{equation}
|\tilde{R}^{(2)}(x)|=\begin{cases}
 \tilde{c}_2 \ln(|x|) \quad &\text{ for }d=2,\\
\tilde{c}_3 |x|^{-1} \quad &\text{ for }d=3,
\end{cases}
\end{equation}
for some constants $\tilde{c}_2,\tilde{c}_3\in \VR$.

\item There is a unique solution $\vii\in H^1(\Dsf)^d$ with $\vii(x)=-\Rii(x-x_0)$ on $\Gamma$, such that
\begin{equation}\label{eq:vii}
\int_\Dsf \VC_2 \epsb(\vii):\epsb(\varphi)\; dx=-\int_{\Gamma^N}\VC_2 \epsb(\Rii)(x-x_0)n\cdot\varphi\; dS,
\end{equation}
for all $\varphi \in H^1_\Gamma(\Dsf)^d$, where $\Rii:=\hat{R}^{(2)}+\tilde{R}^{(2)}$.

\end{itemize}
\end{lemma}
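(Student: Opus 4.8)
The four assertions split into two groups, and the plan is to dispose of the two regular correctors $\vi,\vii$ first and then construct the exterior (Beppo--Levi) correctors.

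\textbf{The regular correctors.} Here the argument is routine. Since $\Ri$ (resp.\ $\Rii=\hat R^{(2)}+\tilde R^{(2)}$) is an explicit vector field, smooth away from the origin and built from the fundamental solution of linear elasticity, and since $\Gamma,\Gamma^N\subset\partial\Dsf$ lie at a positive distance from $x_0$, the Dirichlet datum $x\mapsto-\Ri(x-x_0)$ is smooth near $\Gamma$, hence in $H^{\frac{1}{2}}(\Gamma)^d$, and $\varphi\mapsto-\int_{\Gamma^N}\VC_2\epsb(\Ri)(x-x_0)n\cdot\varphi\,dS$ is a bounded linear form on $H^1_\Gamma(\Dsf)^d$ by the trace theorem. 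Lifting the boundary datum by a fixed bounded extension operator reduces \eqref{eq:vi} to a problem with homogeneous Dirichlet condition on $\Gamma$; as $\Gamma$ has positive surface measure, Korn's and Friedrichs' inequalities make $(\varphi,\psi)\mapsto\int_\Dsf\VC_2\epsb(\varphi):\epsb(\psi)\,dx$ coercive on $H^1_\Gamma(\Dsf)^d$, so Lax--Milgram gives a unique $\vi$. Equation \eqref{eq:vii} is treated identically with $\Ri$ replaced by $\Rii$.

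\textbf{The exterior correctors in dimension three.} For $d=3$ I take $p=2$ and work in the Hilbert space $\dot{BL}_2(\VR^3)^d$. The form $(\varphi,\psi)\mapsto\int_{\VR^3}\VC_\omega\epsb(\varphi):\epsb(\psi)\,dx$ is continuous and, by Korn's inequality on $\dot{BL}_2$, coercive. The right-hand side of \eqref{eq:Uiih} is bounded because $x\mapsto\nabla\epsb(u_0)(x_0)x$ is bounded on the bounded set $\omega$ and $\|\epsb(\varphi)\|_{L_2(\omega)}\le\|\nabla\varphi\|_{L_2(\VR^3)^{d\times d}}$; for \eqref{eq:Uiit} one additionally invokes the Sobolev embedding $\dot{BL}_2(\VR^3)^d\hookrightarrow L_6(\VR^3)^d$ together with $L_6(\omega)\hookrightarrow L_2(\omega)$. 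Lax--Milgram then produces the claimed $[U]$.

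\textbf{The exterior correctors in dimension two, and the asymptotics.} In $d=2$ the solutions grow like $\ln|x|$, so their gradients decay only like $|x|^{-1}$ and are not square-integrable near infinity; hence $\dot{BL}_2$ is the wrong space and one must pass to $\dot{BL}_{2+\delta}$. Rather than set up an $L_p$-variational theory, I would build the solutions explicitly, exactly along the lines of the proof of Lemma~\ref{lma:U1_rep}: for \eqref{eq:Uiih} the associated transmission problem is the analogue of \eqref{eq:strong_U1_a}--\eqref{eq:strong_U1_o} with the constant stress $(\VC_2-\VC_1)\epsb(u_0)(x_0)$ replaced by the linear field $(\VC_2-\VC_1)\nabla\epsb(u_0)(x_0)x$, and the representation theorem \cite[p.76, Thm. 3.3.8]{b_AM_2008a} yields densities $f,g\in L_2(\partial\omega)^d$ with $\hat U^{(2)}=\mathcal S^1_\omega f$ in $\omega$ and $\mathcal S^2_\omega g$ in $\bar\omega^{\mathsf c}$; for \eqref{eq:Uiit} one first removes the constant body force $f_1(x_0)-f_2(x_0)$ by the Newtonian volume potential $x\mapsto\int_\omega\Gamma_2(x-y)(f_1(x_0)-f_2(x_0))\,dy$ with respect to $\VC_2$ and then corrects the transmission conditions on $\partial\omega$ by single-layer potentials as before. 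In both cases one checks that the representative so obtained lies in $\dot{BL}_p(\VR^d)^d$ with $p=2+\delta$ for $d=2$ and $p=2$ for $d=3$. The asymptotics then follow from Taylor-expanding $\Gamma_2(x-y)$ (resp.\ the single-layer kernels) about $y=0$: in contrast with $\Ui$, the zeroth moment here does \emph{not} vanish --- for \eqref{eq:Uiih} because $\int_{\partial\omega}(\VC_2-\VC_1)[\nabla\epsb(u_0)(x_0)x]\,n\,dS=\int_\omega\Div\big((\VC_2-\VC_1)[\nabla\epsb(u_0)(x_0)x]\big)\,dx\neq0$ in general, and for \eqref{eq:Uiit} because $\int_\omega(f_1(x_0)-f_2(x_0))\,dx\neq0$ --- so the leading term is the one carried by the fundamental solution itself, namely $\ln|x|$ for $d=2$ and $|x|^{-1}$ for $d=3$, while the next moment contributes the remainder of size $\mathcal O(|x|^{1-d})$. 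Finally, setting $\Rii:=\hat R^{(2)}+\tilde R^{(2)}$ and, by linearity, $U^{(2)}=\hat U^{(2)}+\tilde U^{(2)}$ gives the last corrector $\vii$ via the argument of the first paragraph.

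\textbf{Main obstacle.} The delicate point is the two-dimensional exterior problem: obtaining solvability in the non-Hilbert space $\dot{BL}_{2+\delta}(\VR^2)^d$ (where quotienting out constants alone does not fix the logarithmic indeterminacy) and extracting the precise leading term $\hat c_2\ln|x|$, resp.\ $\tilde c_2\ln|x|$, with an $\mathcal O(|x|^{-1})$ remainder from the layer- and Newtonian-potential representation. Everything else is Lax--Milgram together with the trace theorem and the scaling estimates already at our disposal.
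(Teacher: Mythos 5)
Your proposal follows essentially the same route as the paper: Lax--Milgram for the regular correctors $\vi,\vii$, an explicit volume-potential-plus-single-layer construction (via \cite[p.76, Thm.~3.3.8]{b_AM_2008a}) for the exterior correctors, and a Taylor expansion of the kernel for the asymptotics --- your observation that the nonvanishing zeroth moment is what produces the slower $\ln|x|$, resp.\ $|x|^{-1}$, leading term is exactly the point, and is spelled out more explicitly than in the paper. One small correction: for \eqref{eq:Uiit} the Newtonian volume potential must be taken with respect to the \emph{interior} tensor, i.e.\ $u(x)=\int_\omega\Gamma_1(x-y)(f_1(x_0)-f_2(x_0))\,dy$, since the inhomogeneity lives in $\omega$ where the operator is $-\Div(\VC_1\epsb(\cdot))$ (cf.\ \eqref{eq:strong_U2_a}); with $\Gamma_2$ as you wrote, the interior equation is not satisfied and the single-layer correction, which only solves the homogeneous equations on either side of $\partial\omega$, cannot repair this.
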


\begin{remark}
Note that the requirement for $p$ to be greater than $2$ in dimension two is necessary to guarantee that the gradient of $\tilde{U}^{(2)}$ and $\hat{U}^{(2)}$ is in $L_p(\VR^d)^{d\times d}$, which is not true for $p=2$. In fact, there is a solution $[U]\in\dot{BL}(\VR^2)^2$ of \eqref{eq:Uiih}, but no representative $U\in[U]$ has the desired asymptotic representation.
\end{remark}

\begin{proof}
    Unique solvability of \eqref{eq:vi} and \eqref{eq:vii} follows from the Lax-Milgram theorem. 
 In order to show the existence and the desired representation formula of $\tilde{U}^{(2)}$, we use single layer potentials.  Note that a solution $U\in BL_p(\VR^d)^d$  of \eqref{eq:Uiit} can be characterised by the following set of equations:
\begin{align}
-\Div(\VC_1 \epsb(U))&=[(f_1(x_0)-f_2(x_0))]\quad &\text{ in }\omega,\label{eq:strong_U2_a}\\
-\Div(\VC_2 \epsb(U))&=0\quad &\text{ in }\bar{\omega}^{\mathsf{c}},\\
[U]^+&=[U]^-\quad &\text{ on }\partial\omega,\\
[\VC_1 \epsb(U)n]^+&=[\VC_2 \epsb(U)n]^-\quad &\text{ on }\partial\omega\label{eq:strong_U2_o}.
\end{align}
Now consider the volume potential $u(x):=\int_\omega \Gamma_1(x-y)[(f_1(x_0)-f_2(x_0))]\;dy$, for $x\in \omega$, which satisfies the inhomogeneous equation inside $\omega$. By \cite[p.76, Thm. 3.3.8]{b_AM_2008a} there are $f,g \in L_2(\partial \omega)^d$, such that
\begin{align}
[\mathcal{S}^1_\omega f]^+-[\mathcal{S}^2_\omega g]^-&=-u|_{\partial\omega}\quad \text{ on }\partial\omega,\\
[\VC_1 \epsb(\mathcal{S}^1_\omega f)n]^+-[\VC_2 \epsb(\mathcal{S}^2_\omega g)n]^- &=-(\VC_1 \epsb(u)n)|_{\partial\omega}\quad \text{ on }\partial\omega.
\end{align}
Finally
\[
\tilde{U}^{(2)} :=  \begin{cases}
                      u+\mathcal{S}^1_\omega f,\quad&\text{ in }\omega,\\
                 \mathcal{S}^2_\omega g,\quad&\text{ in }\bar{\omega}^{\mathsf{c}}.
                    \end{cases}
\]
satisfies \eqref{eq:strong_U2_a}-\eqref{eq:strong_U2_o} and a Taylor expansion of $\mathcal{S}^2_\omega g$ shows the asymptotic representation of \eqref{eq:asym_Uiit}. The proof for $\hat{U}^{(2)}$ is similar and therefore omitted.
\end{proof}

\begin{remark}
As a consequence of the equivalence relation defining the Beppo-Levi space, the function $\Uii$ is defined up to a constant. Thus, we are allowed to add arbitrary constants to the boundary layer corrector $\Uii$. As a result of the additive property of the leading term $\Rii(x)=\ln(x)$, we need to add the $\eps$ dependent constant $c\ln(\eps)$, with a suitable constant $c\in \VR$ in dimension $d=2$. In dimension $d=3$ this problem does not appear since the leading term $|x|^{-1}$ is multiplicative and therefore can be compensated by the factor $\eps^{d-2}$ found in definition \ref{def:variation_state}.
\end{remark}

\begin{remark}
A possible approach to approximate the solution $\tilde{U}^{(2)}$ of $\eqref{eq:Uiit}$ numerically is to consider for each $\eps>0$ the unique solution $K_\eps\in\overset{\circ}{W^1_p}(\Dsf)^d$ satisfying
\begin{equation}
\eps^2\int_\Dsf\VC_{\omega_\eps}\epsb(K_\eps):\epsb(\varphi)\;dx=\int_{\omega_\eps} [(f_1(x_0)-f_2(x_0))]\varphi\; dx
\end{equation}
for all $\varphi\in\overset{\circ}{W^1_p}(\Dsf)^d$. Applying Hölder's inequality and the Gagliardo-Nirenberg inequality, we get
\begin{align}
\left|\int_{\omega_\eps} [(f_1(x_0)-f_2(x_0))]\varphi\; dx\right|\le |\omega_\eps|^{\frac{1}{((p^\prime)^\ast)^\prime}}\|\nabla \varphi\|_{L_{p^\prime}(\Dsf)^{d\times d}}
\end{align}
and thus follow
\[\eps^2\|\nabla K_\eps\|_{L_p(\Dsf)^{d\times d}}\le C \eps^{\frac{(p^\prime)^\ast-1}{(p^\prime)^\ast}},\]
for a constant $C>0$. Now a change of variables yields $\|\nabla (K_\eps\circ \Phi_\eps)\|_{L_p(\Dsf_\eps)}\le C$. Hence, $K_\eps\circ \Phi_\eps$ is bounded in $\dot{BL}_p$ and therefore has a weakly convergent subsequent with limit $[U]$ satisfying \eqref{eq:Uiit}.
\end{remark}

\begin{theorem}\label{thm:U2_main}
Let $\Uii_\eps$ be defined in Definition \ref{def:variation_state} and $\alpha \in (0,1)$.
\begin{itemize}
\item[(i)] There exists a constant $C>0$, such that
\begin{align}\label{eq:res_U2}
\|\Uii_\eps-\Uii-\eps^{d-2}\vii\circ \Phi_\eps(x)\|_\eps&\le C\eps \quad &\text{ for }d=3,\\
\|\Uii_\eps-\Uii-\eps^{d-2}\vii\circ \Phi_\eps(x)+c\ln(\eps)\|_\eps&\le C\eps^{1-\alpha} \quad &\text{ for }d=2, 
\end{align}
for $\eps>0$ sufficiently small and a suitable constant $c\in\VR$.
\item[(ii)] For $d\in \{2,3\}$, there holds $\displaystyle\lim_{\eps\searrow 0}\|\eps^{-1}(\nabla\Ui_\eps-\nabla\Ui)-\nabla\Uii\|_{L_2(\omega)^{d\times d}}=0$.
\end{itemize}
\end{theorem}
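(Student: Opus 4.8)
The plan is to carry the argument of Theorem~\ref{thm:main_U1} one order further. Set
\[
  Z_\eps := \Uii_\eps - \Uii - \eps^{d-2}\vii\circ\Phi_\eps + c_\eps,\qquad
  c_\eps := 0\ \text{if}\ d=3,\qquad c_\eps := c\ln\eps\ \text{if}\ d=2,
\]
where $\Uii=\hat{U}^{(2)}+\tilde{U}^{(2)}$, $\Rii=\hat{R}^{(2)}+\tilde{R}^{(2)}$, and $c$ is the coefficient of the logarithm in $\Rii$. First I would derive a variational problem for $Z_\eps$ on $\Dsf_\eps$: inserting the decomposition $\Ui_\eps = \Ui + \eps^{d-1}\vi\circ\Phi_\eps + \eps\Uii_\eps$ from Definition~\ref{def:variation_state} into the scaled state equation \eqref{eq:diff_scaled}, replacing $\int_{\Dsf_\eps}\VC_\omega\epsb(\Ui):\epsb(\varphi)$ by \eqref{eq:first_U_bounded}, changing variables back to $\Dsf$ in the $\vi\circ\Phi_\eps$ term and using \eqref{eq:vi}, and restricting the corrector equations \eqref{eq:Uiih},\eqref{eq:Uiit},\eqref{eq:vii} from $\VR^d$ to $\Dsf_\eps$ by integration by parts (legitimate since $\hat{U}^{(2)},\tilde{U}^{(2)},\vii$ solve the transmission systems of Lemma~\ref{lma:U2_def} and $\Div(\VC_2\epsb(\Uii))=0$ on $\bar{\omega}^{\mathsf{c}}$). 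The volume contributions over $\omega$ are Taylor expanded about $x_0$, which is licit by Assumption~\ref{ass:regularity} and $f_1,f_2\in C^2(B_\delta(x_0))^d$: the constant and linear Taylor coefficients are exactly the right-hand sides of \eqref{eq:Uiih},\eqref{eq:Uiit} and cancel, so that after dividing by $\eps$ one obtains
\[
  \int_{\Dsf_\eps}\VC_\omega\epsb(Z_\eps):\epsb(\varphi)\,dx = F^2_\eps(\varphi)\quad\text{for all }\varphi\in H^1_{\Gamma_\eps}(\Dsf_\eps)^d,\qquad Z_\eps|_{\Gamma_\eps}=g^2_\eps,
\]
where $F^2_\eps$ contains only $L_2(\omega)$-remainders of order $\eps$, the boundary fluxes $-\tfrac{1}{\eps}\int_{\Gamma_\eps^N}\VC_2\epsb(\Ui-\Ri)n\cdot\varphi\,dS - \int_{\Gamma_\eps^N}\VC_2\epsb(\Uii-\Rii)n\cdot\varphi\,dS$ (the leading fluxes of $\Ui,\Uii$ being removed by the Neumann data of $\vi,\vii$ and the homogeneity of $\Ri,\Rii$ under $\Phi_\eps$), and an interior term over $\omega_\eps$ carrying the factor $\epsb(\vi+\vii)$; and $g^2_\eps = -\tfrac{1}{\eps}(\Ui-\Ri)|_{\Gamma_\eps} - (\Uii-\Rii)|_{\Gamma_\eps}$. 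The role of $c_\eps$ is precisely to absorb the $\ln\eps$ produced by $\eps^{d-2}\vii\circ\Phi_\eps|_{\Gamma_\eps}$ in dimension two (cf.\ the Remark following Lemma~\ref{lma:U2_def}), so that $g^2_\eps$ has this clean form in both dimensions.

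Next I would apply Lemma~\ref{lma:aux} with $A=\VC_\omega$, yielding $\|Z_\eps\|_\eps \le C(\|F^2_\eps\| + \eps^{\frac{1}{2}}\|g^2_\eps\|_{L_2(\Gamma_\eps)^d} + |g^2_\eps|_{H^{\frac{1}{2}}(\Gamma_\eps)^d})$, and estimate the right-hand side exactly as in Theorem~\ref{thm:main_U1}. The $L_2(\omega)$-remainders are handled using $\|\epsb(\varphi)\|_{L_2(\omega)}\le\|\varphi\|_\eps$ and, for $\|\varphi\|_{L_2(\omega)}$, Lemma~\ref{lma:scaling2}(b) for $d=3$ and Lemma~\ref{lma:scaling2}(c) for $d=2$; the latter is the only place where the loss $\eps^{-\alpha}$ enters, turning $\mathcal{O}(\eps)$ into $\mathcal{O}(\eps^{1-\alpha})$. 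The boundary fluxes are controlled with H\"older's inequality, the scaled trace inequality Lemma~\ref{lma:scaling2}(d), and Lemma~\ref{lma:remainder_est}(iii) applied to $V=\Ui-\Ri$ with $m=d$ (since $|\Ui-\Ri|=\mathcal{O}(|x|^{-d})$ by Lemma~\ref{lma:U1_rep}) and to $V=\Uii-\Rii$ with $m=d-1$ (since $|\Uii-\Rii|=\mathcal{O}(|x|^{1-d})$ by Lemma~\ref{lma:U2_def}), both giving $\mathcal{O}(\eps^{d/2})\|\varphi\|_\eps$. The $\omega_\eps$-term is $\mathcal{O}(\eps^{d-1})\|\varphi\|_\eps$ because $|\omega_\eps|=\mathcal{O}(\eps^d)$, $\vi,\vii$ are smooth near $x_0$, and $\|\epsb(\varphi\circ\Phi_\eps^{-1})\|_{L_2(\omega_\eps)}\le\eps^{\frac{d-2}{2}}\|\varphi\|_\eps$. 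Finally $\eps^{\frac{1}{2}}\|g^2_\eps\|_{L_2(\Gamma_\eps)^d} + |g^2_\eps|_{H^{\frac{1}{2}}(\Gamma_\eps)^d} = \mathcal{O}(\eps^{d/2})$ by Lemma~\ref{lma:remainder_est}(i),(ii) with the same values of $m$. Collecting these bounds gives $\|Z_\eps\|_\eps\le C\eps$ for $d=3$ and $\le C\eps^{1-\alpha}$ for $d=2$, which is \eqref{eq:res_U2}.

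Part (ii) then follows quickly. Since $\|\cdot\|_\eps$ dominates $\|\nabla\cdot\|_{L_2(\Dsf_\eps)^{d\times d}}$ and $\nabla c_\eps=0$, part (i) gives $\|\nabla\Uii_\eps - \nabla\Uii - \eps^{d-2}\nabla(\vii\circ\Phi_\eps)\|_{L_2(\omega)^{d\times d}}\to 0$; moreover $\eps^{d-2}\nabla(\vii\circ\Phi_\eps)=\eps^{d-1}(\nabla\vii)\circ\Phi_\eps\to 0$ in $L_2(\omega)^{d\times d}$ because $\vii$ is continuous near $x_0$, so $\nabla\Uii_\eps\to\nabla\Uii$ in $L_2(\omega)^{d\times d}$. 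Differentiating the identity $\Uii_\eps = \eps^{-1}(\Ui_\eps-\Ui) - \eps^{d-2}\vi\circ\Phi_\eps$ of Definition~\ref{def:variation_state} gives $\eps^{-1}(\nabla\Ui_\eps-\nabla\Ui)=\nabla\Uii_\eps + \eps^{d-1}(\nabla\vi)\circ\Phi_\eps$, and the last term again tends to $0$ in $L_2(\omega)^{d\times d}$, which proves the claim. The main obstacle is the first step: the bookkeeping that matches each term produced by the substitution and the changes of variables with the corresponding corrector equation so that the leading pieces cancel, together with the logarithmic renormalisation in dimension two; once the equation for $Z_\eps$ is in hand the estimates are a direct rerun of those in Theorem~\ref{thm:main_U1}.
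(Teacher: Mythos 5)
Your proposal follows essentially the same route as the paper: the same corrected quantity $V_\eps=\eps\Uiii_\eps$ (with the $c\ln\eps$ renormalisation in $d=2$), the same derivation of its variational problem by combining \eqref{eq:diffdiff_scaled} with the corrector equations, the same application of Lemma~\ref{lma:aux} together with Lemma~\ref{lma:scaling2}(b),(c) and Lemma~\ref{lma:remainder_est} with $m=d$ and $m=d-1$, and the same triangle-inequality argument for part (ii). I see no gap; the estimates and parameter choices all match the paper's proof.
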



\begin{proof}
ad (i): Similar to the estimation of the first order expansion we aim to apply Lemma \ref{lma:aux} in order to handle the inhomogeneous Dirichlet boundary condition on $\Gamma_\eps$. Hence, we start by deriving an equation satisfied by $\Uii_\eps-\Uii-\eps^{d-2}\vii\circ \Phi_\eps(x)=\eps \Uiii_\eps$. Dividing \eqref{eq:diffdiff_scaled} by $\eps>0$, changing variables in \eqref{eq:vi} and \eqref{eq:vii}, and integrating by parts in the exterior domain of \eqref{eq:Uiih} and \eqref{eq:Uiit} yields

\begin{equation}
\int_{\Dsf_\eps}\VC_\omega \epsb(\eps \Uiii_\eps):\epsb(\varphi)\;dx=F_\eps^2(\varphi)+F_\eps^3(\varphi),\quad \text{ for all }\varphi \in H_{\Gamma_\eps}^1(\Dsf_\eps),
\end{equation}
where
\begin{align}
\begin{split}
    F_\eps^2:= & {}\int_\omega [(f_1(\Phi_\eps(x))-f_2(\Phi_\eps(x)))-(f_1(x_0)-f_2(x_0))]\cdot\varphi \;dx\\
&+\int_\omega (\VC_2-\VC_1)[\eps^{-1}(\epsb(u_0)\circ \Phi_\eps-\epsb(u_0)(x_0))-\nabla \epsb(u_0)(x_0)x]:\epsb(\varphi) \;dx\\
&+\eps^{d-1}\int_\omega (\VC_2-\VC_1)[\epsb(\vi(\Phi_\eps))+\epsb(\vii(\Phi_\eps))]:\epsb(\varphi) \;dx
\end{split}\\
\begin{split}
    F_\eps^3 := & {}-\eps^{-1}\int_{\Gamma_\eps^N} [\VC_2 \epsb(\Ui)-\eps^d\VC_2 \epsb(\Ri)(\eps x)]n\cdot\varphi \;dS\\
&-\int_{\Gamma_\eps^N} [\VC_2 \epsb(\Uii)-\eps^{d-1}\VC_2 \epsb(\Rii)(\eps x)]n\cdot\varphi \;dS.
\end{split}
\end{align}
Since the bilinear form only depends on the symmetrised gradient of $\Uiii_\eps$, one readily checks that $\eps\Uiii_\eps+c\ln(\eps)$ satisfies

\begin{equation}
\int_{\Dsf_\eps}\VC_\omega \epsb(\eps \Uiii_\eps+c\ln(\eps)):\epsb(\varphi)\;dx=F_\eps^2(\varphi)+F_\eps^3(\varphi),\quad \text{ for all }\varphi \in H_{\Gamma_\eps}^1(\Dsf_\eps).
\end{equation}
Now we can apply Lemma \ref{lma:aux} to 
 \[   V_\eps:= \begin{cases}
                   \eps \Uiii_\eps \quad &\text{ for }d=3,\\
                  \eps\Uiii_\eps+c\ln(\eps) \quad &\text{ for }d=2,
               \end{cases}
\]

\[F_\eps:=F_\eps^2+F_\eps^3\] and

\begin{equation}\label{eq:exp2_g}
g_\eps:=\begin{cases}(\eps^{d-2}\Ri(\eps x) -\eps^{-1}\Ui)|_{\Gamma_\eps}+(\eps^{d-2}\Rii(\eps x)-\Uii)|_{\Gamma_\eps} \quad &\text{ for }d=3,\\
(\eps^{d-2}\Ri(\eps x) -\eps^{-1}\Ui)|_{\Gamma_\eps}+(\eps^{d-2}\Rii(\eps x)-\Uii+c\ln(\eps))|_{\Gamma_\eps} \quad &\text{ for }d=2.
\end{cases}
\end{equation}
Hence, we get the apriori estimate
\begin{equation}\label{eq:apriori_U2}
\|V_\eps\|_\eps \le C(\|F_\eps\|+\eps^{\frac{1}{2}}\|g_\eps\|_{L_2(\Gamma_\eps)^d}+|g_\eps|_{H^{\frac{1}{2}}(\Gamma_\eps)^d}).
\end{equation}
Due to great similarity between $d=2$ and $d=3$, we will discuss both cases together and only highlight the terms that have to be treated separately. Thus, if not further specified, let $d=2,3$. Again, we start by estimating $\|F_\eps\|$. Let $\varphi \in H^1_{\Gamma_\eps}(\Dsf_\eps)$.
\begin{itemize}

\item A Taylor expansion of $(f_1(\Phi_\eps(x))-f_2(\Phi_\eps(x)))$ at $x_0$, Hölder's inequality and Lemma~\ref{lma:scaling2}, item (b), (c) yield
\begin{equation}
\left|\int_\omega [(f_1(\Phi_\eps(x))-f_2(\Phi_\eps(x)))-(f_1(x_0)-f_2(x_0))]\cdot\varphi \;dx\right|\le \begin{cases}C\eps \|\varphi\|_\eps \quad &\text{ for }d=3,\\
C\eps^{1-\alpha}\|\varphi\|_\eps \quad &\text{ for }d=2, \alpha>0,
\end{cases}
\end{equation}
for a constant $C>0$.

\item Since $u_0$ is three times differentiable in a neighbourhood of $x_0$, there is a constant $C>0$, such that $|\eps^{-1}(\epsb(u_0)\circ \Phi_\eps-\epsb(u_0)(x_0))-\nabla \epsb(u_0)(x_0)x|\le C\eps$, for $x\in\omega$. Hence, Hölder's inequality yields
\begin{equation}
\left|\int_\omega (\VC_2-\VC_1)[\eps^{-1}(\epsb(u_0)\circ \Phi_\eps-\epsb(u_0)(x_0))-\nabla \eps(u_0)(x_0)x]:\epsb(\varphi) \;dx\right|\le C\eps \|\varphi\|_\eps.
\end{equation}

\item Furthermore, by Hölder's inequality we get
\begin{equation}
\left|\eps^{d-1}\int_\omega (\VC_2-\VC_1)[\epsb(\vi(\Phi_\eps))+\epsb(\vii(\Phi_\eps))]:\epsb(\varphi) \;dx\right|\le C\eps \|\varphi\|_\eps,
\end{equation}
for a constant $C>0$.
\end{itemize}

Next we consider the boundary integral terms:

\begin{itemize}

\item Here we note that $\epsb(\Ui)-\eps^d\epsb(\Ri)(\eps x)$ cancels out the leading term of $\Ui$ on $\partial \Dsf$. Thus, we can apply Hölder's inequality, Lemma~\ref{lma:remainder_est}, item (iii) with $m=d$ and the scaled trace inequality to conclude
\begin{equation}
\left|\eps^{-1}\int_{\Gamma_\eps^N} [\VC_2 \epsb(\Ui)-\eps^d\VC_2 \epsb(\Ri)(\eps x)]n\cdot\varphi \;dS\right|\le C\eps^{\frac{d}{2}}\|\varphi\|_\eps,
\end{equation}
for a constant $C>0$.

\item Similarly, we deduce from Lemma~\ref{lma:remainder_est}, item (iii) with $m=d-1$ that there is a constant $C>0$, such that
\begin{equation}
\left|\int_{\Gamma_\eps^N} [\VC_2 \epsb(\Uii)-\eps^{d-1}\VC_2 \epsb(\Rii)(\eps x)]n\cdot\varphi \;dS\right|\le C\eps^{\frac{d}{2}}\|\varphi\|_\eps.
\end{equation}

\end{itemize}
Combining the previous estimates yields
\begin{equation}\label{eq:exp2_F_est}
\|F_\eps\|\le 
\begin{cases}
    C\eps\quad&\text{ for }d=3,\\
C\eps^{1-\alpha}\quad&\text{ for }d=2,
\end{cases}
\end{equation}
for a constant $C>0$.
Finally, we recall that $g_\eps$ is defined in \eqref{eq:exp2_g}. At this point we choose the constant $c\in\VR$, such that
\[
    \Rii(x)=\Rii(\eps x)+c\ln(\eps)\quad \text{ in }d=2.
\]
Then, by Lemma~\ref{lma:remainder_est}, item (i), (ii) with $m=d$ and $m=d-1$ respectively, there is a constant $C>0$, such that

\begin{equation}\label{eq:exp2_g_est}
\eps^{\frac{1}{2}}\|g_\eps\|_{L_2(\Gamma_\eps)^d}+|g_\eps|_{H^{\frac{1}{2}}(\Gamma_\eps)^d}\le C \eps^{\frac{d}{2}}.
\end{equation}
Now we can plug \eqref{eq:exp2_F_est} and \eqref{eq:exp2_g_est} into the apriori estimate \eqref{eq:apriori_U2}, which shows \eqref{eq:res_U2}.\newline
ad (ii): By the triangle inequality, we have
\begin{align}
\begin{split}
    \|\eps^{-1}(\nabla(\Ui_\eps)-\nabla(\Ui))-&\nabla(\Uii)\|_{L_2(\omega)^{d\times d}} \\
    \le &  \|\nabla(\Uii_\eps-\Uii-\eps^{d-2}\vii\circ \Phi_\eps)\|_{L_2(\omega)^{d\times d}}\\
&+\eps^{d-1}\|\nabla(\vi)\circ \Phi_\eps\|_{L_2(\omega)^{d\times d}}+\eps^{d-1}\|\nabla(\vii)\circ \Phi_\eps\|_{L_2(\omega)^{d\times d}}\\
    \le &  C\eps^{1-\alpha},
\end{split}
\end{align}
for a positive constant $C$. This shows (ii) and therefore finishes the proof.

\end{proof}

\begin{remark}\label{rem:U2}
Note that by the triangle inequality one has
\begin{align*}
    \|\eps^{-1}(\Ui_\eps-\Ui)-\Uii\|_\eps  \le & \|\Uii_\eps-\Uii-\eps^{d-2}\vii\circ \Phi_\eps(x)\|_\eps+\|\eps^{d-2}\vi\circ \Phi_\eps(x)\|_\eps \\
                                              & +\|\eps^{d-2}\vii\circ \Phi_\eps(x)\|_\eps \\
                                          \le & C(\eps+\eps^{\frac{1}{2}}),
\end{align*}
for $d=3$ and a constant $C>0$.
Thus in dimension $d=3$, the correction of $\vii$ is not necessary to achieve convergence of $\Uii_\eps$. In fact, sparing the corrector results in a slower convergence of order $\eps^{\frac{1}{2}}$ compared to the corrected order $\eps$.
\end{remark}

\begin{remark}
In order to give a better understanding of the scheme of the asymptotic expansion, we would like to point out the main difference between the first and second order expansion, which is the slower decay of the boundary layer corrector $\Uii$ compared to $\Ui$. As a result, there was no necessity to introduce the regular corrector $\vi$ in the first order expansion, whereas $\vii$ was needed to obtain the desired order of at least $\eps^{1-\alpha}$, for $\alpha>0$ small.\newline
Additionally, one should note that boundary layer correctors appearing in higher order expansions have asymptotics similar to $\Uii$ and therefore demand a correction of the associated regular correctors. Thus, the scheme of the asymptotic expansion of arbitrary order resembles the second order expansion given in this chapter, rather than the first order expansion.
\end{remark}


\section{Analysis of the perturbed adjoint equation}\label{sec:sens_adjoint}
In this section we study the asymptotic analysis of the Amstutz' adjoint equation and 
the averaged adjoint equation for our elasticity model problem. We shall first exam 
Amstutz' adjoint and derive its asymptotic expansion up to order two. 

\subsection{Amstutz' adjoint equation}\label{sec:asymptotic_am}
The adjoint state $p_\eps$ ,$\eps\ge0$ satisfies
\begin{equation}
  p_\eps\in H^1_\Gamma(\Dsf)^d,\quad  \partial_u G(\eps,u_0,p_\eps)(\varphi) =0 \quad \text{ for all } \varphi \in H^1_\Gamma(\Dsf)^d.
\end{equation}
With the cost function defined in \eqref{eq:cost_func} this equation reads explicitly
\begin{align}\label{eq:adj_per}
\begin{split}
    \int_\Dsf \VC_{\omega_\eps} \epsb(\varphi):\epsb(p_\eps) \;dx= & -\gamma_f\int_\Dsf f_{\omega_\eps}\cdot\varphi\;dx-2\gamma_m\int_{\Gamma^m}(u_0-u_m)\cdot\varphi\;dS \\
                                                                   & -2\gamma_g\int_{\Dsf}[\nabla u_0-\nabla u_d]:\nabla\varphi\; dx,
\end{split}
\end{align}
for all $\varphi\in H^1_\Gamma(\Dsf)^d$. Similarly, the unperturbed adjoint equation reads
\begin{align}\label{eq:adj_unper}
\begin{split}
    \int_\Dsf \VC_{2} \epsb(\varphi):\epsb(p_0) \;dx=& -\gamma_f\int_\Dsf f_{2}\cdot\varphi\;dx-2\gamma_m\int_{\Gamma^m}(u_0-u_m)\cdot\varphi\;dS \\
                                                    & -2\gamma_g\int_{\Dsf}[\nabla u_0-\nabla u_d]:\nabla\varphi\; dx,
\end{split}
\end{align}
for all $\varphi\in H^1_\Gamma(\Dsf)^d$.
Note that the $\eps$ dependence of $p_\eps$ is only via the coefficients $\VC_{\omega_\eps}$ and $f_{\omega_\eps}$. This is a definite advantage over 
the averaged adjoint method, where also the perturbed state variable $u_\eps$ appears.

We now compute an asymptotic expansion of $p_\eps$ in a similar fashion to the direct state $u_\eps$. Therefore we define the variation of the adjoint state $P^{(i)}_\eps$ for $i\ge1$ in analogy to the definition of the variation of the direct state (Defintion \ref{def:variation_state}), where we replace the boundary layer correctors $U^{(i)}$ by similar correctors $P^{(i)}$ adapted to the new inhomogeneity and the regular correctors $u^{(i)}$ are replaced by correctors $p^{(i)}$ matching $P^{(i)}$.
\begin{lemma}\label{lma:P1_rep}
There exists a solution $[P]\in\dot{BL}(\VR^d)^d$ to
\begin{equation}\label{eq:first_P}
    \int_{\VR^d}\VC_\omega \epsb(\varphi):\epsb([P])\;dx=\int_\omega(\VC_2-\VC_1)\epsb(\varphi):\epsb(p_0)(x_0)\;dx,
\end{equation}
for all $\varphi\in \dot{BL}(\VR^d)^d$.
Moreover, there exists a representative $\PPi \in [P]$, which satisfies pointwise for $|x|\rightarrow \infty$:
\begin{equation}\label{eq:P1_def}
\PPi(x)=\Si(x)+\mathcal{O}(|x|^{-d}),
\end{equation}
where $\Si:\VR^d\rightarrow \VR^d$ satisfies
\begin{equation}
|\Si(x)|=\begin{cases}
    b_2 |x|^{-1} \quad &\text{ for }d=2,\\
    b_3 |x|^{-2} \quad &\text{ for }d=3,
\end{cases}
\end{equation}
for some constants $b_2,b_3\in \VR$.
\end{lemma}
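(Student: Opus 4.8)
The plan is to follow the proof of Lemma~\ref{lma:U1_rep} almost verbatim, since the adjoint corrector equation \eqref{eq:first_P} is structurally identical to \eqref{eq:first_U}: using the major symmetry of the elasticity tensors $\VC_1,\VC_2$ one has $(\VC_2-\VC_1)\epsb(\varphi):\epsb(p_0)(x_0)=(\VC_2-\VC_1)\epsb(p_0)(x_0):\epsb(\varphi)$, so \eqref{eq:first_P} is precisely \eqref{eq:first_U} with $u_0$ replaced by $p_0$.

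First I would establish existence of $[P]\in\dot{BL}(\VR^d)^d$ (uniqueness follows identically) by the Lax--Milgram theorem: the bilinear form $([\varphi],[\psi])\mapsto\int_{\VR^d}\VC_\omega\epsb(\varphi):\epsb(\psi)\,dx$ is continuous and coercive on $\dot{BL}(\VR^d)^d$ by the uniform positive-definiteness of $\VC_\omega$ together with Korn's inequality on the Beppo--Levi space, while $\varphi\mapsto\int_\omega(\VC_2-\VC_1)\epsb(p_0)(x_0):\epsb(\varphi)\,dx$ is a bounded linear functional because $\epsb(p_0)(x_0)$ is a fixed matrix and $\omega$ is bounded.

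Next, for the decay statement, I would identify a representative $\PPi$ with the solution of the transmission problem
\begin{align*}
-\Div(\VC_1\epsb(\PPi))&=0\quad &\text{ in }\omega,\\
-\Div(\VC_2\epsb(\PPi))&=0\quad &\text{ in }\bar{\omega}^{\mathsf{c}},\\
[\PPi]^+&=[\PPi]^-\quad &\text{ on }\partial\omega,\\
[\VC_1\epsb(\PPi)n]^+-[\VC_2\epsb(\PPi)n]^- &=(\VC_2-\VC_1)\epsb(p_0)(x_0)n\quad &\text{ on }\partial\omega,
\end{align*}
and then, by \cite[p.76, Thm.~3.3.8]{b_AM_2008a}, pick densities $f,g\in L_2(\partial\omega)^d$ so that the pair of single layer potentials $\mathcal{S}^1_\omega f$ (inside $\omega$) and $\mathcal{S}^2_\omega g$ (in $\bar{\omega}^{\mathsf{c}}$) realises these jump conditions; gluing them defines $\PPi$. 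Since $\epsb(p_0)(x_0)$ is constant, the divergence theorem gives $\int_{\partial\omega}(\VC_2-\VC_1)\epsb(p_0)(x_0)n\,dS=0$, which forces the vanishing of the zeroth moment, $\int_{\partial\omega}g\,dS=0$. A Taylor expansion of $\Gamma_2(x-y)$ about $y=0$ in $\mathcal{S}^2_\omega g(x)=\int_{\partial\omega}\Gamma_2(x-y)g(y)\,dS(y)$ then removes the leading $\Gamma_2(x)$ contribution and yields $\PPi(x)=\Si(x)+\mathcal{O}(|x|^{-d})$ with $\Si$ decaying like $|x|^{-1}$ for $d=2$ and $|x|^{-2}$ for $d=3$.

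I do not expect a genuine obstacle here; the only points requiring care are invoking the symmetry of $\VC_1,\VC_2$ so that the transmission problem for $\PPi$ literally matches the one for $\Ui$ in Lemma~\ref{lma:U1_rep}, and verifying that $g$ has vanishing mean, which is exactly what upgrades the generic single-layer decay ($\ln|x|$ in $d=2$, $|x|^{2-d}$ in $d=3$) by one order to the stated rate. Everything else is a direct transcription of the state-corrector argument.
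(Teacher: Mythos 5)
Your proposal is correct and follows essentially the same route as the paper: reduce \eqref{eq:first_P} to the structure of \eqref{eq:first_U} and repeat the single-layer-potential construction of Lemma~\ref{lma:U1_rep}, including the vanishing-mean observation for $g$ that upgrades the generic decay by one order. The one divergence is that you invoke major symmetry of $\VC_1,\VC_2$ to make the two equations literally identical, whereas the paper --- which only assumes $\VC_1,\VC_2$ are linear maps --- rewrites both sides with the adjoint tensors $\VC_\omega^\top$ and $\VC_i^\top$ and runs the same argument for the transposed transmission problem; your shortcut is harmless for genuine elasticity tensors but silently adds a symmetry hypothesis the paper's argument avoids.
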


\begin{proof}
Using the adjoint tensor $\VC^\top_\omega:\VR^{d\times d}\rightarrow \VR^{d\times d}$, we can rewrite \eqref{eq:first_P} to get
\begin{equation}\label{eq:first_P_adj}
    \int_{\VR^d}\epsb(\varphi):\VC_\omega^\top\epsb([P])\;dx=\int_\omega\epsb(\varphi):(\VC_2^\top-\VC_1^\top)\epsb(p_0)(x_0)\;dx.
\end{equation}
Thus, using single layer potentials, the proof follows the lines of Lemma \ref{lma:U1_rep}.
\end{proof}

\begin{theorem}\label{thm:main_P1}
For $\alpha \in (0,1)$ and $\eps>0$ sufficiently small there is a constant $C>0$, such that
\begin{align}\label{eq:exp1_est_adj}
\|P_\eps^{(1)}-P^{(1)}\|_\eps\le
\begin{cases}
 C\eps \quad &\text{ for }d=3,\\
C\eps^{1-\alpha} \quad &\text{ for }d=2.
\end{cases}
\end{align}
\end{theorem}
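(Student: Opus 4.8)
The plan is to follow the proof of Theorem~\ref{thm:main_U1} essentially verbatim, replacing the state data by the adjoint data, and to check that every scaling estimate carries over unchanged. First I would subtract the unperturbed adjoint equation \eqref{eq:adj_unper} from the perturbed one \eqref{eq:adj_per}. Since the tracking contributions $\int_{\Gamma^m}(u_0-u_m)\cdot\varphi\,dS$ and $\int_\Dsf[\nabla u_0-\nabla u_d]:\nabla\varphi\,dx$ do not depend on $\eps$, they cancel, and one obtains, in complete analogy with \eqref{eq:diff_fixed},
\begin{equation*}
\int_\Dsf \VC_{\omega_\eps}\epsb(\varphi):\epsb(p_\eps-p_0)\,dx=\int_{\omega_\eps}(\VC_2-\VC_1)\epsb(\varphi):\epsb(p_0)\,dx-\gamma_f\int_{\omega_\eps}(f_1-f_2)\cdot\varphi\,dx
\end{equation*}
for all $\varphi\in H^1_\Gamma(\Dsf)^d$. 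The change of variables $\Phi_\eps$ turns this, as in \eqref{eq:diff_scaled}, into an equation for $P^{(1)}_\eps$ on $\Dsf_\eps$, with the force term picking up an extra factor $\eps$.

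Next I would take the defining equation \eqref{eq:first_P} of the corrector $P^{(1)}$, split the integral over $\Dsf_\eps$ and $\VR^d\setminus\Dsf_\eps$, and integrate by parts in the exterior using $\Div(\VC_2^{\top}\epsb(P^{(1)}))=0$ in $\bar{\omega}^{\mathsf{c}}$, exactly as in \eqref{eq:first_U_bounded}. Subtracting from the scaled identity for $P^{(1)}_\eps$ produces the analogue of \eqref{eq:diffdiff_scaled}, whose right-hand side consists of the term $\int_\omega(\VC_2-\VC_1)\epsb(\varphi):[\epsb(p_0)\circ\Phi_\eps-\epsb(p_0)(x_0)]\,dx$, the term $-\eps\gamma_f\int_\omega(f_1-f_2)\circ\Phi_\eps\cdot\varphi\,dx$, and the boundary term $-\int_{\Gamma_\eps^N}\VC_2^{\top}\epsb(P^{(1)})n\cdot\varphi\,dS$. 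Writing the left-hand side with the adjoint tensor in the form $\int_{\Dsf_\eps}\VC^{\top}_\omega\epsb(P^{(1)}_\eps-P^{(1)}):\epsb(\varphi)\,dx$, Lemma~\ref{lma:aux} applies with $A=\VC^{\top}_\omega$ (uniformly positive definite since $\VC_1,\VC_2$ are coercive), $V_\eps=P^{(1)}_\eps-P^{(1)}$, $g_\eps=-P^{(1)}|_{\Gamma_\eps}$, and $F_\eps$ the above functional, which yields $\|P^{(1)}_\eps-P^{(1)}\|_\eps\le C(\|F_\eps\|+\eps^{1/2}\|P^{(1)}\|_{L_2(\Gamma_\eps)^d}+|P^{(1)}|_{H^{1/2}(\Gamma_\eps)^d})$.

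The remaining estimates are identical to those in the proof of Theorem~\ref{thm:main_U1}, using that by Lemma~\ref{lma:P1_rep} the representative $\PPi$ has the same decay $\mathcal{O}(|x|^{-(d-1)})$ (with $\nabla\PPi=\mathcal{O}(|x|^{-d})$) as $\Ui$: the first term of $F_\eps$ is $\mathcal{O}(\eps)$ because $p_0\in C^3(B_\delta(x_0))$; the force term is $\mathcal{O}(\eps)$ for $d=3$ by Lemma~\ref{lma:scaling2}(b) and $\mathcal{O}(\eps^{1-\alpha})$ for $d=2$ by Lemma~\ref{lma:scaling2}(c); the boundary term is $\mathcal{O}(\eps^{d/2})$ by Lemma~\ref{lma:remainder_est}(iii) with $m=d-1$ together with the scaled trace inequality Lemma~\ref{lma:scaling2}(d); and $\eps^{1/2}\|P^{(1)}\|_{L_2(\Gamma_\eps)^d}+|P^{(1)}|_{H^{1/2}(\Gamma_\eps)^d}\le C\eps^{d/2}$ by Lemma~\ref{lma:remainder_est}(i),(ii) with $m=d-1$. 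Hence $\|F_\eps\|\le C\eps$ for $d=3$ and $\|F_\eps\|\le C\eps^{1-\alpha}$ for $d=2$, which gives the claimed bound.

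The one point deserving attention — and the only genuinely new ingredient — is the interior regularity $p_0\in C^3(B_\delta(x_0))$, which is not among the standing hypotheses. I would obtain it from interior elliptic regularity for \eqref{eq:adj_unper}: near $x_0$ the coefficient tensor is the constant $\VC_2$ and the data involve $f_2\in C^2(B_\delta(x_0))$ and $u_0\in C^3(B_\delta(x_0))$ (Assumption~\ref{ass:regularity}); in $d=3$, where $\gamma_g$ may be nonzero, this requires in addition that $u_d$ be smooth near $x_0$, so the cleanest course is simply to postulate $p_0\in C^3(B_\delta(x_0))$ as the adjoint analogue of Assumption~\ref{ass:regularity}. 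Apart from this, and the purely cosmetic bookkeeping of the transpose $\VC^{\top}_\omega$ and the constant $\gamma_f$, the proof is a mechanical transcription of the state-equation argument; the main obstacle is therefore only to confirm that $\PPi$ and $\Ui$ share the same far-field asymptotics, so that Lemma~\ref{lma:remainder_est} is invoked with the same exponent $m=d-1$.
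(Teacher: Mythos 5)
Your proposal follows the paper's proof essentially verbatim: subtract the perturbed and unperturbed adjoint equations (the tracking terms cancel), rescale, subtract the exterior-integrated corrector equation, apply Lemma~\ref{lma:aux} with $A=\VC_\omega^\top$, and estimate the three pieces of the right-hand side and the boundary traces of $\PPi$ exactly as the paper does in Section~\ref{sec:ad_P1}. Your remark that the Taylor expansion of $\epsb(p_0)$ at $x_0$ tacitly requires an adjoint analogue of Assumption~\ref{ass:regularity} is a fair observation about a hypothesis the paper uses without stating, but it does not change the argument.
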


\begin{proof}
Similarly to the analysis of the direct state, we derive an equation of the form
\begin{align}\label{eq:P1_rhs}
    \int_{\Dsf_\eps} \VC_{\omega}\epsb(\varphi):\epsb(\PPi_\eps-\PPi)\; dx=G^1_\eps(\varphi) \quad \text{ for all } \varphi\in H^1_{\Gamma_\eps}(\Dsf_\eps)^d,
\end{align}
where the right hand side satisfies
\begin{equation}\label{eq:F_est_adj}
\|G^1_\eps\|\le
\begin{cases}
C \eps\quad &\text{ for }d=3,\\
C\eps^{1-\alpha}\quad &\text{ for }d=2.
\end{cases}
\end{equation}
A detailed derivation and estimation of the functional $G^1_\eps$ can be found in the Appendix (Section \ref{sec:ad_P1}).
In view of Lemma~\ref{lma:aux}, we now estimate the boundary integral terms. Since $\PPi_\eps|_{\Gamma_\eps}=0$ we follow from Lemma \ref{lma:remainder_est} item (i), (ii) with $m=d-1$ that there is a constant $C>0$, such that

\begin{equation}\label{eq:g_est_adj}
\eps^{\frac{1}{2}}\|\PPi_\eps-\PPi\|_{L_2(\Gamma_\eps)^d}+|\PPi_\eps-\PPi|_{H^{\frac{1}{2}}(\Gamma_\eps)^d}\le C\eps^{\frac{d}{2}}.
\end{equation}
Thus, considering \eqref{eq:F_est_adj} and \eqref{eq:g_est_adj}, an application of Lemma \ref{lma:aux} with $A=\VC_\omega^\top$ shows \eqref{eq:exp1_est_adj}, which finishes our proof.

\end{proof}

We now continue with the second order expansion. Similar to the state variable expansion, we therefore introduce a number of correctors in the following Lemma, which approximate the first order expansion inside $\omega_\eps$ and on the boundary $\partial \Dsf$ respectively.

\begin{lemma}\label{lma:P2_def}
\begin{itemize}

\item There is a unique solution $\wi\in H^1(\Dsf)^d$ with $p^{(1)}(x)=-S^{(1)}(x-x_0)$ on $\Gamma$, such that
\begin{equation}\label{eq:wi}
\int_\Dsf \VC_2 \epsb(\varphi):\epsb(\wi)=-\int_{\Gamma^N}\VC_2^\top \epsb(\Si)(x-x_0)n \cdot\varphi\; dS,
\end{equation}
for all $\varphi \in H^1_\Gamma(\Dsf)^d$.

\item There is a solution $[P]\in\dot{BL}_p(\VR^d)^d$, to
\begin{align}\label{eq:Piih}
\begin{split}
\int_{\VR^d}\VC_\omega\epsb(\varphi):\epsb([P])\; dx&=\int_\omega (\VC_2-\VC_1)\epsb(\varphi):[\nabla\epsb(p_0)(x_0)x]\; dx,
\end{split}
\end{align}
for all $\varphi \in \dot{BL}_{p^\prime}(\VR^d)^d$, where 
\[
    p=\begin{cases}
     2+\delta\quad &\text{ for }d=2,\\ 
  2\quad &\text{ for }d=3,
  \end{cases}
\] 
  and $\delta>0$ small. Moreover, there exists a representative $\hat{P}^{(2)} \in [P]$, which satisfies pointwise for $|x|\rightarrow \infty$:
\begin{equation}
\hat{P}^{(2)}(x)=\hat{S}^{(2)}(x)+\mathcal{O}(|x|^{1-d}),
\end{equation}
where $\hat{S}^{(2)}:\VR^d\rightarrow \VR^d$ satisfies
\begin{equation}
|\hat{S}^{(2)}(x)|=\begin{cases}
 \hat{c}_2 \ln(|x|) \quad &\text{ for }d=2,\\
\hat{c}_3 |x|^{-1} \quad &\text{ for }d=3,
\end{cases}
\end{equation}
for some constants $\hat{c}_2,\hat{c}_3\in \VR$.

\item There is a solution $[P]\in\dot{BL}_p(\VR^d)^d$ to
\begin{equation}\label{eq:Piit}
\int_{\VR^d}\VC_\omega\epsb(\varphi):\epsb([P])\; dx=\gamma_f\int_\omega [f_2(x_0)-f_1(x_0)]\cdot\varphi\; dx,
\end{equation}
for all $\varphi \in C^1_c(\VR^d)^d$, where 
\[
    p=\begin{cases}
    2+\delta\quad &\text{ for }d=2,\\ 
2\quad &\text{ for }d=3,
\end{cases}
\] 
and $\delta>0$ small. Moreover, there exists a representative $\tilde{P}^{(2)} \in [P]$, which satisfies pointwise for $|x|\rightarrow \infty$:
\begin{equation}
\tilde{P}^{(2)}(x)=\tilde{S}^{(2)}(x)+\mathcal{O}(|x|^{1-d}),
\end{equation}
where $\tilde{S}^{(2)}:\VR^d\rightarrow \VR^d$ satisfies
\begin{equation}
|\tilde{S}^{(2)}(x)|=\begin{cases}
 \tilde{c}_2 \ln(|x|) \quad &\text{ for }d=2,\\
\tilde{c}_3 |x|^{-1} \quad &\text{ for }d=3,
\end{cases}
\end{equation}
for some constants $\tilde{c}_2,\tilde{c}_3\in \VR$.

\item There is a unique solution $\wii\in H^1(\Dsf)^d$ with $\wii(x)=-\Sii(x-x_0)$ on $\Gamma$, such that
\begin{equation}\label{eq:wii}
\int_\Dsf \VC_2 \epsb(\varphi):\epsb(\wii)=-\int_{\Gamma^N}\VC_2^\top \eps(\Sii)(x-x_0)n\cdot\varphi\; dS
\end{equation}
for all $\varphi \in H^1_\Gamma(\Dsf)^d$, where $\Sii:=\hat{S}^{(2)}+\tilde{S}^{(2)}$.
\end{itemize}
\end{lemma}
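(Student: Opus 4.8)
The plan is to follow the strategy of the proof of Lemma~\ref{lma:U2_def} for the state correctors, adapting each step to the adjoint bilinear form: wherever a layer or volume potential was built from the fundamental solution of $-\Div(\VC_i\epsb(\cdot))$, one now uses the fundamental solution of the transposed operator $-\Div(\VC_i^\top\epsb(\cdot))$, exactly as in the proof of Lemma~\ref{lma:P1_rep} via the rewriting \eqref{eq:first_P_adj}.

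First, the regular correctors $\wi$ and $\wii$. Each solves a variational equation on $H^1_\Gamma(\Dsf)^d$ with an inhomogeneous Dirichlet datum on $\Gamma$ (namely $-\Si(\cdot-x_0)$, and $-\Sii(\cdot-x_0)$) and a right-hand side that is a bounded linear functional on $H^1_\Gamma(\Dsf)^d$, the surface integral over $\Gamma^N$. Lifting the Dirichlet datum by a continuous extension operator and invoking Korn's inequality together with the Lax--Milgram theorem yields existence and uniqueness; this is precisely the argument already used for \eqref{eq:vi} and \eqref{eq:vii}, and it carries over verbatim to \eqref{eq:wi} and \eqref{eq:wii}.

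Next, the exterior correctors $\hat P^{(2)}$ and $\tilde P^{(2)}$. I would first pass to the transposed weak formulation of \eqref{eq:Piih} and \eqref{eq:Piit} and record the equivalent strong transmission problem; for \eqref{eq:Piit} this reads $-\Div(\VC_1^\top\epsb(P))=\gamma_f[f_2(x_0)-f_1(x_0)]$ in $\omega$, $-\Div(\VC_2^\top\epsb(P))=0$ in $\bar{\omega}^{\mathsf{c}}$, with continuity of $P$ and of the transposed conormal derivative across $\partial\omega$, while for \eqref{eq:Piih} the interior source is replaced by the linear term generated by $\nabla\epsb(p_0)(x_0)x$ and an additional conormal jump appears on $\partial\omega$, mirroring the difference between $\tilde U^{(2)}$ and $\hat U^{(2)}$. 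Then I mimic the construction of $\tilde U^{(2)}$: absorb the interior source by the volume potential built from the fundamental solution of $-\Div(\VC_1^\top\epsb(\cdot))$, and remove the resulting trace and conormal-trace mismatch on $\partial\omega$ with single layer potentials $\mathcal{S}^1_\omega f$, $\mathcal{S}^2_\omega g$, where $f,g\in L_2(\partial\omega)^d$ are supplied by \cite[p.76, Thm. 3.3.8]{b_AM_2008a} applied with the transposed tensors. Glueing the pieces as in Lemma~\ref{lma:U2_def} produces a solution, and a Taylor expansion about the origin of the exterior single layer potential $\mathcal{S}^2_\omega g$ gives the stated far-field behaviour: the leading profile is the fundamental solution of the exterior operator, that is $\mathcal{O}(|x|^{-1})$ in $d=3$ (hence $\hat c_3|x|^{-1}$, $\tilde c_3|x|^{-1}$) and $\mathcal{O}(\ln|x|)$ in $d=2$ (hence $\hat c_2\ln|x|$, $\tilde c_2\ln|x|$), with remainder $\mathcal{O}(|x|^{1-d})$. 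The membership $[P]\in\dot{BL}_p(\VR^d)^d$ for the indicated $p$, and in particular the necessity of $p=2+\delta>2$ in dimension two, is then read off from these asymptotics, since the gradient of $\ln|x|$ decays like $|x|^{-1}$ and lies in $L_p(\VR^2)^{2\times 2}$ exactly when $p>2$, as flagged in the remark following Lemma~\ref{lma:U2_def}.

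The main obstacle is the two-dimensional case of the exterior problems \eqref{eq:Piih}, \eqref{eq:Piit}: one cannot invoke Lax--Milgram on $\dot{BL}(\VR^2)^2$, because the genuine solution carries a logarithmically growing leading term whose gradient is not square-integrable at infinity, which is why these problems are posed in duality against $\dot{BL}_{p^\prime}$-test functions and must be solved by the explicit potential-theoretic construction. The technical heart is thus to verify that the glued function satisfies the weak identity against all admissible test functions, and to extract from the Taylor expansion of the layer potentials both the sharp logarithmic/inverse-power leading profile and the $\mathcal{O}(|x|^{1-d})$ decay of the remainder; once this is done, the correctors $\wi$, $\wii$ are routine and the $d=3$ statements are immediate.
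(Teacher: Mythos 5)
Your proposal is correct and takes essentially the same route as the paper: the paper's own proof is just the one-line reduction ``rewrite with the adjoint tensor $\VC_\omega^\top$ and argue as in Lemma~\ref{lma:U2_def}'', and your argument---Lax--Milgram for $\wi,\wii$, volume and single layer potentials built from the fundamental solution of the transposed operator for $\hat{P}^{(2)},\tilde{P}^{(2)}$, and the Taylor expansion of the exterior layer potential for the far-field profile and the $p>2$ requirement in $d=2$---is precisely that reduction carried out in detail. No gaps.
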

\begin{proof}
Rewriting these equations with the help of the adjoint operator $\VC_\omega^\top$ leads to a proof similar to Lemma \ref{lma:U2_def}.
\end{proof}

Now we are able to state our main result regarding the second order expansion of the adjoint state variable $p_\eps$:

\begin{theorem}\label{thm:main_P2}
\begin{itemize}
\item[(i)] There exists a constant $C>0$, such that
\begin{align}\label{eq:P2_diff_est}
\|\Pii_\eps-\Pii-\eps^{d-2}\wii\circ \Phi_\eps\|_\eps&\le C\eps \quad &\text{ for }d=3,\\
\|\Pii_\eps-\Pii-\eps^{d-2}\wii\circ \Phi_\eps+c\ln(\eps)\|_\eps&\le C\eps^{1-\alpha} \quad &\text{ for }d=2, 
\end{align}
for $\eps$ sufficiently small and a suitable constant $c\in\VR$.
\item[(ii)] For $d\in \{2,3\}$, there holds $\displaystyle\lim_{\eps\searrow 0}\|\eps^{-1}(\nabla(\PPi_\eps)-\nabla(\PPi))-\nabla(\Pii)\|_{L_2(\omega)^{d\times d}}=0$.
\end{itemize}
\end{theorem}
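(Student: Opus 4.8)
\noindent The plan is to transcribe the proof of Theorem~\ref{thm:U2_main} line by line, replacing $\VC_\omega$ by the transposed tensor $\VC_\omega^\top$ (still uniformly positive definite in the sense of the bilinear form, since $\VC_\omega^\top\epsb(v):\epsb(v)=\VC_\omega\epsb(v):\epsb(v)$) and the state correctors $\Ui,\Ri,\vi,\Uii,\Rii,\vii$ by the adjoint correctors $\PPi,\Si,\wi,\Pii,\Sii,\wii$ of Lemmas~\ref{lma:P1_rep} and~\ref{lma:P2_def}. As already exploited in the proof of Theorem~\ref{thm:main_P1}, the $\eps$--dependence of $p_\eps$ enters only through $\VC_{\omega_\eps}$ and $f_{\omega_\eps}$: subtracting \eqref{eq:adj_unper} from \eqref{eq:adj_per} the $\gamma_m$-- and $\gamma_g$--terms cancel identically, leaving exactly the structure of \eqref{eq:diff_fixed}, namely $\int_\Dsf\VC_{\omega_\eps}\epsb(\varphi):\epsb(p_\eps-p_0)\,dx=\int_{\omega_\eps}(\VC_2-\VC_1)\epsb(\varphi):\epsb(p_0)\,dx+\gamma_f\int_{\omega_\eps}(f_2-f_1)\cdot\varphi\,dx$.

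\noindent \emph{ad (i).} First I would derive the variational equation for $\eps\Piii_\eps=\Pii_\eps-\Pii-\eps^{d-2}\wii\circ\Phi_\eps$. Changing variables via $\Phi_\eps$ in the above difference, dividing the identity for $\Pii_\eps-\Pii$ by $\eps$, then changing variables in \eqref{eq:wi},\eqref{eq:wii} and integrating by parts in the exterior domain in \eqref{eq:Piih},\eqref{eq:Piit} yields
\begin{equation*}
\int_{\Dsf_\eps}\VC_\omega\epsb(\varphi):\epsb(\eps\Piii_\eps)\;dx=G_\eps^2(\varphi)+G_\eps^3(\varphi),\qquad \varphi\in H^1_{\Gamma_\eps}(\Dsf_\eps)^d,
\end{equation*}
where $G_\eps^2$ collects the interior contributions (a first--order Taylor remainder of $\gamma_f f_{\omega_\eps}$, the second--order Taylor remainder $\eps^{-1}(\epsb(p_0)\circ\Phi_\eps-\epsb(p_0)(x_0))-\nabla\epsb(p_0)(x_0)x$ produced by the transmission term $(\VC_2-\VC_1)\epsb(\cdot):\epsb(p_0)$, and the $\eps^{d-1}$--order terms carrying $\epsb(\wi\circ\Phi_\eps)$ and $\epsb(\wii\circ\Phi_\eps)$) and $G_\eps^3$ collects the two boundary integrals on $\Gamma_\eps^N$ attached to $\VC_2^\top[\epsb(\PPi)-\eps^d\epsb(\Si)(\eps x)]$ and $\VC_2^\top[\epsb(\Pii)-\eps^{d-1}\epsb(\Sii)(\eps x)]$. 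Since the bilinear form sees only the symmetrised gradient, $\eps\Piii_\eps+c\ln(\eps)$ solves the same equation for any $c\in\VR$.

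\noindent Next I would apply Lemma~\ref{lma:aux} with $A=\VC_\omega^\top$, $F_\eps=G_\eps^2+G_\eps^3$, $V_\eps=\eps\Piii_\eps$ (resp. $V_\eps=\eps\Piii_\eps+c\ln(\eps)$ for $d=2$, with $c$ such that $\Sii(x)=\Sii(\eps x)+c\ln(\eps)$), and Dirichlet datum $g_\eps$ equal to the restriction to $\Gamma_\eps$ of $(\eps^{d-2}\Si(\eps x)-\eps^{-1}\PPi)+(\eps^{d-2}\Sii(\eps x)-\Pii)$ (using $\Pii_\eps|_{\Gamma_\eps}=0$ since $p_\eps\in H^1_\Gamma(\Dsf)^d$, shifted by $c\ln(\eps)$ for $d=2$). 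This gives the a priori estimate \eqref{eq:apriori_U2} with $G_\eps^j$ in place of $F_\eps^j$, and the individual bounds follow exactly as in Theorem~\ref{thm:U2_main}: Taylor's formula together with Lemma~\ref{lma:scaling2}(b),(c) gives $\|G_\eps^2\|\le C\eps$ for $d=3$ and $\le C\eps^{1-\alpha}$ for $d=2$; Lemma~\ref{lma:remainder_est}(iii) with $m=d$ (after cancelling the leading term $\Si$ of $\PPi$ on $\partial\Dsf$) and with $m=d-1$, together with the scaled trace inequality Lemma~\ref{lma:scaling2}(d), gives $\|G_\eps^3\|\le C\eps^{d/2}$; and Lemma~\ref{lma:remainder_est}(i),(ii) with $m=d$ and $m=d-1$ give $\eps^{1/2}\|g_\eps\|_{L_2(\Gamma_\eps)^d}+|g_\eps|_{H^{1/2}(\Gamma_\eps)^d}\le C\eps^{d/2}$. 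Inserting these into \eqref{eq:apriori_U2} proves \eqref{eq:P2_diff_est}; the detailed construction and estimation of $G_\eps^2,G_\eps^3$ I would relegate to the appendix, as was done for $G_\eps^1$. Part (ii) is then obtained verbatim as in Theorem~\ref{thm:U2_main}(ii): by the triangle inequality
\begin{align*}
\|\eps^{-1}(\nabla\PPi_\eps-\nabla\PPi)-\nabla\Pii\|_{L_2(\omega)^{d\times d}}\le{}&\|\nabla(\Pii_\eps-\Pii-\eps^{d-2}\wii\circ\Phi_\eps)\|_{L_2(\omega)^{d\times d}}\\
&+\eps^{d-1}\|\nabla\wi\circ\Phi_\eps\|_{L_2(\omega)^{d\times d}}+\eps^{d-1}\|\nabla\wii\circ\Phi_\eps\|_{L_2(\omega)^{d\times d}},
\end{align*}
and each term on the right tends to $0$ by part (i) and the regularity of $\wi,\wii$.

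\noindent \textbf{Main obstacle.} Beyond the bookkeeping needed to assemble and split $G_\eps^2,G_\eps^3$, the genuine point requiring care is the regularity of the unperturbed adjoint $p_0$ near $x_0$: the second--order Taylor expansion of $\epsb(p_0)$ inside $\omega_\eps$ needs $p_0\in C^3(B_\delta(x_0))$, a counterpart of Assumption~\ref{ass:regularity}. I would obtain it from interior elliptic regularity for \eqref{eq:adj_unper}, whose data is smooth near $x_0$ ($f_2$ and $\nabla u_0$ by the hypotheses on $f_2$ and by Assumption~\ref{ass:regularity}, while the $\gamma_m$--term is supported on $\Gamma^m$ away from $x_0$, and in $d=2$ the tracking terms are switched off). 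Once this regularity is recorded, every remaining step is the transcription of Section~\ref{sec:sens_an_elasticity} described above.
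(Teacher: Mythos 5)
Your proposal follows the paper's proof essentially verbatim: the same governing equation for $\eps\Piii_\eps$ with the same splitting into an interior functional $G^2_\eps$ (Taylor remainders of $\epsb(p_0)$ and $f_2-f_1$ plus the $\eps^{d-1}$-terms carrying $\wi,\wii$) and a boundary functional $G^3_\eps$, the same application of Lemma~\ref{lma:aux} with the same Dirichlet datum $g_\eps$, the same estimates via Lemma~\ref{lma:remainder_est} and the scaled trace inequality, and the identical triangle-inequality argument for part (ii); your remark on the $C^3$-regularity of $p_0$ near $x_0$ makes explicit what the paper uses implicitly. The only slip is the parenthetical ``using $\Pii_\eps|_{\Gamma_\eps}=0$'': it is the first variation $\PPi_\eps$ that vanishes on $\Gamma_\eps$, whereas $\Pii_\eps|_{\Gamma_\eps}=\eps^{d-2}\Si(\eps x)-\eps^{-1}\PPi$, but your resulting formula for $g_\eps$ is nevertheless exactly the paper's.
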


\begin{proof}
 ad (i):For the sake of clarity, we restrict ourselves to the case of $d=3$. Dimension $d=2$ can be treated in a similar fashion.
In view of the auxiliary result Lemma \ref{lma:aux}, we seek a governing equation for $\eps\Piii_\eps=\Pii_\eps-\Pii-\eps^{d-2}\wii\circ \Phi_\eps$. Such an equation can be found using similar techniques to the analysis of the direct state. Thus, we refer to the appendix (Section \ref{sec:ad_P2}) for more details regarding the exact computation and only mention that there are functionals $G^2_\eps$, $G^3_\eps$, such that

\begin{align}\label{eq:P2_rhs}
    \int_{\Dsf_\eps} \VC_{\omega}\epsb(\varphi):\epsb(\eps\Piii_\eps)\; dx=G^2_\eps(\varphi)+G^3_\eps(\varphi) \quad\text{ for all } \varphi\in H^1_{\Gamma_\eps}(\Dsf_\eps)^d,
\end{align}
where $G^k_\eps$, $k=2,3$, satisfy
\begin{equation}\label{eq:P2_F_est}
\|G^k_\eps\|\le C\eps,
\end{equation}
for $k\in\{2,3\}$ and a constant $C>0$. Since \[\eps\Piii_\eps|_{\Gamma_\eps}=(\eps^{d-2}\Si(\eps x) -\eps^{-1}\PPi)|_{\Gamma_\eps}+(\eps^{d-2}\Sii(\eps x)-\Pii)|_{\Gamma_\eps},\] it follows by Lemma \ref{lma:remainder_est} item(i), (ii) with $m=d-1$ and $m=d$ respectively that

\begin{equation}\label{eq:P2_g_est}
\eps^{\frac{1}{2}}\|\eps\Piii_\eps\|_{L_2(\Gamma_\eps)^d}+|\eps\Piii_\eps|_{H^{\frac{1}{2}}(\Gamma_\eps)^d}\le C\eps^{\frac{d}{2}}.
\end{equation}

Hence, considering \eqref{eq:P2_F_est} and \eqref{eq:P2_g_est}, Lemma \ref{lma:aux} shows \eqref{eq:P2_diff_est}.\newline
ad (ii): By the triangle inequality we have
\begin{align}
\begin{split}
    \|\eps^{-1}(\nabla(\PPi_\eps)-\nabla(\PPi))& -\nabla(\Pii)\|_{L_2(\omega)^{d\times d}} \\
    \le & \|\nabla(\Pii_\eps-\Pii-\eps^{d-2}\wii\circ \Phi_\eps)\|_{L_2(\omega)^{d\times d}}\\
&+\eps^{d-1}\|\nabla(\wi)\circ \Phi_\eps\|_{L_2(\omega)^{d\times d}}+\eps^{d-1}\|\nabla(\wii)\circ \Phi_\eps\|_{L_2(\omega)^{d\times d}}\\
    \le &  C\eps^{1-\alpha},
\end{split}
\end{align}
for a positive constant $C$. This shows (ii) and therefore finishes the proof.
\end{proof}


\subsection{Averaged adjoint equation}\label{sec:asymptotic_av}

The averaged adjoint state $q_\eps$ satisfies
\begin{equation}
  q_\eps\in H^1_\Gamma(\Dsf)^d,\quad  a_\eps(\varphi,q_\eps) = - \int_0^1\partial J_\eps(su_\eps + (1-s)u_0)(\varphi)\; ds \quad \text{ for all } \varphi \in H^1_\Gamma(\Dsf)^d.
\end{equation}
With the cost function defined in \eqref{eq:cost_func} this equation reads explicitly
\begin{align}\label{eq:av_adj_per}
\begin{split}
    \int_\Dsf \VC_{\omega_\eps} \epsb(\varphi):\epsb(q_\eps) \;dx= & -\gamma_m\int_{\Gamma^m}(u_0+u_\eps-2 u_m)\cdot\varphi\;dS-\gamma_f\int_\Dsf f_{\omega_\eps}\cdot\varphi\;dx \\
                                                                   & -\gamma_g\int_{\Dsf}[\nabla u_0+\nabla u_\eps-2 \nabla u_d]:\nabla \varphi\;dx,
\end{split}
\end{align}
for all $\varphi\in H^1_\Gamma(\Dsf)^d$. Similarly, the unperturbed adjoint equation reads
\begin{align}\label{eq:av_adj_unper}
\begin{split}
    \int_\Dsf \VC_{2} \epsb(\varphi):\epsb(q_0) \;dx=& -2\gamma_m\int_{\Gamma^m}(u_0- u_m)\cdot\varphi\;dS-\gamma_f\int_\Dsf f_{2}\cdot\varphi\;dx \\
                                                     &-2\gamma_g\int_{\Dsf}[\nabla u_0-\nabla u_d]:\nabla \varphi\;dx,
\end{split}
\end{align}
for all $\varphi\in H^1_\Gamma(\Dsf)^d$. Considering \eqref{eq:adj_unper}, we like to point out that $p_0$ and $q_0$ satisfy the same equation and due to unique solvability it follows $p_0=q_0$.  Note that for the sake of simplicity we have chosen $\gamma_g=\gamma_m=0$ in $d=2$, as these terms lead to a more complicated analysis of the asymptotic expansion of $q_\eps$.

We now introduce the first terms of the asymptotic expansion:

\begin{lemma}\label{lma:Q1_rep}
\begin{itemize}
\item There exists a solution $[Q]\in\dot{BL}(\VR^d)^d$ to
\begin{align}\label{eq:first_Qh}
\begin{split}
    \int_{\VR^d}\VC_\omega \epsb(\varphi):\epsb([Q])\;dx&=\int_\omega(\VC_2-\VC_1)\epsb(\varphi):\epsb(q_0)(x_0)\;dx,
\end{split}
\end{align}
for all $\varphi\in \dot{BL}(\VR^d)^d$.
Moreover, there exists a representative $\hat{Q}^{(1)} \in [Q]$, which satisfies pointwise for $|x|\rightarrow \infty$:
\begin{equation}
\hat{Q}^{(1)}(x)=\hat{T}^{(1)}(x)+\mathcal{O}(|x|^{-d}),
\end{equation}
where $\hat{T}^{(1)}:\VR^d\rightarrow \VR^d$ satisfies
\begin{equation}
|\hat{T}^{(1)}(x)|=\begin{cases}
    \hat{b}_2 |x|^{-1} \quad &\text{ for }d=2,\\
\hat{b}_3 |x|^{-2} \quad &\text{ for }d=3,
\end{cases}
\end{equation}
for some constants $\hat{b}_2,\hat{b}_3\in \VR$.

\item There exists a solution $[Q]\in\dot{BL}(\VR^3)^3$ to
\begin{align}\label{eq:first_Qt}
\begin{split}
    \int_{\VR^d}\VC_\omega \epsb(\varphi):\epsb([Q])\;dx&=-\gamma_g\int_{\VR^d}\nabla\Ui:\nabla\varphi\;dx,
\end{split}
\end{align}
for all $\varphi\in \dot{BL}(\VR^3)^3$.
Moreover, there exists a representative $\tilde{Q}^{(1)} \in [Q]$, which satisfies pointwise for $|x|\rightarrow \infty$:
\begin{equation}
\tilde{Q}^{(1)}(x)=\tilde{T}^{(1)}(x)+\mathcal{O}(\ln(|x|)|x|^{-2}),
\end{equation}
where $\tilde{T}^{(1)}:\VR^d\rightarrow \VR^d$ satisfies
\begin{equation}
|\tilde{T}^{(1)}(x)|=\tilde{b}_3 |x|^{-1}
\end{equation}
for a constant $\tilde{b}_3\in \VR$.

\end{itemize}

Now let \[\Qi:=\begin{cases}\hat{Q}^{(1)}+\tilde{Q}^{(1)}\quad&\text{ for }d=3,\\\hat{Q}^{(1)}\quad&\text{ for }d=2, \end{cases}\]
and similarly \[\Ti:=\begin{cases}\hat{T}^{(1)}+\tilde{T}^{(1)}\quad&\text{ for }d=3,\\\hat{T}^{(1)}\quad&\text{ for }d=2. \end{cases}\]

\end{lemma}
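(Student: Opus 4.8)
The plan is to handle the two transmission problems \eqref{eq:first_Qh} and \eqref{eq:first_Qt} separately and then merely assemble $\Qi$ and $\Ti$; the overall scheme is the one already used in Lemma~\ref{lma:U1_rep} and Lemma~\ref{lma:P1_rep}, the only genuinely new feature being the right-hand side of \eqref{eq:first_Qt}. For existence I would, as in Lemma~\ref{lma:P1_rep}, rewrite the bilinear form with the help of the adjoint tensor $\VC_\omega^\top$, so that it reads $\int \epsb(\varphi):\VC_\omega^\top\epsb([Q])\,dx$. Coercivity on $\dot{BL}(\VR^d)^d$ follows from Korn's inequality on the whole space (for instance, via the Fourier transform one has $\|\epsb(\varphi)\|_{L_2(\VR^d)}^2\ge\frac12\|\nabla\varphi\|_{L_2(\VR^d)}^2$) together with the uniform positivity of $\VC_\omega$, and continuity is clear. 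For \eqref{eq:first_Qh} the right-hand side $\varphi\mapsto\int_\omega(\VC_2-\VC_1)\epsb(\varphi):\epsb(q_0)(x_0)\,dx$ is $\dot{BL}$-continuous because its integrand is supported in the bounded set $\omega$; for \eqref{eq:first_Qt} one first notes that $\nabla\Ui\in L_2(\VR^3)^{3\times3}$ by Lemma~\ref{lma:U1_rep}, so that $\varphi\mapsto-\gamma_g\int_{\VR^3}\nabla\Ui:\nabla\varphi\,dx$ is $\dot{BL}$-continuous by the Cauchy--Schwarz inequality. In both cases Lax--Milgram provides a unique $[Q]$.

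The asymptotics of $\hat Q^{(1)}$ are obtained exactly as for $\Ui$ and $\PPi$: one writes the strong transmission form of \eqref{eq:first_Qh} across $\partial\omega$, represents the interior and exterior parts by single layer potentials $\mathcal S^1_\omega f$, $\mathcal S^2_\omega g$ with densities $f,g\in L_2(\partial\omega)^d$ supplied by \cite[p.76, Thm.~3.3.8]{b_AM_2008a}, uses that the transmission datum $(\VC_2-\VC_1)\epsb(q_0)(x_0)n$ has vanishing mean over $\partial\omega$ to conclude $\int_{\partial\omega}g\,dS=0$, and Taylor expands $\Gamma_2(x-y)$ in $y$ at the origin. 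The vanishing mean kills the monopole contribution and leaves precisely the stated leading behaviour ($|x|^{-1}$ for $d=2$, $|x|^{-2}$ for $d=3$) with remainder $\mathcal O(|x|^{-d})$.

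For $\tilde Q^{(1)}$, which appears only in $d=3$, I would first integrate by parts in the right-hand side of \eqref{eq:first_Qt}. Using $\Div(\VC_2\epsb(\Ui))=0$ in $\bar\omega^{\mathsf c}$ and Lemma~\ref{lma:U1_rep}, the functional $-\gamma_g\int_{\VR^3}\nabla\Ui:\nabla\varphi\,dx$ corresponds to a body force which is bounded on $\omega$, has a jump layer on $\partial\omega$ (since $\nabla\Ui$ is discontinuous there), and in the exterior region $\bar\omega^{\mathsf c}$ decays like $|x|^{-4}$, that is, one order faster than $\nabla\Ui$ itself. Representing $\tilde Q^{(1)}$ as a Newtonian-type volume potential of this source plus single layer potentials over $\partial\omega$ that restore the transmission conditions, and Taylor expanding $\Gamma_2$, one extracts a leading term $\tilde T^{(1)}$ of order $|x|^{-1}$. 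The slowly decaying exterior part of the source is what prevents the remainder from being $\mathcal O(|x|^{-2})$: the associated first moment $\int y\,\Delta\Ui(y)\,dy$ diverges only logarithmically in three dimensions, which is the origin of the $\mathcal O(\ln(|x|)|x|^{-2})$ term in \eqref{eq:first_Qt}'s conclusion. Finally $\Qi$ and $\Ti$ are defined as the indicated sums, with no further argument needed; in $d=2$ only the $\hat Q^{(1)}$-part survives because $\gamma_g=0$ there.

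The main obstacle is the last point, namely the far-field analysis of $\tilde Q^{(1)}$ when the forcing carries a heavy $|x|^{-4}$ tail: one must verify carefully that the leading term is genuinely of order $|x|^{-1}$ and that the logarithmic loss in the remainder is sharp. By contrast, the existence statements and the expansion of $\hat Q^{(1)}$ are routine repetitions of the arguments already carried out for the state equation and for Amstutz' adjoint.
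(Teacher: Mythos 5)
Your proposal is correct and follows essentially the same route as the paper, whose entire proof is the single sentence that one argues as in Lemma~\ref{lma:P1_rep} (single layer potentials after rewriting with $\VC_\omega^\top$) except that the exterior inhomogeneity in \eqref{eq:first_Qt} is handled by a Newton potential. Your additional detail — Lax--Milgram with whole-space Korn for existence, the vanishing-mean argument for the density of $\hat Q^{(1)}$, and the logarithmically divergent first moment of the $|x|^{-4}$-decaying exterior source as the origin of the $\mathcal O(\ln(|x|)\,|x|^{-2})$ remainder for $\tilde Q^{(1)}$ — is consistent with and fleshes out what the paper leaves implicit.
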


\begin{proof}
Similar to Lemma \ref{lma:P1_rep}, where this time, due to the inhomogeneity in the exterior domain, we use a Newton potential to represent the solution.
\end{proof}

\begin{theorem}\label{thm:main_Q1}
For $\alpha \in (0,1)$ and $\eps>0$ sufficiently small there is a constant $C>0$, such that
\begin{align}\label{eq:exp1_est_av_adj}
\|Q_\eps^{(1)}-Q^{(1)}\|_\eps\le
\begin{cases}
C\eps^{\frac{1}{2}} \quad &\text{ for }d=3,\\
C\eps^{1-\alpha} \quad &\text{ for }d=2.
\end{cases}
\end{align}
\end{theorem}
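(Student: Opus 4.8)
The plan is to transcribe the proof of Theorem~\ref{thm:main_P1}, the only genuinely new feature being the appearance of the perturbed state $u_\eps$ on the right-hand side of \eqref{eq:av_adj_per} through the $\gamma_g$- and $\gamma_m$-terms (both absent in $d=2$, where $\gamma_g=\gamma_m=0$). Subtracting \eqref{eq:av_adj_unper} from \eqref{eq:av_adj_per}, using $p_0=q_0$, and changing variables by $\Phi_\eps$ exactly as in \eqref{eq:diff_scaled}, one obtains
\begin{align*}
\int_{\Dsf_\eps}\VC_\omega\epsb(\varphi):\epsb(Q^{(1)}_\eps)\,dx =& \int_\omega(\VC_2-\VC_1)\epsb(\varphi):\epsb(q_0)\circ\Phi_\eps\,dx - \gamma_f\eps\int_\omega(f_1-f_2)\circ\Phi_\eps\cdot\varphi\,dx\\
& - \gamma_g\int_{\Dsf_\eps}\nabla U^{(1)}_\eps:\nabla\varphi\,dx - \gamma_m\eps\int_{\Phi_\eps^{-1}(\Gamma^m)}U^{(1)}_\eps\cdot\varphi\,dS
\end{align*}
for all $\varphi\in H^1_{\Gamma_\eps}(\Dsf_\eps)^d$. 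From this I would subtract the governing relation for $Q^{(1)}=\hat Q^{(1)}+\tilde Q^{(1)}$ from Lemma~\ref{lma:Q1_rep}: splitting $\int_{\VR^d}=\int_{\Dsf_\eps}+\int_{\VR^d\setminus\Dsf_\eps}$ in \eqref{eq:first_Qh} and \eqref{eq:first_Qt} and integrating by parts in the exterior (where $\VC_\omega=\VC_2$ and $Q^{(1)}$ solves the governing PDE off $\omega$) produces the boundary integral $\int_{\Gamma_\eps^N}\VC_2^\top\epsb(Q^{(1)})n\cdot\varphi\,dS$, the interior source $\int_\omega(\VC_2-\VC_1)\epsb(\varphi):\epsb(q_0)(x_0)\,dx$, and exterior tails controlled by the decay of $\nabla U^{(1)}$. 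One thus reaches an equation $\int_{\Dsf_\eps}\VC_\omega\epsb(\varphi):\epsb(Q^{(1)}_\eps-Q^{(1)})\,dx=H^1_\eps(\varphi)$ for all $\varphi\in H^1_{\Gamma_\eps}(\Dsf_\eps)^d$, to which Lemma~\ref{lma:aux} applies with $A=\VC_\omega^\top$, $F_\eps=H^1_\eps$ and $g_\eps=-Q^{(1)}|_{\Gamma_\eps}$ (admissible since $Q^{(1)}_\eps|_{\Gamma_\eps}=0$):
\begin{equation*}
\|Q^{(1)}_\eps-Q^{(1)}\|_\eps\le C\bigl(\|H^1_\eps\|+\eps^{\frac{1}{2}}\|Q^{(1)}\|_{L_2(\Gamma_\eps)^d}+|Q^{(1)}|_{H^{\frac{1}{2}}(\Gamma_\eps)^d}\bigr).
\end{equation*}

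Next I would estimate $\|H^1_\eps\|$ term by term, as in Theorems~\ref{thm:main_U1} and~\ref{thm:main_P1}. The coefficient term $(\VC_2-\VC_1)[\epsb(q_0)\circ\Phi_\eps-\epsb(q_0)(x_0)]$ is $O(\eps)$ by a Taylor expansion of $q_0=p_0$ about $x_0$ (which requires sufficient interior regularity of $p_0$, in the spirit of Assumption~\ref{ass:regularity}) and H\"older; the $\gamma_f$-term is $O(\eps)$ for $d=3$ and $O(\eps^{1-\alpha})$ for $d=2$ by H\"older together with the scaled Gagliardo--Nirenberg estimates of Lemma~\ref{lma:scaling2}(b),(c); the difference $-\gamma_g\int_{\Dsf_\eps}(\nabla U^{(1)}_\eps-\nabla U^{(1)}):\nabla\varphi$ is $O(\eps)$ directly from Theorem~\ref{thm:main_U1}; the exterior tail $-\gamma_g\int_{\VR^d\setminus\Dsf_\eps}\nabla U^{(1)}:\nabla\varphi$ is $O(\eps^{d/2})$ since $|\nabla U^{(1)}(x)|\sim|x|^{-d}$; and the $\Phi_\eps^{-1}(\Gamma^m)$-term is $O(\eps^{d/2})$ because $\Gamma^m$ stays at positive distance from $x_0$, so $\|U^{(1)}_\eps\|_{L_2(\Phi_\eps^{-1}(\Gamma^m))^d}$ is controlled by Lemma~\ref{lma:remainder_est}(i) and the scaled trace inequality of Lemma~\ref{lma:scaling2}(d). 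The genuinely new point --- and the reason the rate degrades to $\tfrac12$ in $d=3$ --- is the boundary term $\int_{\Gamma_\eps^N}\VC_2^\top\epsb(Q^{(1)})n\cdot\varphi\,dS$: the $\hat Q^{(1)}$-part decays like $|x|^{-(d-1)}$ and gives $O(\eps^{d/2})$ via Lemma~\ref{lma:remainder_est}(iii) with $m=d-1$, whereas the corrector $\tilde Q^{(1)}$ of \eqref{eq:first_Qt} decays only like $|x|^{-1}$, i.e.\ $m=1$, so Lemma~\ref{lma:remainder_est}(iii) with $m=1$, $d=3$ yields $\|\epsb(\tilde Q^{(1)})\|_{L_2(\Gamma_\eps^N)^{d\times d}}\le C\eps$ and hence only $O(\eps^{1/2})$ after the factor $\eps^{-1/2}$ from the scaled trace inequality. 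The full assembly of $H^1_\eps$ would be relegated to the appendix, in parallel with Sections~\ref{sec:ad_P1} and~\ref{sec:ad_P2}.

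The boundary data $g_\eps=-Q^{(1)}|_{\Gamma_\eps}$ is handled likewise: the $\hat Q^{(1)}$-contributions to $\eps^{\frac{1}{2}}\|g_\eps\|_{L_2(\Gamma_\eps)^d}$ and $|g_\eps|_{H^{\frac{1}{2}}(\Gamma_\eps)^d}$ are $O(\eps^{d/2})$ by Lemma~\ref{lma:remainder_est}(i),(ii) with $m=d-1$, while (only in $d=3$) those of $\tilde Q^{(1)}$ are $O(\eps^{1/2})$ by the same lemma with $m=1$ (the logarithm in $\tilde Q^{(1)}-\tilde T^{(1)}=O(\ln|x|\,|x|^{-2})$ only affecting strictly lower-order terms). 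In $d=2$, where $\gamma_g=\gamma_m=0$ and $Q^{(1)}=\hat Q^{(1)}$, the functional $H^1_\eps$ and the data $g_\eps$ coincide with the objects treated for Amstutz' adjoint, so the bound $C\eps^{1-\alpha}$ follows verbatim from Theorem~\ref{thm:main_P1}; in $d=3$ the collected contributions give $C\eps^{1/2}$, which is \eqref{eq:exp1_est_av_adj}. The step I expect to be the main obstacle is the careful treatment of the $\gamma_g$-coupling --- making the exterior integration by parts rigorous (keeping the distributed source arising from $\gamma_g\Delta U^{(1)}$ in weak form and using $\nabla U^{(1)}\in L_2(\VR^d\setminus\Dsf_\eps)^{d\times d}$ with its sharp tail bound) and, above all, establishing and then exploiting the exact $|x|^{-1}$ decay of $\tilde Q^{(1)}$ from its Newton- and single-layer-potential representation in Lemma~\ref{lma:Q1_rep}, since it is precisely this slow decay at the outer boundary $\Gamma_\eps$ that costs half a power of $\eps$ compared with Theorems~\ref{thm:main_U1} and~\ref{thm:main_P1}.
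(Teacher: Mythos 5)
Your proposal is correct and follows essentially the same route as the paper: subtract the perturbed and unperturbed averaged adjoint equations, rescale, subtract the (exterior-integrated-by-parts) equation for $Q^{(1)}=\hat Q^{(1)}+\tilde Q^{(1)}$, and apply Lemma~\ref{lma:aux} together with Lemma~\ref{lma:remainder_est} and Theorem~\ref{thm:main_U1} to the resulting functional and boundary data, the rate-limiting contribution in $d=3$ being exactly the one you single out, namely the $\Gamma_\eps^N$-term and trace of $\tilde Q^{(1)}$, whose $|x|^{-1}$ decay ($m=d-2$) costs half a power of $\eps$. The only (harmless) deviations are lower-order bookkeeping, e.g.\ the paper bounds the $\Gamma^m_\eps$-term by $C\eps$ rather than $C\eps^{d/2}$, which does not affect the final $\eps^{1/2}$ rate.
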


\begin{proof}
We only show the case of $d=3$, since the proof for $d=2$ follows the same lines.
Again, we start by deriving an equation of the form

\begin{align}\label{eq:Q1_rhs}
\int_{\Dsf_\eps} \VC_{\omega}\epsb(\varphi):\epsb(\Qi_\eps-\Qi)\; dx=G^4_\eps(\varphi),
\end{align}
for all $\varphi\in H^1_{\Gamma_\eps}(\Dsf_\eps)^d$, where 

\begin{equation}\label{eq:F_est_av_adj}
\|G^4_\eps\|\le C\eps^{\frac{1}{2}}.
\end{equation}
A detailed derivation and estimation of the functional $G^4_\eps$ can be found in the appendix (Section \ref{sec:ad_Q1}).
In view of Lemma \ref{lma:aux} we now estimate the boundary integral terms. Since $\Qi_\eps|_{\Gamma_\eps}=0$ we deduce from Lemma \ref{lma:remainder_est} item (i), (ii) with $m=d-1$ that there is a constant $C>0$ satisfying

\begin{equation}\label{eq:g_est_av_adj}
\eps^{\frac{1}{2}}\|\Qi_\eps-\Qi\|_{L_2(\Gamma_\eps)^d}+|\Qi_\eps-\Qi|_{H^{\frac{1}{2}}(\Gamma_\eps)^d}\le C\eps^{\frac{1}{2}}.
\end{equation}
Thus, considering \eqref{eq:F_est_av_adj} and \eqref{eq:g_est_av_adj}, an application of Lemma \ref{lma:aux} shows \eqref{eq:exp1_est_av_adj}.
\end{proof}
We now continue with the second order expansion. Similar to the previous asymptotic expansions, we therefore introduce each component in the following lemma. Note that this time the regular correctors aim to approximate $\Ui$ in addition to their approximation of the occurring boundary layer correctors $\Qi,\Qii$. This is a result of the appearance of $\Ui_\eps$ on the right hand side of \eqref{eq:Q1_rhs}, \eqref{eq:G5}.
\begin{lemma}\label{lma:Q2_def}
\begin{itemize}

\item There is a unique solution $\mi\in H^1(\Dsf)^d$ with \newline$\mi(x)=-\Ti(x-x_0)$ on $\Gamma$, such that
\begin{align}\label{eq:mi}
\begin{split}
\int_\Dsf \VC_2 \epsb(\varphi):\epsb(\mi)\;dx&=-\int_{\Gamma^N}\VC_2^\top \epsb(\Ti)(x-x_0)n \cdot\varphi\; dS,
\end{split}
\end{align}
for all $\varphi \in H^1_\Gamma(\Dsf)^d$.

\item There is a solution $[Q]\in\dot{BL}_p(\VR^d)^d$ to
\begin{align}\label{eq:Qiih}
\begin{split}
    \int_{\VR^d}\VC_\omega\epsb(\varphi):\epsb([Q])\; dx=&\int_\omega (\VC_2-\VC_1)\epsb(\varphi):[\nabla\epsb(q_0)(x_0)x]\; dx\\
&-\gamma_g\int_{\VR^d}\nabla\Uii:\nabla\varphi\; dx,
\end{split}
\end{align}
for all $\varphi \in \dot{BL}_{p^\prime}(\VR^d)^d$, where 
\[
    p=\begin{cases}
    2+\delta\quad &\text{ for }d=2,\\ 
2\quad &\text{ for }d=3,
\end{cases}
\] 
and $\delta>0$ small. Moreover, there exists a representative $\hat{Q}^{(2)} \in [Q]$, which satisfies pointwise for $|x|\rightarrow \infty$:
\begin{equation}
\hat{Q}^{(2)}(x)=\hat{T}^{(2)}(x)+\begin{cases}
 \mathcal{O}(|x|^{-1}) \quad &\text{ for }d=2,\\
 \mathcal{O}(\ln(|x|)|x|^{-2}) \quad &\text{ for }d=3,
\end{cases}
\end{equation}
where $\hat{T}^{(2)}:\VR^d\rightarrow \VR^d$ satisfies
\begin{equation}
|\hat{T}^{(2)}(x)|=\begin{cases}
 \hat{c}_2 \ln(|x|) \quad &\text{ for }d=2,\\
\hat{c}_3 \ln(|x|)|x|^{-1} \quad &\text{ for }d=3,
\end{cases}
\end{equation}
for some constants $\hat{c}_2,\hat{c}_3\in \VR$.

\item There is a solution $[Q]\in\dot{BL}_p(\VR^d)^d$ to
\begin{equation}\label{eq:Qiit}
\int_{\VR^d}\VC_\omega\epsb(\varphi):\epsb([Q])\; dx=\gamma_f\int_\omega [f_2(x_0)-f_1(x_0)]\cdot\varphi\; dx,
\end{equation}
for all $\varphi \in C^1_c(\VR^d)^d$, where 
\[
    p=\begin{cases}
    2+\delta\quad &\text{ for }d=2,\\ 
2\quad &\text{ for }d=3,
\end{cases}
\] and $\delta>0$ small. Moreover, there exists a representative $\tilde{Q}^{(2)} \in [Q]$, which satisfies pointwise for $|x|\rightarrow \infty$:
\begin{equation}
\tilde{P}^{(2)}(x)=\tilde{T}^{(2)}(x)+\mathcal{O}(|x|^{1-d}),
\end{equation}
where $\tilde{T}^{(2)}:\VR^d\rightarrow \VR^d$ satisfies
\begin{equation}
|\tilde{T}^{(2)}(x)|=\begin{cases}
 \tilde{c}_2 \ln(|x|) \quad &\text{ for }d=2,\\
\tilde{c}_3 |x|^{-1} \quad &\text{ for }d=3,
\end{cases}
\end{equation}
for some constants $\tilde{c}_2,\tilde{c}_3\in \VR$.

\item There is a unique solution $\mii_U\in H_\Gamma^1(\Dsf)^d$, such that
\begin{align}\label{eq:miiU}
\begin{split}
    \int_\Dsf \VC_2 \epsb(\varphi):\epsb(\mii_U)\; dx= & -\gamma_m\int_{\Gamma^m} \Ri(x-x_0)\cdot \varphi\;dS-\gamma_m\int_{\Gamma^m} \vi\cdot\varphi\;dS\\
&-\gamma_m\int_{\Gamma^m}\Rii(x-x_0)\cdot\;dS-\gamma_m\int_{\Gamma^m}\vii\cdot\varphi\;dS\\
&-\gamma_g\int_\Dsf\nabla \vi:\nabla\varphi\; dx-\gamma_g\int_\Dsf\nabla \vii:\nabla\varphi\; dx\\
& -\gamma_g\int_{\Gamma^N}\nabla \Ri (x-x_0) n\cdot\varphi\; dS \\
& -\gamma_g\int_{\Gamma^N}\nabla \Rii (x-x_0) n\cdot\varphi\; dS
\end{split}
\end{align}
for all $\varphi \in H^1_\Gamma(\Dsf)^d$.
\item There is a unique solution $\mii_Q\in H^1(\Dsf)^d$ with $\mii(x)=-\Tii(x-x_0)$ on $\Gamma$, such that
\begin{align}\label{eq:miiQ}
\begin{split}
\int_\Dsf \VC_2 \epsb(\varphi):\epsb(\mii_Q)\; dx&=-\int_{\Gamma^N}\VC_2^\top \epsb(\Tii)(x-x_0)n\cdot \varphi\; dS
\end{split}
\end{align}
for all $\varphi \in H^1_\Gamma(\Dsf)^d$, where 
\[
    \Tii:=\begin{cases}
 \hat{T}^{(2)}+\tilde{T}^{(2)} \quad &\text{ for }d=2,\\
\hat{T}^{(2)} \quad &\text{ for }d=3.
\end{cases}
\]
Furthermore, we define $\mii:=\mii_U+\mii_Q$.
\end{itemize}
\end{lemma}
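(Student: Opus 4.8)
The plan is to treat the five correctors in two groups, mirroring the proofs of Lemma~\ref{lma:U2_def} and Lemma~\ref{lma:P2_def}.

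First, the regular correctors $\mi$, $\mii_U$ and $\mii_Q$ each solve a linear elasticity problem on the fixed bounded domain $\Dsf$ with the uniformly positive definite tensor $\VC_2$, a (possibly inhomogeneous) Dirichlet datum on $\Gamma$ --- which is smooth away from $x_0$, since the traces of $\Ti$, $\Tii$, $\Ri$, $\Rii$, $\vi$, $\vii$ are --- and a right-hand side that is a bounded linear functional on $H^1_\Gamma(\Dsf)^d$ built from surface integrals over $\Gamma^N$ and volume integrals over $\Dsf$. I would lift the Dirichlet datum by a continuous extension operator, exactly as in the proof of Lemma~\ref{lma:aux}, reducing to a homogeneous problem, and then apply the Lax--Milgram theorem: the bilinear form $(\varphi,\psi)\mapsto\int_\Dsf\VC_2\epsb(\varphi):\epsb(\psi)\,dx$ is continuous and, by Korn's and Friedrich's inequalities (using $|\Gamma|>0$), coercive on $H^1_\Gamma(\Dsf)^d$. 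This yields existence and uniqueness of $\mi$, $\mii_U$, $\mii_Q$, and hence of $\mii=\mii_U+\mii_Q$.

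Second, for the boundary layer correctors $\hat Q^{(2)}$ and $\tilde Q^{(2)}$ I would first rewrite \eqref{eq:Qiih} and \eqref{eq:Qiit} with the adjoint tensor $\VC_\omega^\top$, as in Lemma~\ref{lma:P1_rep}, so the associated transmission problem takes the standard form to which \cite[p.76, Thm.~3.3.8]{b_AM_2008a} applies. Solvability in $\dot{BL}_p(\VR^d)^d$ for $d=3$ ($p=2$) is then the Lax--Milgram argument on the Beppo-Levi space, while for $d=2$ ($p=2+\delta$) one argues by approximation exactly as in the remark following Lemma~\ref{lma:U2_def}: solve a rescaled problem on $\Dsf$, obtain a uniform bound on $\|\nabla(\cdot)\|_{L_p}$ via H\"older and Gagliardo--Nirenberg, and pass to a weak limit. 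A representative with the claimed decay is produced by a potential ansatz: for $\tilde Q^{(2)}$, absorb the volume source $\gamma_f\int_\omega[f_2(x_0)-f_1(x_0)]\cdot\varphi\,dx$ by a Newton potential of $\Gamma_1$ over $\omega$ and correct the jumps across $\partial\omega$ with single layer potentials $\mathcal S^1_\omega f,\mathcal S^2_\omega g$, as in the $\tilde U^{(2)}$ part of Lemma~\ref{lma:U2_def}; for $\hat Q^{(2)}$, combine the $\hat U^{(2)}$-type construction for the source $(\VC_2-\VC_1)[\nabla\epsb(q_0)(x_0)x]$ with a Newton potential handling the exterior source $-\gamma_g\nabla\Uii$. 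The advertised profiles $\hat T^{(2)},\tilde T^{(2)}$ follow by Taylor expanding the fundamental solution, resp.\ the Newton kernel, at the origin inside the representation formulae and tracking which moments of the densities vanish: the net force of the single layer densities is zero, but the volume/exterior sources carry nonzero mass, which forces the leading term to be $\ln|x|$ (in $d=2$) or $|x|^{-1}$, resp.\ $\ln|x||x|^{-1}$ (in $d=3$), rather than faster-decaying. The requirement $p>2$ in $d=2$ is exactly so that the gradient of such a logarithmically growing representative lies in $L_p(\VR^2)^{d\times d}$, as noted after Lemma~\ref{lma:U2_def}.

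The main obstacle is the exterior, non-compactly-supported source $-\gamma_g\int_{\VR^d}\nabla\Uii:\nabla\varphi\,dx$ in \eqref{eq:Qiih}, which has no analogue in Lemmas~\ref{lma:U2_def} and \ref{lma:P2_def}. One must check that it defines a bounded functional on $\dot{BL}_{p^\prime}(\VR^d)^d$ --- here using $\gamma_g=0$ in $d=2$ and, in $d=3$, the decay $\nabla\Uii=\mathcal O(|x|^{-2})$ from Lemma~\ref{lma:U2_def}, so that $\nabla\Uii\in L_2(\VR^3)^{d\times d}$ --- and then verify that the Newton potential of $\gamma_g\,\Div(\nabla\Uii)$ picks up precisely the extra logarithmic factor, i.e.\ behaves like $\hat c_3\ln|x||x|^{-1}$ in $d=3$, coming from integrating $|y|^{-3}\!/|x-y|$ over $|y|\lesssim|x|$. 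Once this is settled, the remaining computations are routine adaptations of the two earlier lemmas and are therefore omitted.
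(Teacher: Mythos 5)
Your proposal is correct and follows essentially the same route as the paper, whose proof of this lemma is simply a reference to Lemma~\ref{lma:U2_def} (Lax--Milgram for the regular correctors, single-layer/volume potentials and Taylor expansion of the fundamental solution for the compactly supported sources, $p>2$ in $d=2$) and to Lemma~\ref{lma:Q1_rep} (Newton potential for the exterior source $-\gamma_g\nabla\Uii$, which produces the logarithmically enhanced decay $\ln(|x|)|x|^{-1}$ in $d=3$). You have in fact supplied more detail than the paper does, and your identification of the non-compactly-supported source as the only genuinely new ingredient is exactly right.
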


\begin{proof}
Similar to Lemma~\ref{lma:U2_def} and Lemma~\ref{lma:Q1_rep}.
\end{proof}

Now we are able to state our main result regarding the second order expansion of the averaged adjoint state variable $q_\eps$:

\begin{theorem}\label{thm:main_Q2}
\begin{itemize}
\item[(i)] Let $\alpha\in(0,1)$. There exists a constant $C>0$, such that for $d=3$ and $d=2$, we have respectively:
\begin{align}\label{eq:Q2_diff_est}
\|\eps^{-1}[\Qi_\eps-\Qi-\eps^{d-2}\mi\circ \Phi_\eps]-\Qii-\eps^{d-2}\mii \circ \Phi_\eps\|_\eps&\le C\eps^{\frac{1}{2}}\ln(\eps^{-1})\\
\|\eps^{-1}[\Qi_\eps-\Qi-\eps^{d-1}\mi\circ \Phi_\eps]-\Qii-\eps^{d-2}\mii\circ \Phi_\eps+c\ln(\eps)\|_\eps&\le C\eps^{1-\alpha} 
\end{align}
for $\eps$ sufficiently small and a suitable constant $c\in \VR$.
\item[(ii)] For $d\in \{2,3\}$, there holds $\displaystyle\lim_{\eps\searrow 0}\|\eps^{-1}(\nabla\Qi_\eps-\nabla\Qi)-\nabla\Qii\|_{L_2(\omega)^{d \times d}}=0$.
\end{itemize}
\end{theorem}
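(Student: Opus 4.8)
The plan is to follow the blueprint of Theorems~\ref{thm:U2_main} and~\ref{thm:main_P2} (the second order expansions of the state and of Amstutz' adjoint), the one genuinely new phenomenon being that the averaged adjoint equation~\eqref{eq:av_adj_per} couples $q_\eps$ to the \emph{perturbed} state through $\nabla u_\eps$; after rescaling, this forces the quantity $\eps^{-1}(\nabla\Ui_\eps-\nabla\Ui)-\nabla\Uii$ to enter the equation for the remainder, and it must be controlled by the results already obtained for $u_\eps$. First I would produce a governing equation for the rescaled second order remainder $\eps\Qiii_\eps$, defined exactly as $\eps\Uiii_\eps$ and $\eps\Piii_\eps$ were (now in terms of $\Qi,\Qii$ and the regular correctors $\mi,\mii=\mii_U+\mii_Q$ of Lemma~\ref{lma:Q2_def}, with the additive constant $c\ln(\eps)$ included when $d=2$). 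Concretely: divide the scaled first order identity~\eqref{eq:Q1_rhs} by $\eps$; change variables in the defining relations~\eqref{eq:mi},~\eqref{eq:miiU},~\eqref{eq:miiQ}; and integrate by parts in $\VR^d\setminus\Dsf_\eps$ in~\eqref{eq:Qiih},~\eqref{eq:Qiit} using the equations the boundary layer correctors $\Uii,\hat Q^{(2)},\tilde Q^{(2)}$ satisfy in the exterior. Collecting terms yields
\[
  \int_{\Dsf_\eps}\VC_\omega\,\epsb(\varphi):\epsb(\eps\Qiii_\eps)\,dx=\sum_{k}G^k_\eps(\varphi)\qquad\text{for all }\varphi\in H^1_{\Gamma_\eps}(\Dsf_\eps)^d,
\]
where the $G^k_\eps$ split into interior functionals supported in $\omega$ and boundary functionals supported on $\Gamma^N_\eps$; the detailed bookkeeping would be relegated to an appendix, in the same spirit as Section~\ref{sec:ad_Q1}. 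The one cancellation to be careful about is that the exterior $\nabla\Uii$ contribution coming from~\eqref{eq:Qiih} must be matched against the $\gamma_g$-term inherited from~\eqref{eq:av_adj_per}, so that only the difference $\gamma_g\,\nabla\!\big(\eps^{-1}(\Ui_\eps-\Ui)-\Uii\big)$ survives on $\omega$.

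Next I would estimate the functionals $G^k_\eps$. The purely ``elasticity'' pieces — the Taylor remainder of $\epsb(q_0)\circ\Phi_\eps$ against $\nabla\epsb(q_0)(x_0)x$, the Taylor remainder of $(f_1-f_2)\circ\Phi_\eps$ against $f_i(x_0)$, and the regular corrector terms $\eps^{d-1}\epsb(\mi\circ\Phi_\eps)$, $\eps^{d-1}\epsb(\mii\circ\Phi_\eps)$ — are bounded by $C\eps\|\varphi\|_\eps$ in $d=3$ and $C\eps^{1-\alpha}\|\varphi\|_\eps$ in $d=2$ by exactly the H\"older plus Lemma~\ref{lma:scaling2}(b),(c) arguments used in Theorems~\ref{thm:U2_main} and~\ref{thm:main_P2}. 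The genuinely new term $\gamma_g\int\nabla\!\big(\eps^{-1}(\Ui_\eps-\Ui)-\Uii\big):\nabla\varphi$ is handled with Theorem~\ref{thm:U2_main}(i),(ii) and its refinement in Remark~\ref{rem:U2}, which in $d=3$ gives the rate $\mathcal O(\eps^{1/2})$ for this factor; likewise the $\gamma_m$ contributions on $\Gamma^m$, involving $u_\eps-u_0$, are controlled by the scaled trace inequality Lemma~\ref{lma:scaling2}(d) together with the state estimates. (In $d=2$ all of these terms are absent since $\gamma_g=\gamma_m=0$.) Altogether one expects $\|G^k_\eps\|\le C\eps^{1/2}\ln(\eps^{-1})$ for $d=3$ and $\|G^k_\eps\|\le C\eps^{1-\alpha}$ for $d=2$.

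For the Dirichlet data, since $\Qi_\eps$ vanishes on $\Gamma_\eps$ while $\Qi,\Qii$ do not, the remainder $\eps\Qiii_\eps$ inherits the boundary value $g_\eps=(\eps^{d-2}\Ti(\eps\,\cdot)-\eps^{-1}\Qi)|_{\Gamma_\eps}+(\eps^{d-2}\Tii(\eps\,\cdot)-\Qii)|_{\Gamma_\eps}$, plus $c\ln(\eps)$ when $d=2$ with $c$ chosen so that the logarithmic leading part of $\Tii$ cancels the shift, exactly as in Theorem~\ref{thm:U2_main}. Because $\Qi-\Ti$ and $\Qii-\Tii$ are the higher order remainders in Lemmas~\ref{lma:Q1_rep} and~\ref{lma:Q2_def}, Lemma~\ref{lma:remainder_est} applied with $m=d-1$ and $m=d$ — using its straightforward logarithmic variant for the correctors whose growth is $\ln|x|$ times a power, which converts $\ln|x|$ into $\ln(\eps^{-1})$ — gives $\eps^{1/2}\|g_\eps\|_{L_2(\Gamma_\eps)^d}+|g_\eps|_{H^{1/2}(\Gamma_\eps)^d}\le C\eps^{1/2}\ln(\eps^{-1})$ for $d=3$ and $\le C\eps^{1-\alpha}$ for $d=2$. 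Feeding the bounds of the previous two steps into Lemma~\ref{lma:aux} with $A=\VC_\omega^\top$ (the averaged adjoint bilinear form carries the transposed elasticity tensor, cf.~Lemma~\ref{lma:P1_rep}) yields the estimate~(i). Part~(ii) then follows from~(i) by the triangle inequality, precisely as in Theorems~\ref{thm:U2_main}(ii) and~\ref{thm:main_P2}(ii): on $\omega$ one has $\eps^{-1}(\nabla\Qi_\eps-\nabla\Qi)-\nabla\Qii=\nabla\Qiii_\eps+\eps^{d-1}\nabla\mi\circ\Phi_\eps+\eps^{d-2}\nabla\mii\circ\Phi_\eps$, and since $\|\Qiii_\eps\|_\eps\to0$ by~(i) and $d\ge2$, the right-hand side tends to $0$ in $L_2(\omega)^{d\times d}$.

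I expect the main obstacle to be the state-adjoint coupling in the first two steps: one has to reorganise the exterior integrals so that only the \emph{difference} $\eps^{-1}(\nabla\Ui_\eps-\nabla\Ui)-\nabla\Uii$ enters (rather than $\nabla\Ui_\eps$ alone, which is not small), and then one needs a convergence \emph{rate} for this quantity — Theorem~\ref{thm:U2_main}(ii) alone supplies only qualitative convergence, so one must invoke the sharper $\mathcal O(\eps^{1/2})$ bound of Remark~\ref{rem:U2} in $d=3$ — and carefully track how this $\eps^{1/2}$ combines with the additional $\ln(\eps^{-1})$ produced by the logarithmic growth of $\Qii$. Everything else amounts to a (lengthy but routine) transcription of the state and Amstutz'-adjoint computations, with the transposed tensor $\VC_\omega^\top$ in place of $\VC_\omega$.
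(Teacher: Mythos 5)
Your proposal follows essentially the same route as the paper: derive a governing equation for the rescaled remainder $V_\eps$ by combining the scaled first--order identity with the rescaled defining equations of $\mi$, $\mii=\mii_U+\mii_Q$, $\hat Q^{(2)}$, $\tilde Q^{(2)}$ (integrating by parts in the exterior), estimate the interior and boundary functionals, bound the Dirichlet data via the (logarithmic variant of) Lemma~\ref{lma:remainder_est}, apply Lemma~\ref{lma:aux}, and deduce (ii) by the triangle inequality. The only cosmetic deviation is that you control the state--coupling term through Remark~\ref{rem:U2} (rate $\eps^{1/2}$ for $\eps^{-1}(\nabla\Ui_\eps-\nabla\Ui)-\nabla\Uii$), whereas the paper folds the regular correctors $\vi,\vii$ into $\mii_U$ so that Theorem~\ref{thm:U2_main}(i) yields rate $\eps$ for that piece; both give the claimed $\eps^{1/2}\ln(\eps^{-1})$ bound.
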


\begin{proof}
ad (i): Similar to the proof of the second expansion of the adjoint state variable, we restrict ourselves to the case of $d=3$. The proof for dimension $d=2$ follows the same lines and is therefore omitted.
In view of the auxiliary result Lemma~\ref{lma:aux}, we seek a governing equation for $V_\eps:=\eps^{-1}[\Qi_\eps-\Qi]-\eps^{d-3}\mi\circ \Phi_\eps-\Qii-\eps^{d-2}\mii \circ \Phi_\eps$. Such an equation can be found using similar techniques to the analysis of the direct state and the derivation will be discussed in detail in the appendix (see Section \ref{sec:ad_Q2}). We just note that there are functionals $G^5_\eps$, $G^6_\eps$, such that

\begin{align}\label{eq:Q2_rhs}
    \int_{\Dsf_\eps} \VC_{\omega}\epsb(\varphi):\epsb(V_\eps)\; dx=G^5_\eps(\varphi)+G^6_\eps(\varphi) \quad\text{ for all } \varphi\in H^1_{\Gamma_\eps}(\Dsf_\eps)^d,
\end{align}
where $G^k_\eps$, $k=5,6$, satisfy
\begin{equation}\label{eq:Q2_F_est}
\|G^k_\eps\|\le C\eps^{\frac{1}{2}}\ln(\eps^{-1}),
\end{equation}
for $k\in\{5,6\}$ and a constant $C>0$. Since \[V_\eps|_{\Gamma_\eps}=(\eps^{d-2}\Ti(\eps x) -\eps^{-1}\Qi)|_{\Gamma_\eps}-\Qii|_{\Gamma_\eps},\] it follows with a similar argument to Lemma \ref{lma:remainder_est} that there is a constant $C>0$ satisfying
\begin{equation}\label{eq:Q2_g_est}
\eps^{\frac{1}{2}}\|V_\eps\|_{L_2(\Gamma_\eps)^d}+|V_\eps|_{H^{\frac{1}{2}}(\Gamma_\eps)^d}\le C\eps^{\frac{1}{2}}\ln(\eps^{-1}).
\end{equation}
In view of \eqref{eq:Q2_F_est} and \eqref{eq:Q2_g_est}, Lemma~\ref{lma:aux} shows \eqref{eq:Q2_diff_est}.\newline
ad (ii): Let $d=3$. By the triangle inequality we have
\begin{align}
\begin{split}
    \|\eps^{-1}(\nabla(\Qi_\eps)-\nabla(\Qi))& -\nabla(\Qii)\|_{L_2(\omega)^{d\times d}} \\
                                             & \le \|\eps^{-1}\nabla(\Qi_\eps-\Qi)-\nabla(\eps^{d-3}\mi\circ \Phi_\eps)-\nabla(\eps^{d-2}\mii\circ \Phi_\eps)\|_{L_2(\omega)^{d\times d}}\\
&+\eps^{d-2}\|\nabla(\mi)\circ \Phi_\eps\|_{L_2(\omega)^{d\times d}}+\eps^{d-1}\|\nabla(\mii)\circ \Phi_\eps\|_{L_2(\omega)^{d\times d}}\\
&\le C\eps^{\frac{1}{2}}\ln(\eps^{-1}),
\end{split}
\end{align}
for a positive constant $C$, which shows (ii). The proof for $d=2$ follows the same lines and is therefore omitted.

\end{proof}


\section{Computation of the topological derivatives for linear elasticity problem}\label{sec:topo_deriv}

In this section we compute the first and second topological derivative of our elasticity problem introduced in the introduction, namely, 
\begin{equation}
    \mathcal J(\Omega) = J(\Omega,u) =\gamma_m \int_{\Gamma^m} |u - u_m|^2\;dS + \gamma_f\int_\Dsf f_{\omega_\eps}\cdot u\;dx+\gamma_g\int_\Dsf|\nabla u-\nabla u_d|^2\;dx,
\end{equation}
subject to $u\in H^1(\Dsf)^d$ solves $u|_\Gamma=u_D$ and 
\begin{equation}
    \int_\Dsf \VC_\Omega \epsb(u):\epsb(\varphi)= \int_\Dsf f_\Omega \cdot\varphi\;dx +\int_{\Gamma^N}u_N\cdot\varphi\; dS\quad \text{ for all } \varphi\in H^1_{\Gamma}(\Dsf)^d.
\end{equation}

\begin{definition}
For $\eps\ge 0$ let $\Omega_\eps\subset \Dsf$ be a singularly perturbed domain with perturbation shape $\omega$ and $\Omega:=\Omega_0$. Additionally let $\li,\lii :\VR^+\rightarrow \VR^+$ be two functions converging to $0$ for $\eps\searrow 0$ and $\frac{\li(\eps)}{\lii(\eps)}\rightarrow 0$ for $\eps\searrow 0$. Then the first order topological derivative is defined as
\[d\mathcal{J}(\Omega,\omega)=\lim_{\eps\searrow 0}\frac{\mathcal{J}(\Omega_\eps)-\mathcal{J}(\Omega)}{\li(\eps)}.\]
Similarly, the second order topological derivative is given as
\[d^2\mathcal{J}(\Omega,\omega)=\lim_{\eps\searrow 0}\frac{\mathcal{J}(\Omega_\eps)-\mathcal{J}(\Omega)-\li(\eps)d\mathcal{J}(\Omega)}{\lii(\eps)}.\]
\end{definition}
More specifically, since we considered $\Omega=\emptyset$, we compute the topological derivative at a point $x_0\in \Dsf$ and derive 
the asymptotics of $\mathcal J(\omega_\eps)$ with $\omega_\eps := \Phi_\eps(\omega)$, $\Phi_\eps(x):= x_0+\eps x$. Recall the Lagrangian function
\[ 
    \Cl(\eps,u,v)= J_\eps(u) + a_\eps(u,v) - f_\eps(v), \quad u \in  \Cv, \; v\in \Cw.
\]
with $\Cv = \{u \in H^1({\Dsf})^d| u =u_D \text{ on }\Gamma\}$, $\Cw = H^1_\Gamma({\Dsf})^d$ and 
\begin{align}
    J_\eps(u)   &= \gamma_m \int_{\Gamma^m} |u - u_m|^2\;dS + \gamma_f\int_\Dsf f_{\omega_\eps}\cdot u\;dx+\gamma_g\int_\Dsf|\nabla u -\nabla u_d|^2\;dx,\\
    a_\eps(u,v) &= \int_{\Dsf} \VC_{\omega_\eps} \epsb(u) :  \epsb(v)\;dx,\\
    f_\eps(v)   &= \int_{\Dsf} f_{\omega_\eps} \cdot v\;dx.
\end{align}
We compute the derivative using Proposition~\ref{prop:main_am} (Amstutz' method), Proposition~\ref{prop:main_av} (averaged adjoint) and Proposition~\ref{prop:main_df} (Delfour's method).

\begin{remark}
We would like to point out that, contrary to the setting in Section \ref{sec:abstract_setting}, $\Cv$ is a affine space spanned by $\Cw$ and not a vector space itself. Yet, it can easily be verified that the Lagrangian techniques can still be applied, as the construction of $\Cv$ allows derivatives of $\Cl(\eps,u,v)$ with respect to the second variable in direction $\Cw$.
\end{remark}
\begin{remark}
Note that the more general case $\Omega\neq\emptyset$, $x_0\in\Dsf\setminus\bar{\Omega}$ and $\Omega_\eps=\Omega\cup\omega_\eps$ can be treated in a similar fashion. The main difference is that the unperturbed state equation and unperturbed adjoint state equation respectively depend on $\Omega$ and therefore $u_0$ and $p_0$ vary. Furthermore, as the boundary layer correctors coincide in both cases, the regular correctors need to compensate in $\Omega$.\newline
At last, the computation of the topological derivative for $x_0\in\Omega$ and $\Omega_\eps=\Omega\setminus\omega_\eps$  can be done analogously to the presented one and only results in a change of sign.
\end{remark}

\subsection{Amstutz' method}\label{sec:deriv_am}

In order to compute the first order topological derivative let $\li(\eps):=|\omega_\eps|$. By Proposition~\ref{prop:main_am}, item (i), we have

\begin{equation}
d\mathcal{J}(\emptyset,\omega)(x_0)=\mRi(u_0,p_0)+\pimL(0,u_0,p_0),
\end{equation}
where
\begin{align}
    \mRi(u_0,p_0) & =\lim_{\eps\searrow 0}\frac{\Cl(\eps,u_\eps,p_\eps)-\Cl(\eps,u_0,p_\eps)}{\li(\eps)},\\
    \pimL(0,u_0,p_0)& =\lim_{\eps\searrow 0}\frac{\Cl(\eps,u_0,p_\eps)-\Cl(0,u_0,p_\eps)}{\li(\eps)},
\end{align}
if the above limits exist. Thus, we start with the first quotient $\mRi(u_0,p_0)$:

\begin{align}\label{eq:r1_am}
\begin{split}
\frac{\Cl(\eps,u_\eps,p_\eps)-\Cl(\eps,u_0,p_\eps)}{\li(\eps)}=&\frac{1}{|\omega_\eps|}[J_\eps(u_\eps)+a_\eps(u_\eps,p_\eps)-f_\eps(p_\eps)-J_\eps(u_0)-a_\eps(u_0,p_\eps)+f_\eps(p_\eps)]\\
=\frac{1}{|\omega_\eps|}&[J_\eps(u_\eps)-J_\eps(u_0)+a_\eps(u_\eps-u_0,p_\eps)]\\
=\frac{1}{|\omega_\eps|}\gamma_m&\int_{\Gamma^m}[|u_\eps-u_m|^2-|u_0-u_m|^2-2(u_0-u_m)(u_\eps-u_0)]\;dS\\
+\frac{1}{|\omega_\eps|}\gamma_g&\int_{\Dsf}\left[|\nabla u_\eps-\nabla u_d|^2-|\nabla u_0-\nabla u_d|^2-2(\nabla u_0-\nabla u_d)(\nabla u_\eps-\nabla u_0)\right]\;dx\\
=\frac{1}{|\omega_\eps|}\gamma_m&\int_{\Gamma^m}|u_\eps-u_0|^2\;dS+\frac{1}{|\omega_\eps|}\gamma_g\int_{\Dsf}|\nabla u_\eps-\nabla u_0|^2\;dx.
\end{split}
\end{align}
Now, a change of variables leads to $\frac{\eps}{|\omega|}\gamma_m\|\Ui_\eps\|_{L_2(\Gamma_\eps^m)^d}^2+\frac{1}{|\omega|}\gamma_g\|\nabla \Ui_\eps\|_{L_2(\Dsf_\eps)^{d\times d}}^2$.
On the one hand, we have
\begin{equation}
    \begin{split}
        \frac{\eps}{|\omega|}\gamma_m\|\Ui_\eps\|_{L_2(\Gamma_\eps^m)^d}^2 & \le\frac{\gamma_m}{|\omega|}(\eps\|\Ui_\eps-\Ui\|_{L_2(\Gamma_\eps^m)^d}^2+\eps\|\Ui\|_{L_2(\Gamma_\eps^m)^d}^2) \\
                                                                           & \le C (\|\Ui_\eps-\Ui\|_\eps^2+\eps^d) \le C \eps^{2-\alpha},
\end{split}
\end{equation}
for $\alpha$ arbitrarily small and a constant $C>0$. Here we used Lemma~\ref{lma:scaling2}, item (d), Lemma \ref{lma:remainder_est} item(i) with $m=d-1$ and Theorem~\ref{thm:main_U1}. \newline
On the other hand, Theorem \ref{thm:main_U1} shows that $\nabla\Ui_\eps \rightarrow \nabla\Ui$ in $L_2(\VR^d)^{d\times d}$ for $\eps \searrow 0$.
Now passing to the limit in \eqref{eq:r1_am} yields \[\mRi(u_0,p_0)=\frac{1}{|\omega|}\gamma_g\int_{\VR^d}|\nabla \Ui|^2\;dx.\]
Next, we consider $\pimL(0,u_0,p_0)$. Splitting the quotient, one observes
\begin{align}\label{eq:l1_am}
\begin{split}
\frac{\Cl(\eps,u_0,p_\eps)-\Cl(0,u_0,p_\eps)}{\li(\eps)}=&\frac{1}{|\omega_\eps|}[J_\eps(u_0)+a_\eps(u_0,p_\eps)-f_\eps(p_\eps)-J_0(u_0)-a_0(u_0,p_\eps)+f_0(p_\eps)]\\
=&\frac{1}{|\omega_\eps|}\int_{\omega_\eps}\left[\gamma_f(f_1-f_2)u_0+(\VC_1-\VC_2)\epsb(u_0):\epsb(p_\eps)-(f_1-f_2)p_\eps\right]\;dx\\
=&\gamma_f\dashint_{\omega}(f_1\circ\Phi_\eps-f_2\circ\Phi_\eps)\cdot u_0\circ\Phi_\eps\;dx\\
&+\dashint_{\omega}(\VC_1-\VC_2)\epsb(u_0)\circ\Phi_\eps:\epsb(\PPi_\eps)\;dx\\
&+\dashint_{\omega}(\VC_1-\VC_2)\epsb(u_0)\circ\Phi_\eps:\epsb(p_0)\circ\Phi_\eps\;dx\\
&-\dashint_{\omega}(f_1\circ\Phi_\eps-f_2\circ\Phi_\eps)\cdot(\PPi_\eps)\;dx\\
&-\dashint_{\omega}(f_1\circ\Phi_\eps-f_2\circ\Phi_\eps)\cdot p_0\circ\Phi_\eps\;dx.
\end{split}
\end{align}
By Hölder's inequality, Lemma~\ref{lma:scaling2}, item (b), (c) and Theorem \ref{thm:main_Q1} one readily checks that $\PPi_\eps\rightarrow\PPi$ in $L_1(\omega)^d$ and $\epsb(\Qi_\eps)\rightarrow \epsb(\Qi)$ in $L_2(\omega)^{d\times d}$. Hence, we deduce

\begin{align}
\begin{split}
\pimL(0,u_0,p_0)=&\dashint_{\omega}(\VC_1-\VC_2)\epsb(u_0)(x_0):\epsb(\PPi)\;dx\\
&+\gamma_f(f_1(x_0)-f_2(x_0))\cdot u_0(x_0)\\
&+(\VC_1-\VC_2)\epsb(u_0)(x_0):\epsb(p_0)(x_0)\\
&-(f_1(x_0)-f_2(x_0))\cdot p_0(x_0).
\end{split}
\end{align}
Therefore, the first order topological derivative is given by
\begin{align}\label{eq:first_der_am}
\begin{split}
d\mathcal{J}(\emptyset,\omega)(x_0)=&\dashint_{\omega}(\VC_1-\VC_2)\epsb(u_0)(x_0):\epsb(\PPi)\;dx+(\VC_1-\VC_2)\epsb(u_0)(x_0):\epsb(p_0)(x_0)\\
&+\gamma_f(f_1(x_0)-f_2(x_0))\cdot u_0(x_0)-(f_1(x_0)-f_2(x_0))\cdot p_0(x_0)\\
&+\frac{1}{|\omega|}\gamma_g\int_{\VR^d}|\nabla \Ui|^2\;dx,
\end{split}
\end{align}
with $\PPi$ defined in \eqref{eq:first_P} and $\Ui$ defined in \eqref{eq:first_U}.
Next, we compute the second order topological derivative. Therefore let $\lii(\eps):=\eps\li(\eps)$. By Proposition~\ref{prop:main_am}, item (ii), we have

\begin{equation}
d^2\mathcal{J}(\emptyset,\omega)(x_0)=\mRii(u_0,p_0)+\piimL(0,u_0,p_0),
\end{equation}
where
\begin{align*}
    \mRii(u_0,p_0) & =\lim_{\eps\searrow 0}\frac{\Cl(\eps,u_\eps,p_\eps)-\Cl(\eps,u_0,p_\eps)-\li(\eps)\mRi(u_0,p_0)}{\lii(\eps)}, \\
    \piimL(0,u_0,p_0)& =\lim_{\eps\searrow 0}\frac{\Cl(\eps,u_0,p_\eps)-\Cl(0,u_0,p_\eps)-\li(\eps)\piimL(0,u_0,p_0)}{\lii(\eps)},
\end{align*}
if the above limits exist. Dividing \eqref{eq:r1_am} by $\eps$, it follows that 
\begin{align}\label{eq:r2_am}
\begin{split}
\mRii(u_0,p_0)&=\lim_{\eps \searrow 0}\frac{\gamma_g}{\eps}\left[\int_{\Dsf_\eps}|\nabla \Ui_\eps|^2-|\nabla\Ui|^2\;dx-\int_{\VR^d\setminus\Dsf_\eps}|\nabla\Ui|^2\;dx\right]\\
&=\lim_{\eps \searrow 0}\gamma_g\left[\int_{\Dsf_\eps}(\Ui_\eps+\Ui):(\eps^{-1}[\Ui_\eps-\Ui])\;dx-\eps^{-1}\int_{\VR^d\setminus\Dsf_\eps}|\nabla\Ui|^2\;dx\right]\\
&=2\gamma_g\frac{1}{|\omega|}\int_{\VR^d}\nabla\Ui:\nabla\Uii\;dx,
\end{split}
\end{align}
where we used that $\nabla\Ui_\eps\rightarrow\nabla\Ui$ in $L_2(\VR^d)^{d\times d}$ (see Theorem \ref{thm:main_U1}) and $\eps^{-1}(\nabla\Ui_\eps-\nabla\Ui)\rightarrow\nabla\Uii$ in $L_2(\VR^d)^{d\times d}$ (see Remark \ref{rem:U2}). The integral term over the exterior domain vanishes due to the asymptotic behaviour of $\Ui$.

In order to compute $\piimL(0,u_0,p_0)$, we use \eqref{eq:l1_am} to get

\begin{align}\label{eq:l2_am}
\begin{split}
&\frac{\Cl(\eps,u_0,p_\eps)-\Cl(0,u_0,p_\eps)-\li(\eps)\pimL(0,u_0,p_0)}{\lii(\eps)}=\\
&\gamma_f\dashint_{\omega}\eps^{-1}[(f_1\circ\Phi_\eps-f_2\circ\Phi_\eps)-(f_1(x_0)-f_2(x_0))]\cdot u_0\circ\Phi_\eps\;dx\\
&+\gamma_f\dashint_{\omega}(f_1(x_0)-f_2(x_0))\cdot\eps^{-1}[u_0\circ\Phi_\eps-u_0(x_0)]\;dx\\
&+\dashint_{\omega}(\VC_1-\VC_2)\epsb(u_0)\circ\Phi_\eps:\epsb(\eps^{-1}[\PPi_\eps-\PPi])\;dx\\
&+\dashint_{\omega}(\VC_1-\VC_2)\eps^{-1}[\epsb(u_0)\circ\Phi_\eps-\epsb(u_0)(x_0)]:\epsb(\PPi)\;dx\\
&+\dashint_{\omega}(\VC_1-\VC_2)\epsb(u_0)\circ\Phi_\eps:\eps^{-1}[\epsb(p_0)\circ\Phi_\eps-\epsb(p_0)(x_0)]\;dx\\
&+\dashint_{\omega}(\VC_1-\VC_2)\eps^{-1}[\epsb(u_0)\circ\Phi_\eps-\epsb(u_0)(x_0)]:\epsb(p_0)(x_0)\;dx\\
&-\dashint_{\omega}(f_1\circ\Phi_\eps-f_2\circ\Phi_\eps)\cdot(\PPi_\eps)\;dx\\
&-\dashint_{\omega}(f_1\circ\Phi_\eps-f_2\circ\Phi_\eps)\cdot\eps^{-1}[p_0\circ\Phi_\eps-p_0(x_0)]\;dx\\
&-\dashint_{\omega}\eps^{-1}[(f_1\circ\Phi_\eps-f_2\circ\Phi_\eps)-(f_1(x_0)-f_2(x_0))]\cdot p_0(x_0)\;dx.
\end{split}
\end{align}
Now considering Theorem~\ref{thm:main_P2}, item (ii), we have $\eps^{-1}[\epsb(\PPi_\eps)-\epsb(\PPi)]\rightarrow \epsb(\Pii)$ in $L_2(\omega)^{d\times d}$ and by Theorem \ref{thm:main_P1} and the Gagliardo-Nirenberg inequality we get $\PPi_\eps\rightarrow \PPi$ in $L_1(\omega)^d$. Thus, passing to the limit $\eps\searrow 0$ in \eqref{eq:l2_am} we conclude
\begin{align}
\begin{split}
\piimL(0,u_0,p_0)=&\gamma_f\dashint_{\omega}\nabla[f_1(x_0)-f_2(x_0)]x\cdot u_0(x_0)\; dx+\gamma_f\dashint_{\omega}[f_1(x_0)-f_2(x_0)]\cdot\nabla u_0(x_0)x\; dx\\
&+\dashint_{\omega}(\VC_1-\VC_2)\epsb(u_0)(x_0):\epsb(\Pii)\; dx+\dashint_{\omega}(\VC_1-\VC_2)[\nabla\epsb(u_0)(x_0)x]:\epsb(\PPi)\; dx\\
&+\dashint_{\omega}(\VC_1-\VC_2)\epsb(u_0)(x_0):[\nabla(\epsb(p_0))(x_0)x]\; dx \\
& +\dashint_{\omega}(\VC_1-\VC_2)[\nabla(\epsb(u_0))(x_0)x]:\epsb(p_0)(x_0)\; dx\\
&-\dashint_{\omega}[f_1(x_0)-f_2(x_0)]\cdot\nabla p_0(x_0)x\; dx- \dashint_{\omega}\nabla[f_1(x_0)-f_2(x_0)]x\cdot p_0(x_0)\; dx\\
&-\dashint_{\omega}(f_1(x_0)-f_2(x_0))\cdot(\PPi)\;dx.
\end{split}
\end{align}
Thus, the second order topological derivative is given by
\begin{align}
\begin{split}
d^2\mathcal{J}(\emptyset,\omega)(x_0)=&\gamma_f\dashint_{\omega}\nabla[f_1(x_0)-f_2(x_0)]x\cdot u_0(x_0)\; dx+\gamma_f\dashint_{\omega}[f_1(x_0)-f_2(x_0)]\cdot\nabla u_0(x_0)x\; dx\\
&+\dashint_{\omega}(\VC_1-\VC_2)\epsb(u_0)(x_0):\epsb(\Pii)\; dx+\dashint_{\omega}(\VC_1-\VC_2)[\nabla\epsb(u_0)(x_0)x]:\epsb(\PPi)\; dx\\
&+\dashint_{\omega}(\VC_1-\VC_2)\epsb(u_0)(x_0):[\nabla(\epsb(p_0))(x_0)x]\; dx \\
& +\dashint_{\omega}(\VC_1-\VC_2)[\nabla(\epsb(u_0))(x_0)x]:\epsb(p_0)(x_0)\; dx\\
&-\dashint_{\omega}[f_1(x_0)-f_2(x_0)]\cdot\nabla p_0(x_0)x\; dx- \dashint_{\omega}\nabla[f_1(x_0)-f_2(x_0)] x\cdot p_0(x_0)\; dx\\
&-\dashint_{\omega}(f_1(x_0)-f_2(x_0))\cdot(\PPi)\;dx+2\gamma_g\frac{1}{|\omega|}\int_{\VR^d}\nabla\Ui:\nabla\Uii\;dx,
\end{split}
\end{align}
with $\PPi$ defined in \eqref{eq:first_P}, $\Ui$ defined in \eqref{eq:first_U}, $\Pii$ defined in \eqref{eq:Piih},\eqref{eq:Piit} and $\Uii$ defined in \eqref{eq:Uiih},\eqref{eq:Uiit}.


\subsection{Averaged adjoint method}\label{sec:deriv_av}

We start with the first order topological derivative. Therefore let $\li(\eps):=|\omega_\eps|$. By Proposition \ref{prop:main_av} item (i) we have

\begin{equation}
d\mathcal{J}(\emptyset,\omega)(x_0)=\mRi(u_0,q_0)+\pimL(0,u_0,q_0),
\end{equation}
where
\begin{align*}
    \mRi(u_0,q_0)& =\lim_{\eps\searrow 0}\frac{\Cl(\eps,u_0,q_\eps)-\Cl(\eps,u_0,q_0)}{\li(\eps)}, \\
    \pimL(0,u_0,q_0)& =\lim_{\eps\searrow 0}\frac{\Cl(\eps,u_0,q_0)-\Cl(0,u_0,q_0)}{\li(\eps)},
\end{align*}
if the above limits exist. Thus, we start computing $\mRi(u_0,q_0)$:

\begin{align}\label{eq:r1_av}
\begin{split}
\frac{\Cl(\eps,u_0,q_\eps)-\Cl(\eps,u_0,q_0)}{\li(\eps)}&=\frac{1}{|\omega_\eps|}[J_\eps(u_0)+a_\eps(u_0,q_\eps)-f_\eps(q_\eps)-J_\eps(u_0)-a_\eps(u_0,q_0)+f_\eps(q_0)]\\
=\frac{1}{|\omega_\eps|}&[a_\eps(u_0,q_\eps-q_0)-f_\eps(q_\eps-q_0)]\\
=\frac{1}{|\omega_\eps|}&\left[\int_{\omega_\eps}(\VC_1-\VC_2)\epsb(u_0):\epsb(q_\eps-q_0)\; dx-\int_{\omega_\eps}(f_1-f_2)\cdot(q_\eps-q_0)\;dx\right]\\
=\dashint_{\omega}&(\VC_1-\VC_2)\epsb(u_0)\circ\Phi_\eps:\epsb(\Qi_\eps)\; dx-\eps\dashint_{\omega}(f_1-f_2)\circ\Phi_\eps(\Qi_\eps)\;dx.
\end{split}
\end{align}
Since $u_0\in C^3(B_\delta(x_0))$ for $\delta>0$ small and by Theorem \ref{thm:main_Q1} $\epsb(\Qi_\eps)\rightarrow \epsb(\Qi)$ in $L_2(\omega)^{d\times d}$ as $\eps$ tends to zero, we have
\begin{equation}
\lim_{\eps\searrow 0}\int_{\omega}(\VC_1-\VC_2)\epsb(u_0)\circ\Phi_\eps:\epsb(\Qi_\eps)\; dx=\int_{\omega}(\VC_1-\VC_2)\epsb(u_0)(x_0):\epsb(\Qi)\; dx.
\end{equation}
Furthermore, applying Hölder's inequality, Lemma \ref{lma:scaling2} item (b), (c) and Theorem \ref{thm:main_Q1}, one readily checks that $\Qi_\eps\rightarrow\Qi$ in $L_1(\omega)^d$. Thus, we deduce 
\begin{equation}
    \lim_{\eps\searrow 0}\eps\int_{\omega}[(f_1-f_2)\circ\Phi_\eps]\cdot\Qi_\eps\;dx=0.
\end{equation}
It follows that $\mRi(u_0,q_0)=\dashint_{\omega}(\VC_1-\VC_2)\epsb(u_0)(x_0):\epsb(\Qi)\; dx$. Next we compute $\pimL(0,u_0,q_0)$. 
For this we note for $\eps >0$:

\begin{align}\label{eq:l1_av}
\begin{split}
\frac{\Cl(\eps,u_0,q_0)-\Cl(0,u_0,q_0)}{\li(\eps)}=&\frac{1}{|\omega_\eps|}[(J_\eps(u_0)-J_0(u_0))+(a_\eps(u_0,q_0)-a_0(u_0,q_0))-(f_\eps(q_0)-f_0(q_0))]\\
=&\frac{1}{|\omega_\eps|}\bigg[\gamma_f\int_{\omega_\eps}(f_1-f_2)\cdot u_0\;dx+\int_{\omega_\eps}(\VC_1-\VC_2)\epsb(u_0):\epsb(q_0)\; dx \\
 & \hspace{5cm}-\int_{\omega_\eps}(f_1-f_2)\cdot q_0\;dx\bigg]\\
=&\gamma_f\dashint_{\omega}(f_1-f_2)\circ \Phi_\eps \cdot u_0\circ \Phi_\eps\;dx -\dashint_{\omega}(f_1-f_2)\circ \Phi_\eps \cdot q_0\circ \Phi_\eps\;dx
\\
& +\dashint_{\omega}(\VC_1-\VC_2)\epsb(u_0)\circ \Phi_\eps:\epsb(q_0)\circ \Phi_\eps\; dx.
\end{split}
\end{align}
Now, since $u_0,q_0,f_1,f_2$ are smooth in a neighbourhood of $x_0$, we get
\begin{equation}
    \pimL(0,u_0,q_0)=\left[\gamma_f (f_1-f_2)\cdot u_0+(\VC_1-\VC_2)\epsb(u_0):\epsb(q_0)-(f_1-f_2)\cdot q_0\right](x_0).
\end{equation}
Hence, the first topological derivative is given by
\begin{align}\label{eq:first_der_av}
\begin{split}
d\mathcal{J}(\emptyset,\omega)(x_0)=&\dashint_{\omega}(\VC_1-\VC_2)\epsb(u_0)(x_0):\epsb(\Qi)\; dx+(\VC_1-\VC_2)\epsb(u_0)(x_0):\epsb(q_0)(x_0)\\
&+\gamma_f (f_1(x_0)-f_2(x_0))\cdot u_0(x_0)-(f_1(x_0)-f_2(x_0))\cdot q_0(x_0),
\end{split}
\end{align}
 with $\Qi$ defined in \eqref{eq:first_Qh},\eqref{eq:first_Qt}.
\begin{remark}
An elegant way to represent the topological derivative is by the use of a polarisation tensor (see \cite{b_NOSO_2013a}, \cite{a_AMKAKI_2005a}). For this note that the mappings
\[\mathcal{F}^1: \VR^{d\times d}\rightarrow \VR^{d \times d},\quad\zeta\mapsto\dashint_\omega \epsb(\hat{Q}^{(1)}_\zeta)\; dx,\]
\[\mathcal{F}^2: \VR^{d\times d}\rightarrow \VR^{d \times d},\quad \zeta\mapsto\dashint_\omega \epsb(\tilde{Q}^{(1)}_\zeta)\; dx,\]
are linear, where $\hat{Q}^{(1)}_\zeta$ solves
\[\int_{\VR^d}\VC_\omega \epsb(\varphi):\epsb(\hat{Q}^{(1)}_\zeta)\; dx=\int_\omega (\VC_2-\VC_1)\epsb(\varphi):\zeta\; dx\quad\text{ for all }\varphi\in \dot{BL}(\VR^d)^d,\]
and $\tilde{Q}^{(1)}_\zeta$ solves
\[\int_{\VR^d}\VC_\omega \epsb(\varphi):\epsb(\tilde{Q}^{(1)}_\zeta)\; dx=-\gamma_g\int_{\VR^d} \nabla \Ui_\zeta:\nabla\varphi\; dx\quad\text{ for all }\varphi\in \dot{BL}(\VR^d)^d,\]
with $\Ui_\zeta$ satisfying
\[\int_{\VR^d}\VC_\omega \epsb(\Ui_\zeta):\epsb(\varphi)\; dx=\int_\omega (\VC_2-\VC_1)\zeta:\epsb(\varphi)\; dx\quad\text{ for all }\varphi\in \dot{BL}(\VR^d)^d.\]
Hence, there are tensors $\mathcal{P}^1,\mathcal{P}^2$ representing $\mathcal{F}^1,\mathcal{F}^2$ respectively, which we refer to as polarisation tensors. With their help we are able to rewrite \eqref{eq:first_der_av} the following way:
\begin{align}
d\mathcal{J}(\emptyset,\omega)(x_0)=&(\VC_1-\VC_2)\epsb(u_0)(x_0):\mathcal{P}^1\epsb(q_0)(x_0)+(\VC_1-\VC_2)\epsb(u_0)(x_0):\mathcal{P}^2\epsb(u_0)(x_0)\\
&+\left[\gamma_f (f_1-f_2)\cdot u_0+(\VC_1-\VC_2)\epsb(u_0):\epsb(q_0)-(f_1-f_2)\cdot q_0\right](x_0).
\end{align}
\end{remark}
Next we compute the second order topological derivative. Therefore let $\lii(\eps):=\eps\li(\eps)$. By Proposition \ref{prop:main_av} item (ii) we have
\begin{equation}
d^2\mathcal{J}(\emptyset,\omega)(x_0)=\mRii(u_0,q_0)+\piimL(0,u_0,q_0),
\end{equation}
where
\begin{align*}
    \mRii(u_0,q_0)& =\lim_{\eps\searrow 0}\frac{\Cl(\eps,u_0,q_\eps)-\Cl(\eps,u_0,q_0)-\li(\eps)\mRi(u_0,q_0)}{\lii(\eps)}, \\
    \piimL(0,u_0,q_0) & =\lim_{\eps\searrow 0}\frac{\Cl(\eps,u_0,q_0)-\Cl(0,u_0,q_0)-\li(\eps)\piimL(0,u_0,q_0)}{\lii(\eps)},
\end{align*}
if the above limits exist. We start computing $\mRii(u_0,q_0)$. Using \eqref{eq:r1_av} we get 
\begin{align}\label{eq:r2_av}
\begin{split}
\mRii(u_0,q_0)=&\lim_{\eps\searrow 0}\bigg[\dashint_{\omega}(\VC_1-\VC_2)\epsb(u_0)\circ\Phi_\eps:\epsb(\eps^{-1}\Qi_\eps)\; dx -\dashint_{\omega}(f_1-f_2)\circ\Phi_\eps\cdot(\Qi_\eps)\;dx\\
&-\dashint_{\omega}(\VC_1-\VC_2)\epsb(u_0)(x_0):\epsb(\eps^{-1}\Qi)\; dx\bigg]\\
=&\lim_{\eps\searrow 0}\bigg[\dashint_{\omega}(\VC_1-\VC_2)[\eps^{-1}(\epsb(u_0)\circ\Phi_\eps-\epsb(u_0)(x_0))]:\epsb(\Qi_\eps)\; dx\\
&\qquad +\dashint_{\omega}(\VC_1-\VC_2)\epsb(u_0)(x_0):\epsb(\eps^{-1}\Qi_\eps-\eps^{-1}\Qi)\; dx\\
&\qquad +\dashint_{\omega}(f_1-f_2)\circ\Phi_\eps\cdot(\Qi_\eps)\;dx\bigg]\\
=&\dashint_{\omega}(\VC_1-\VC_2)[\nabla\epsb(u_0)(x_0)x]:\epsb(\Qi)\; dx+\dashint_{\omega}(\VC_1-\VC_2)\epsb(u_0)(x_0):\epsb(\Qii)\; dx\\
&-\dashint_{\omega}(f_1(x_0)-f_2(x_0))\cdot(\Qi)\;dx,
\end{split}
\end{align}
where we used $\Qi_\eps\rightarrow \Qi$ in $L_1(\omega)^d$, $\epsb(\Qi_\eps)\rightarrow \epsb(\Qi)$ in $L_2(\omega)^d$ and by Theorem~\ref{thm:main_Q2}\newline $\eps^{-1}(\epsb(\Qi_\eps)-\epsb(\Qi))\rightarrow \epsb(\Qii)$ in $L_2(\omega)^d$. Next we compute $\piimL(0,u_0,q_0)$:

\begin{align}\label{eq:l2_av}
\begin{split}
\piimL(0,u_0,q_0)=&\lim_{\eps\searrow 0}\bigg[\gamma_f\dashint_{\omega}\eps^{-1}[(f_1-f_2)\circ \Phi_\eps \cdot u_0\circ \Phi_\eps - (f_1(x_0)-f_2(x_0)) \cdot u_0(x_0)]\;dx\\
&+\dashint_{\omega}\eps^{-1}[(\VC_1-\VC_2)\epsb(u_0)\circ \Phi_\eps:\epsb(q_0)\circ \Phi_\eps-(\VC_1-\VC_2)\epsb(u_0)(x_0):\epsb(q_0)(x_0)]\; dx\\
&-\dashint_{\omega}\eps^{-1}[(f_1-f_2)\circ \Phi_\eps \cdot q_0\circ \Phi_\eps-(f_1(x_0)-f_2(x_0)) \cdot q_0(x_0)]\;dx\bigg]\\
    =&\gamma_f\dashint_{\omega}\nabla[(f_1-f_2)](x_0) x\cdot u_0(x_0)\; dx+\gamma_f\dashint_{\omega}[f_1(x_0)-f_2(x_0)]\cdot\nabla u_0(x_0)x\; dx\\
&+\dashint_{\omega}(\VC_1-\VC_2)[\nabla(\epsb(u_0))(x_0)x]:\epsb(q_0)(x_0)\; dx \\
& +\dashint_{\omega}(\VC_1-\VC_2)\epsb(u_0)(x_0):[\nabla(\epsb(q_0))(x_0)x]\; dx\\
&-\dashint_{\omega}[\nabla(f_1-f_2)(x_0)] x\cdot q_0(x_0)\; dx-\dashint_{\omega}[f_1(x_0)-f_2(x_0)]\cdot\nabla q_0(x_0)x\; dx,
\end{split}
\end{align}
where again we used the smoothness of $u_0,q_0,f_1,f_2$ in a neighbourhood of $x_0$ in the last step. This is the claimed formula \eqref{eq:l1_av}. Furthermore, combining \eqref{eq:r2_av} and \eqref{eq:l2_av}, we obtain the final formula for the second order topological derivative:
\begin{align}
\begin{split}
    d^2\mathcal{J}(\emptyset,\omega)(x_0)=&\gamma_f\dashint_{\omega} [\nabla(f_1-f_2)(x_0)] x\cdot u_0(x_0)\; dx+\gamma_f\dashint_{\omega}[f_1(x_0)-f_2(x_0)]\cdot\nabla u_0(x_0)x\; dx\\
&+\dashint_{\omega}(\VC_1-\VC_2)[\nabla(\epsb(u_0))(x_0)x]:\epsb(q_0)(x_0)\; dx \\
& +\dashint_{\omega}(\VC_1-\VC_2)\epsb(u_0)(x_0):[\nabla(\epsb(q_0))(x_0)x]\; dx\\
&-\dashint_{\omega}[\nabla(f_1-f_2)](x_0) x\cdot q_0(x_0)\; dx-\dashint_{\omega}[f_1(x_0)-f_2(x_0)]\cdot\nabla q_0(x_0)x\; dx\\
&+\dashint_{\omega}(\VC_1-\VC_2)[\nabla\epsb(u_0)(x_0)x]:\epsb(\Qi)\; dx+\dashint_{\omega}(\VC_1-\VC_2)\epsb(u_0)(x_0):\epsb(\Qii)\; dx\\
&-\dashint_{\omega}(f_1(x_0)-f_2(x_0))\cdot(\Qi)\;dx,
\end{split}
\end{align}
 with $\Qi$ defined in \eqref{eq:first_Qh},\eqref{eq:first_Qt} and $\Qii$ defined in \eqref{eq:Qiih},\eqref{eq:Qiit}.


 \subsection{Delfour's method}\label{sec:deriv_df}
At last we consider Delfour's method to compute the topological derivative. Therefore recall that by Proposition \ref{prop:main_df} item (i) we have

\begin{align}
d\mathcal{J}(\emptyset,\omega)(x_0)=\mRi_1(u_0, p_0) + \mRi_2(u_0, p_0) + \pimL(0, u_0, p_0),
\end{align}
where we let $\li(\eps):=|\omega_\eps|$ and assume that the limits

\begin{align}
        \mRi_1(u_0, p_0) :=& \underset{\eps \searrow 0}{\mbox{lim}} \; \frac{1}{\li(\eps)}  \int_0^1 \left(\partial_u \Cl(\eps,s u_\eps + (1-s)u_0, p_0) -   \partial_u \Cl(\eps, u_0, p_0)\right)(u_\eps - u_0) \; ds,\\
        \mRi_2(u_0, p_0) :=& \underset{\eps \searrow 0}{\mbox{lim}}\; \frac{1}{\li(\eps)}(\partial_u \Cl(\eps, u_0,p_0) - \partial_u \Cl(0,u_0,p_0))(u_\eps - u_0),\\
        \pimL(0, u_0, p_0) :=&  \lim_{\eps\searrow 0} \;\frac{1}{\li(\eps)}(\Cl(\eps,u_0, p_0) - \Cl(0,u_0, p_0)),
\end{align}
exist. We now compute the limit of each term. Plugging in the definition of $\Cl(\eps,u,v)$, we get for $\eps >0$, 
\begin{align}\label{eq:r11_df}
\begin{split}
\frac{1}{\li(\eps)}  \int_0^1 &\left(\partial_u \Cl(\eps,s u_\eps + (1-s)u_0, p_0) -   \partial_u \Cl(\eps, u_0, p_0)\right)(u_\eps - u_0) \; ds\\
&=\frac{1}{|\omega_\eps|}\int_0^1 \partial_u J(\eps,s u_\eps + (1-s)u_0, p_0)(u_\eps-u_0) -   \partial_u J(\eps, u_0, p_0)(u_\eps - u_0) \; ds\\
&=\frac{\eps}{|\omega|}\gamma_m\|\Ui_\eps\|_{L_2(\Gamma_\eps^m)^d}^2+\frac{1}{|\omega|}\gamma_g\|\nabla \Ui_\eps\|_{L_2(\Dsf_\eps)^{d\times d}}^2.
\end{split}
\end{align}
Hence, passing to the limit $\eps\searrow 0$ yields $\mRi_1(u_0,p_0)=\frac{\gamma_g}{|\omega|}\|\nabla\Ui\|_{L_2(\VR^d)^{d\times d}}^2$ (see \eqref{eq:r1_am}). Furthermore, we have
\begin{align}\label{eq:r12_df}
\begin{split}
\frac{1}{\li(\eps)}(\partial_u \Cl(\eps, u_0,p_0) - \partial_u \Cl(0,u_0,p_0))(u_\eps - u_0)=&\frac{1}{|\omega_\eps|}\gamma_f\int_{\omega_\eps}(f_1-f_2)\cdot(u_\eps-u_0)\; dx\\
&+\frac{1}{|\omega_\eps|}\int_{\omega_\eps}(\VC_1-\VC_2)\epsb(u_\eps-u_0):\epsb(p_0)\; dx\\
=&\eps\gamma_f\dashint_{\omega}(f_1\circ \Phi_\eps-f_2\circ \Phi_\eps)\cdot(\Ui_\eps)\; dx\\
&+\dashint_{\omega}(\VC_1-\VC_2)\epsb(\Ui_\eps):\epsb(p_0)\circ \Phi_\eps\; dx.
\end{split}
\end{align}
Since $\Ui_\eps\rightarrow\Ui$ in $L_1(\omega)^d$ and $\epsb(\Ui_\eps)\rightarrow \epsb(\Ui)$ in $L_2(\omega)^{d\times d}$ for $\eps\searrow 0$, which can be seen similarly to the analogous results for $\PPi$ and $\Qi$, it follows
\begin{align}
\mRi_2(u_0,p_0)=\dashint_{\omega}(\VC_1-\VC_2)\epsb(\Ui):\epsb(p_0)(x_0)\; dx.
\end{align}
A similar computation yields
\begin{align}\label{eq:l1_df}
\begin{split}
    \pimL(0, u_0, p_0) =&\left[\gamma_f (f_1-f_2)\cdot u_0+(\VC_1-\VC_2)\epsb(u_0):\epsb(p_0)-(f_1-f_2)\cdot p_0\right](x_0).
\end{split}
\end{align}
A more detailed derivation of $\pimL(0,u_0,p_0)$ can be found in \eqref{eq:l1_av} by substituting $p_0$ for $q_0$. Combining these limits, yields
\begin{align}
\begin{split}
d\mathcal{J}(\emptyset,\omega)(x_0)=&\dashint_{\omega}(\VC_1-\VC_2)\epsb(\Ui):\epsb(p_0)(x_0)\; dx+(\VC_1-\VC_2)\epsb(u_0)(x_0):\epsb(p_0)(x_0)\\
&+\gamma_f (f_1(x_0)-f_2(x_0))\cdot u_0(x_0)-(f_1(x_0)-f_2(x_0))\cdot p_0(x_0)\\
&+\gamma_g\frac{1}{|\omega|}\int_{\VR^d}|\nabla\Ui|^2\;dx,
\end{split}
\end{align}
with $\Ui$ defined in \eqref{eq:first_U}.
Next we compute the second order topological derivative. In view of Proposition~\ref{prop:main_df}, item (ii), we first show that the following limits exist:
\begin{align}
    \mRii_1(u_0,p_0) &:= \underset{\eps \searrow 0}{\mbox{lim }} \frac{1}{\lii(\eps)}\int_0^1 ( \partial_u \Cl(\eps,su_\eps + (1-s)u_0,p_0)\nonumber  \\
                     & \hspace{4cm}- \partial_u\Cl(\eps, u_0,p_0)) (u_\eps - u_0)\; ds - \li(\eps) \mRi_1(u_0, p_0), \\
    \mRii_2(u_0,p_0) &:=\underset{\eps \searrow 0}{\mbox{lim }} \frac{1}{\lii(\eps)}\left[(\partial_u \Cl(\eps,u_0,p_0) - \partial_u \Cl(0,u_0,p_0))(u_\eps-u_0) - \li(\eps) \mRi_2(u_0, p_0) \right], \\
    \piimL(0,u_0, p_0) &:=\underset{\eps \searrow 0}{\mbox{lim }} \frac{1}{\lii(\eps)} \left[ \Cl(\eps,u_0,p_0) - \Cl(0,u_0,p_0) - \li(\eps) \pimL(0, u_0, p_0) \right],
\end{align}
where $\lii(\eps):=\eps\li(\eps)$. Then the topological derivative is given by \[d^2\mathcal{J}(\Omega)(x_0)=\mRii_1(u_0,p_0)+\mRii_2(u_0,p_0)+\piimL(0,u_0, p_0).\]
Similar to \eqref{eq:r2_am} it follows that \[\mRii_1(u_0,p_0)=2\gamma_g\frac{1}{|\omega|}\int_{\VR^d}\nabla\Ui:\nabla\Uii\;dx.\] Furthermore, we have

\begin{align}\label{eq:r22_df}
\begin{split}
\frac{1}{\lii(\eps)}(\partial_u \Cl(\eps, u_0,p_0) - \partial_u \Cl(0,u_0,p_0))&(u_\eps - u_0) - \li(\eps) \mRi_2(u_0, p_0)\\
=&\gamma_f\dashint_{\omega}(f_1\circ \Phi_\eps-f_2\circ \Phi_\eps)\cdot(\Ui_\eps)\; dx\\
&+\dashint_{\omega}(\VC_1-\VC_2)\epsb(\eps^{-1}[\Ui_\eps-\Ui]):\epsb(p_0)\circ \Phi_\eps\; dx\\
&+\dashint_{\omega}(\VC_1-\VC_2)\epsb(\Ui):[\eps^{-1}(\epsb(p_0)\circ \Phi_\eps-\epsb(p_0)(x_0))]\; dx.
\end{split}
\end{align}
Hence, passing to the limit $\eps\searrow 0$, we deduce
\begin{align}
\begin{split}
\mRii_2(u_0, p_0) =&\dashint_{\omega}(\VC_1-\VC_2)\epsb(\Uii):\epsb(p_0)(x_0)\; dx+\dashint_{\omega}(\VC_1-\VC_2)\epsb(\Ui):[\nabla\epsb(p_0)(x_0)x]\; dx\\
&+\gamma_f\dashint_{\omega}(f_1(x_0)-f_2(x_0))\cdot\Ui\; dx,
\end{split}
\end{align}
where we used $\eps^{-1}[\epsb(\Ui_\eps)-\epsb(\Ui)]\rightarrow\epsb(\Uii)$ in $L_2(\omega)^{d\times d}$ and $\Ui_\eps\rightarrow \Ui$ in $L_1(\omega)^d$. Additionally, from \eqref{eq:l2_av} we get
\begin{align}
\begin{split}
\piimL(0,u_0,p_0)=&\gamma_f\dashint_{\omega}\nabla[f_1(x_0)-f_2(x_0)]x\cdot u_0(x_0)\; dx+\gamma_f\dashint_{\omega}[f_1(x_0)-f_2(x_0)]\cdot\nabla u_0(x_0)x\; dx\\
&+\dashint_{\omega}(\VC_1-\VC_2)[\nabla(\epsb(u_0))(x_0)x]:\epsb(p_0)(x_0)\; dx \\
& +\dashint_{\omega}(\VC_1-\VC_2)\epsb(u_0)(x_0):[\nabla(\epsb(p_0))(x_0)x]\; dx\\
&-\dashint_{\omega}\nabla[f_1(x_0)-f_2(x_0)]x\cdot p_0(x_0)\; dx-\dashint_{\omega}[f_1(x_0)-f_2(x_0)]\cdot\nabla p_0(x_0)x\; dx.
\end{split}
\end{align}
We obtain
\begin{align}
\begin{split}
d^2\mathcal{J}(\emptyset,\omega)(x_0)=&\gamma_f\dashint_{\omega}(f_1(x_0)-f_2(x_0))\cdot\Ui\; dx+2\gamma_g\frac{1}{|\omega|}\int_{\VR^d}\nabla\Ui:\nabla\Uii\;dx\\
&+\dashint_{\omega}(\VC_1-\VC_2)\epsb(\Uii):\epsb(p_0)(x_0)\; dx+\dashint_{\omega}(\VC_1-\VC_2)\epsb(\Ui):[\nabla\epsb(p_0)(x_0)x]\; dx\\
&+\gamma_f\dashint_{\omega}\nabla[f_1(x_0)-f_2(x_0)]x\cdot u_0(x_0)\; dx+\gamma_f\dashint_{\omega}[f_1(x_0)-f_2(x_0)]\cdot\nabla u_0(x_0)x\; dx\\
&+\dashint_{\omega}(\VC_1-\VC_2)[\nabla(\epsb(u_0))(x_0)x]:\epsb(p_0)(x_0)\; dx \\
& +\dashint_{\omega}(\VC_1-\VC_2)\epsb(u_0)(x_0):[\nabla(\epsb(p_0))(x_0)x]\; dx\\
&- \dashint_{\omega}\nabla[f_1(x_0)-f_2(x_0)]x\cdot p_0(x_0)\; dx-\dashint_{\omega}[f_1(x_0)-f_2(x_0)]\cdot\nabla p_0(x_0)x\; dx,
\end{split}
\end{align}
with $\Ui$ defined in \eqref{eq:first_U} and $\Uii$ defined in \eqref{eq:Uiih},\eqref{eq:Uiit}. This finishes the proof of the computation of the topological derivative using Delfour's method.

\begin{remark}
We like to point out that using the defining equations of the boundary layer correctors, one can show that all three expressions of the topological derivative coincide and therefore all methods lead to the same result. To get an idea, we show that the first topological derivative of Amstutz' approach and the averaged adjoint method are the same. Plugging in $\varphi=\Qi$ in \eqref{eq:first_U} yields
\[\int_\omega(\VC_1-\VC_2)\epsb(u_0)(x_0):\epsb(\Qi)\; dx=-\int_{\VR^d}\VC_\omega \epsb(\Ui):\epsb(\Qi)\; dx.\]
Additionally, by choosing $\varphi=\PPi$ in\eqref{eq:first_P} and $\varphi=\Ui$ in \eqref{eq:first_Qh},\eqref{eq:first_Qt} we get
\begin{align*}
\int_\omega(\VC_1-\VC_2)&\epsb(u_0)(x_0):\epsb(\PPi)\; dx+\int_{\VR^d}|\nabla \Ui|^2\; dx=\\
&\underbrace{-\int_{\VR^d}\VC_\omega\epsb(\Ui):\epsb(\PPi)\; dx+\int_\omega (\VC_2-\VC_1)\epsb(q_0)(x_0):\epsb(\Ui)\; dx}_{=0}\\
&-\int_{\VR^d}\VC_\omega \epsb(\Ui):\epsb(\Qi)\; dx.
\end{align*}
Now using $p_0=q_0$ it follows that both results \eqref{eq:first_der_am},\eqref{eq:first_der_av} coincide.
\end{remark}


\section{Conclusion}

In the present work we review three different methods to compute the second order topological derivative and illustrate their methodologies by applying them to a linear elasticity model. To give a better insight into the differences of these methods, the cost functional consists of three terms: the compliance, a $L_2$ tracking type over a part of the Neumann boundary and a gradient tracking type over the whole domain, whereas the first one is linear and the latter two are quadratic.\newline
Amstutz' method to compute the topological derivative requires beside the analysis of the direct state $u_\eps$ also the analysis of the adjoint state $p_\eps$. Even though this seems to lead to additional work, we like to point out that, due to the $\eps$-dependence of the defining equation of the adjoint state variable, the analysis of $p_\eps$ resembles the analysis of the direct state and can be done in a similar way. The computation of the topological derivative for the compliance term is straight forward, whereas checking the occurring limits for the nonlinearities requires the asymptotic analysis of $u_\eps$ on the whole domain.\newline
The averaged adjoint method shifts the work from the computation of the topological derivative to the asymptotic analysis of the averaged adjoint variable $q_\eps$. Since the defining equation depends on the state variable $u_\eps$, the asymptotic analysis of $p_\eps$ does not resemble the analysis of $u_\eps$ and therefore needs to be treated differently. In fact, we like to mention that again the nonlinearities of the cost functional are the reason for additional work during this process.
When it comes to the computation of the topological derivative, the averaged adjoint method simplifies the procedure as it only requires convergence of $q_\eps$ on a small subdomain of size $\eps$.\newline
Finally, Delfour's method resembles Amstutz' method as it requires the asymptotic analysis of $u_\eps$ on the whole domain, yet it does not need the analysis of the adjoint state $p_\eps$. This advantage seems to come with the shortcoming, that this method is only applicable to a selective set of cost functions.\newline
To recapitulate, each method proposed in this work has some advantages and disadvantages over the others. The decision, which method fits the actual problem setting the best, greatly depends on the actual cost function as well as the properties of the underlying partial differential equation.

\subsection*{Acknowledgements}
Phillip Baumann has been funded by the Austrian Science Fund (FWF) project P 32911.


\section{Appendix}

Here we derive equations satisfied by the variations of the adjoint and average adjoint variable respectively.

\subsection{Derivation and estimation of equation \eqref{eq:P1_rhs}}\label{sec:ad_P1}
In order to compute $G_\eps^1$, we subtract \eqref{eq:adj_per} and \eqref{eq:adj_unper} to obtain
\begin{align}
\begin{split}
\int_\Dsf \VC_{\omega_\eps}\epsb(\varphi):\epsb(p_\eps-p_0)\; dx=&\int_{\omega_\eps}(\VC_2-\VC_1)\epsb(\varphi):\epsb(p_0)\; dx\\
&+\gamma_f\int_{\omega_\eps}(f_2-f_1)\cdot\varphi\; dx,
\end{split}
\end{align}
for all $\varphi\in H^1_\Gamma(\Dsf)^d$.
Next we change variables according to the transformation $y=\Phi_\eps(x)$, multiply with $\eps^{1-d}$ and subtract
\begin{align}
\begin{split}
\int_{\Dsf_\eps}\VC_\omega \epsb(\varphi):\epsb(\PPi)\;dx=&\int_\omega(\VC_2-\VC_1)\epsb(\varphi):\epsb(p_0)(x_0)\;dx+\int_{\Gamma_\eps^N} \VC_2^\top\epsb(\PPi)n\cdot\varphi\;dS
\end{split}
\end{align}
to conclude
\begin{align}\label{eq:diff_P1}
\int_{\Dsf_\eps} \VC_{\omega}\epsb(\varphi):\epsb(\PPi_\eps-\PPi)\; dx=G^1_\eps(\varphi),
\end{align}
for all $\varphi\in H^1_{\Gamma_\eps}(\Dsf_\eps)^d$ with the notation

\begin{align}
\begin{split}
G^1_\eps(\varphi)=&\int_{\omega}(\VC_2-\VC_1)\epsb(\varphi):[\epsb(p_0)\circ \Phi_\eps-\epsb(p_0)(x_0)]\; dx\\
&+\eps\gamma_f\int_{\omega}(f_2-f_1)\circ \Phi_\eps\cdot\varphi\; dx\\
&-\int_{\Gamma_\eps^N} \VC_2^\top\epsb(\PPi)n\cdot\varphi\;dS.
\end{split}
\end{align}

Now we can find a constant $C>0$, such that the following estimates hold:
\begin{itemize}

\item $\left|\int_{\omega}(\VC_2-\VC_1)\epsb(\varphi):[\epsb(p_0)\circ \Phi_\eps-\epsb(p_0)(x_0)]\; dx\right|\le C \eps \|\varphi\|_\eps$, which follows from a Taylor's expansion of $\epsb(p_0)\circ \Phi_\eps$ in $x_0$ and Hölder's inequality.

\item $\left|\eps\gamma_f\int_{\omega}(f_2-f_1)\circ \Phi_\eps\cdot\varphi\; dx\right|\le C \eps\|\varphi\|_\eps$, for $d=3$ and $\left|\eps\gamma_f\int_{\omega}(f_2-f_1)\circ \Phi_\eps\cdot\varphi\; dx\right|\le C \eps\|\varphi\|_\eps^{1-\alpha}$, for $d=2$ which is a consequence of Hölder's inequality and Lemma \ref{lma:scaling2} item (b) and (c).

\item $\left|\int_{\Gamma_\eps^N} \VC_2^\top\epsb(\PPi)n\cdot\varphi\;dS\right|\le C\eps^{\frac{d}{2}}\|\varphi\|_\eps$, which follows from Hölder's inequality, Lemma \ref{lma:scaling2} item (d) and Lemma~\ref{lma:remainder_est}, item (iii) with $m=d-1$.

\end{itemize}
Combining the previous estimates yields
\begin{equation}
\|G^1_\eps\|\le\begin{cases}C \eps\quad &\text{ for }d=3,\\C\eps^{1-\alpha}\quad &\text{ for }d=2.\end{cases}
\end{equation}


\subsection{Derivation and estimation of equation \eqref{eq:P2_rhs}}\label{sec:ad_P2}
We start by dividing \eqref{eq:diff_P1} by $\eps$ and subtract \eqref{eq:wi}, \eqref{eq:wii}, which can be formulated on the domain $\Dsf_\eps$ by a change of variables. Next we subtract \eqref{eq:Piih} \eqref{eq:Piit}, whereas these equations can be restricted to the domain $\Dsf_\eps$ by splitting the integral over $\VR^d$ and integrating by parts in the exterior domain. These operations leave us with
\begin{align}
\int_{\Dsf_\eps} \VC_{\omega}\epsb(\varphi):\epsb(\eps\Piii_\eps)\; dx=G^2_\eps(\varphi)+G^3_\eps(\varphi),
\end{align}
for all $\varphi\in H^1_{\Gamma_\eps}(\Dsf_\eps)^d$ where
\begin{align}
\begin{split}
G^2_\eps(\varphi)=&\int_{\omega}(\VC_2-\VC_1)\epsb(\varphi):[\eps^{-1}(\epsb(p_0)\circ \Phi_\eps-\epsb(p_0)(x_0))-\nabla\eps(p_0)(x_0)x]\; dx\\
&+\gamma_f\int_{\omega}[(f_2-f_1)\circ \Phi_\eps-(f_2(x_0)-f_1(x_0))]\cdot\varphi\; dx\\
&+\eps^{d-1}\int_\omega (\VC_2-\VC_1)\epsb(\varphi):[\epsb(\wi)\circ \Phi_\eps+\epsb(\wii)\circ \Phi_\eps]\;dx,
\end{split}
\end{align}
\begin{align}
\begin{split}
G^3_\eps(\varphi)=&-\eps^{-1}\int_{\Gamma_\eps^N} [\VC_2^\top\epsb(\PPi)-\eps^d\VC_2^\top\epsb(\Si)(\eps x)]n\cdot\varphi\;dS\\
&-\int_{\Gamma_\eps^N} [\VC_2^\top\epsb(\Pii)-\eps^{d-1}\VC_2^\top\epsb(\Sii)(\eps x)]n\cdot\varphi\;dS.
\end{split}
\end{align}
Now we want to estimate the norm of $G_\eps^k$, $k\in\{2,3\}$. Therefore let $\varphi\in H^1_{\Gamma_\eps}(\Dsf_\eps)^d$.
\begin{itemize}

\item Since $p_0$ is three times differentiable in a neighbourhood of $x_0$, there is a constant $C>0$, such that $|\eps^{-1}(\epsb(p_0)\circ \Phi_\eps-\epsb(p_0)(x_0))-\nabla \epsb(p_0)(x_0)x|\le C\eps$, for $x\in\omega$. Hence, Hölder's inequality yields
\begin{equation}
\left|\int_{\omega}(\VC_2-\VC_1)\epsb(\varphi):[\eps^{-1}(\epsb(p_0)\circ \Phi_\eps-\epsb(p_0)(x_0))-\nabla\epsb(p_0)(x_0)x]\; dx\right|\le C\eps \|\varphi\|_\eps.
\end{equation}

\item A Taylor expansion of $(f_2-f_1)\circ \Phi_\eps$ at $x_0$, Hölder's inequality and Lemma~\ref{lma:scaling2} item (b) yield
\begin{equation}
\left|\gamma_f\int_{\omega}[(f_2-f_1)\circ \Phi_\eps-(f_2(x_0)-f_1(x_0))]\cdot\varphi\; dx\right|\le C\eps \|\varphi\|_\eps,
\end{equation}
for a constant $C>0$.

\item Furthermore, by Hölder's inequality we get
\begin{equation}
\left|\eps^{d-1}\int_\omega (\VC_2-\VC_1)\epsb(\varphi):[\epsb(\wi)\circ \Phi_\eps+\epsb(\wii)\circ \Phi_\eps]\;dx\right|\le C\eps \|\varphi\|_\eps,
\end{equation}
for a constant $C>0$.
\end{itemize}
Combining the above results leaves us with $\|G^2_\eps\|\le C \eps$ for a constant $C>0$.
Next we consider the boundary integral terms:

\begin{itemize}

\item From Hölder's inequality, Lemma~\ref{lma:remainder_est} item (iii) with $m=d$ and the scaled trace inequality we get
\begin{equation}
\left|\eps^{-1}\int_{\Gamma_\eps^N} [\VC_2^\top\epsb(\PPi)-\eps^d\VC_2^\top\epsb(\Si)(\eps x)]n\cdot\varphi\;dS\right|\le C\eps^{\frac{d}{2}}\|\varphi\|_\eps,
\end{equation}
for a constant $C>0$.

\item Similarly, we deduce from Lemma~\ref{lma:remainder_est} item (iii) with $m=d-1$ that there is a constant $C>0$, such that
\begin{equation}
\left|\int_{\Gamma_\eps^N} [\VC_2^\top\epsb(\Pii)-\eps^{d-1}\VC_2^\top\epsb(\Sii)(\eps x)]n\cdot\varphi\;dS\right|\le C\eps^{\frac{d}{2}}\|\varphi\|_\eps.
\end{equation}

\end{itemize}
Thus, these estimates result in $\|G_\eps^3\|\le C\eps$ for a constant $C>0$.


\subsection{Derivation and estimation of equation \eqref{eq:Q1_rhs}}\label{sec:ad_Q1}

In order to compute a governing equation for $\Qi_\eps-\Qi$ we start by subtracting \eqref{eq:av_adj_per} and \eqref{eq:av_adj_unper}. Rearranging these terms leaves us with

\begin{align}
\begin{split}
    \int_\Dsf \VC_{\omega_\eps}\epsb(\varphi):\epsb(q_\eps-q_0)\; dx= & \int_{\omega_\eps}(\VC_2-\VC_1)\epsb(\varphi):\epsb(q_0)\; dx\\
&+\gamma_f\int_{\omega_\eps}(f_2-f_1)\cdot\varphi\; dx\\
&-\gamma_m\int_{\Gamma^m}(u_\eps-u_0)\cdot\varphi\;dS\\
&-\gamma_g\int_{\Dsf}[\nabla u_\eps-\nabla u_0]:\nabla\varphi\;dx,
\end{split}
\end{align}
for all $\varphi\in H^1_\Gamma(\Dsf)^d$. Thus, considering the definition of $\Ui_\eps$, a change of variables followed by subtracting 

\begin{align}
\begin{split}
    \int_{\Dsf_\eps}\VC_\omega \epsb(\varphi):\epsb(\Qi)\;dx=& \int_\omega(\VC_2-\VC_1)\epsb(\varphi):\epsb(q_0)(x_0)\;dx-\gamma_g\int_{\Dsf_\eps}\nabla\Ui:\nabla\varphi\; dx\\
&+\int_{\Gamma^N_\eps}\VC_2^\top \epsb(\Qi)n\cdot\varphi\; dS+\int_{\Gamma^N_\eps}\nabla \Ui n\cdot\varphi\; dS\\
&+\int_{\VR^d\setminus\Dsf_\eps}\underbrace{[\Div(\VC_2^\top \epsb(\Qi))+\gamma_g\Delta \Ui]}_{=0}\cdot\varphi\; dx,
\end{split}
\end{align}

yields

\begin{equation}\label{eq:lhs_G5}
\int_{\Dsf_\eps}\VC_\omega\epsb(\varphi):\epsb(\eps\Qii_\eps)\;dx=G_\eps^4(\varphi),
\end{equation}
where
\begin{align}\label{eq:G5}
\begin{split}
    G_\eps^4(\varphi)=& \int_{\omega}(\VC_2-\VC_1)\epsb(\varphi):[\epsb(q_0)\circ \Phi_\eps-\epsb(q_0)(x_0)]\; dx\\
&+\eps\gamma_f\int_{\omega}(f_2\circ \Phi_\eps-f_1\circ \Phi_\eps)\cdot\varphi\; dx\\
&-\eps\gamma_m\int_{\Gamma^m_\eps}(\Ui_\eps)\cdot\varphi\;dS\\
&-\int_{\Gamma^N_\eps}\VC_2^\top \epsb(\Qi)n\cdot\varphi\; dS\\
&-\gamma_g\int_{\Gamma^N_\eps}\nabla \Ui n\cdot\varphi\; dS\\
&-\gamma_g\int_{\Dsf_\eps}(\nabla\Ui_\eps-\nabla\Ui):\nabla\varphi\;dx,
\end{split}
\end{align}
for all $\varphi\in H^1_{\Gamma_\eps}(\Dsf_\eps)^d$.

Now let $\varphi\in H^1_{\Gamma_\eps}(\Dsf_\eps)^d$. There is a constant $C>0$ independent of $\eps$ and $\varphi$, such that

\begin{itemize}

\item $|\int_{\omega}(\VC_2-\VC_1)\epsb(\varphi):[\epsb(q_0)\circ \Phi_\eps-\epsb(q_0)(x_0)]\; dx|\le C \eps \|\varphi\|_\eps$, which can be seen by a Taylor's expansion of $q_0$ in $x_0$ and Hölder's inequality.

\item $|\eps\gamma_f\int_{\omega}(f_2\circ \Phi_\eps-f_1\circ \Phi_\eps)\cdot\varphi\; dx|\le C \eps\|\varphi\|_\eps$, which is a consequence of Hölder's inequality and Lemma \ref{lma:scaling2} item (b).

\item $|\int_{\Gamma^N_\eps}\VC_2^\top \epsb(\Qi)n\cdot\varphi\; dS|\le C\eps^{\frac{1}{2}}\|\varphi\|_\eps$, which is a consequence of Hölder's inequality, Lemma \ref{lma:remainder_est} item (iii) with $m=d-2$ and the scaled trace inequality.

\item $|\gamma_g\int_{\Gamma^N_\eps}\nabla \Ui n\cdot\varphi\; dS|\le C\eps^{\frac{d}{2}}\|\varphi\|_\eps$, which can be seen similarly.

\item $|\eps\gamma_m\int_{\Gamma^m_\eps}(\Ui_\eps)\cdot\varphi\;dS|\le C \eps\|\varphi\|_\eps$, which follows from Hölder's inequality, splitting $\|\Ui_\eps\|_{L_2(\Gamma^m_\eps)^d}\le\|\Ui_\eps-\Ui\|_{L_2(\Gamma^m_\eps)^d}+\|\Ui\|_{L_2(\Gamma^m_\eps)^d}$, the scaled trace inequality, Theorem \ref{thm:main_U1} and Lemma \ref{lma:remainder_est} item (i) with $m=d-1$.

\item $|\gamma_g\int_{\Dsf_\eps}(\nabla\Ui_\eps-\nabla\Ui):\nabla\varphi\;dx|\le C\eps \|\varphi\|_\eps$, which is a consequence of Hölder's inequality and Theorem \ref{thm:main_U1}.

\end{itemize}
Combining the above results leaves us with \[\|G_\eps^4\|\le C \eps^{\frac{1}{2}}.\]


\subsection{Derivation and estimation of equation \eqref{eq:Q2_rhs}}\label{sec:ad_Q2}

Due to the high number of terms, we derive the governing equation in more detail this time. Therefore, we formulate \eqref{eq:mi}, \eqref{eq:miiU}, \eqref{eq:miiQ}, \eqref{eq:Qiih} and \eqref{eq:Qiit} on the domain $\Dsf_\eps$ by scaling arguments and splitting of the integral domain respectively, to get
\begin{align}\label{eq:mi_fit}
\begin{split}
    \int_{\Dsf_\eps} \VC_\omega \epsb(\varphi):\epsb(\eps^{d-3}\mi\circ \Phi_\eps)\;dx=& -\eps^{d-2}\int_{\Gamma^N_\eps}\VC_2^\top\epsb(\Ti)(\eps x)n\cdot\varphi\;dS\\
&+\eps^{d-2}\int_\omega (\VC_1-\VC_2)\epsb(\varphi):\epsb(\mi)\circ\Phi_\eps\;dx.
\end{split}
\end{align}
\begin{align}\label{eq:Qiih_fit}
\begin{split}
    \int_{\Dsf_\eps}\VC_\omega\epsb(\varphi):\epsb(\hat{Q}^{(2)})\; dx=& \int_\omega (\VC_2-\VC_1)\epsb(\varphi):[\nabla\epsb(q_0)(x_0)x]\; dx\\
&-\gamma_g\int_{\Dsf_\eps}\nabla\Uii:\nabla\varphi\; dS\\
&+\gamma_g\int_{\Gamma^N_\eps}\nabla\Uii n\cdot\varphi\; dS\\
&+\int_{\Gamma^N_\eps}\VC_2^\top \epsb(\hat{Q}^{(2)})n\cdot\varphi\; dS\\
\end{split}
\end{align}
\begin{align}\label{eq:Qiit_fit}
\begin{split}
    \int_{\Dsf_\eps}\VC_\omega\epsb(\varphi):\epsb(\tilde{Q}^{(2)})\; dx=& \gamma_f\int_\omega [f_2(x_0)-f_1(x_0)]\cdot\varphi\; dx\\
&+\int_{\Gamma^N_\eps}\VC_2^\top \epsb(\tilde{Q}^{(2)})n\cdot\varphi\; dS\\
\end{split}
\end{align}
\begin{align}\label{eq:mii_fit}
\begin{split}
    \int_{\Dsf_\eps} \VC_\omega   \epsb(\varphi):&\epsb(\eps^{d-2}\mii\circ \Phi_\eps)\;dx=  -\eps^{d-1}\gamma_m\int_{\Gamma^m_\eps}\Ri(\eps x)\cdot\varphi\;dS-\eps\gamma_m\int_{\Gamma^m_\eps}(\eps^{d-2}\vi\circ \Phi_\eps)\cdot\varphi\;dS\\
&-\eps^{d-1}\gamma_m\int_{\Gamma^m_\eps}\Rii(\eps x)\cdot\varphi\;dS-\eps\gamma_m\int_{\Gamma^m_\eps} (\eps^{d-2}\vii\circ \Phi_\eps)\cdot\varphi\;dS\\
&-\eps^{d-2}\gamma_g\int_{\Dsf_\eps}\nabla(\vi\circ\Phi_\eps):\nabla\varphi\; dx-\eps^{d-2}\gamma_g\int_{\Dsf_\eps}\nabla(\vii\circ\Phi_\eps):\nabla\varphi\; dx\\
&-\eps^{d-1}\gamma_g\int_{\Gamma^N_\eps}\nabla(\Ri)(\eps x)n \cdot\varphi\; dS-\eps^{d-1}\gamma_g\int_{\Gamma^N_\eps}\nabla(\Rii)(\eps x)n\cdot \varphi\; dS\\
&+\eps^{d-1}\int_\omega (\VC_1-\VC_2)\epsb(\varphi):\epsb(\mii)\circ\Phi_\eps\;dx.
\end{split}
\end{align}
Now dividing \eqref{eq:lhs_G5},\eqref{eq:G5} by $\eps$ and subtracting \eqref{eq:mi_fit}-\eqref{eq:mii_fit} leaves us with

\begin{equation}
\int_{\Dsf_\eps} \VC_\omega \epsb(\varphi):\epsb(V_\eps)\;dx=G_\eps^5(\varphi)+G_\eps^6(\varphi),
\end{equation}
where we remember the simplified notation $V_\eps:=\eps^{-1}[\Qi_\eps-\Qi]-\eps^{d-3}\mi\circ \Phi_\eps-\Qii-\eps^{d-2}\mii \circ \Phi_\eps$ and
\begin{align}\label{eq:G6}
\begin{split}
    G_\eps^5(\varphi)=&\int_{\omega}(\VC_2-\VC_1)\epsb(\varphi):[\eps^{-1}(\epsb(q_0)\circ \Phi_\eps-\epsb(q_0)(x_0))-\nabla\eps(q_0)(x_0)x]\; dx\\
&+\gamma_f\int_{\omega}[(f_2\circ \Phi_\eps-f_1\circ \Phi_\eps)-(f_2(x_0)-f_1(x_0))]\cdot\varphi\; dx\\
&-\gamma_g\int_{\Dsf_\eps}[\eps^{-1}(\nabla\Ui_\eps-\nabla\Ui)-\nabla(\eps^{d-2}\vi\circ\Phi_\eps)-\nabla\Uii-\nabla(\eps^{d-2}\vii\circ\Phi_\eps)]:\nabla\varphi\;dx\\
&+\eps^{d-2}\int_\omega(\VC_1-\VC_2) \epsb(\varphi):\epsb(\mi)\circ \Phi_\eps\; dx\\
&+\eps^{d-1}\int_\omega(\VC_1-\VC_2) \epsb(\varphi):\epsb(\mii)\circ \Phi_\eps\; dx,
\end{split}
\end{align}
\begin{align}\label{eq:G8}
\begin{split}
    G_\eps^6(\varphi)=&-\eps\gamma_m\int_{\Gamma^m_\eps}(\eps^{-1}\Ui_\eps-\eps^{d-2}\Ri(\eps x)-\eps^{d-2}\vi\circ T_\eps-\eps^{d-2}\Rii(\eps x)-\eps^{d-2}\vii\circ T_\eps)\cdot\varphi\;dS\\
&-\eps^{-1}\int_{\Gamma^N_\eps}\VC_2^\top [\epsb(\Qi)-\eps^{d-1}\epsb(\Ti)(\eps x)]n\cdot\varphi\; dS\\
&-\int_{\Gamma^N_\eps}\VC_2^\top \epsb(\Qii)n\cdot\varphi\; dS\\
&-\gamma_g\eps^{-1}\int_{\Gamma^N_\eps}[\nabla \Ui-\eps^d\nabla(\Ri)(\eps x)] n\cdot\varphi\; dS\\
&-\gamma_g\int_{\Gamma^N_\eps}[\nabla \Uii-\eps^{d-1}\nabla(\Rii)(\eps x)] n\cdot\varphi\; dS.
\end{split}
\end{align}
In the following let $\varphi \in H^1_{\Gamma_\eps}(\Dsf_\eps)^d$ and $C$ denote a sufficiently large constant independent of $\varphi$ and $\eps$. We now want to estimate the operator norm of $G_\eps^k$, $k\in\{5,6\}$ with respect to $\|\cdot\|_\eps$:

\begin{itemize}
\item A Taylor's expansion and Hölder's inequality yield \[\left|\int_{\omega}(\VC_2-\VC_1)\epsb(\varphi):[\eps^{-1}(\epsb(q_0)\circ \Phi_\eps-\epsb(q_0)(x_0))-\nabla\eps(q_0)(x_0)x]\; dx\right| \le C \eps \|\varphi\|_\eps.\]
\item A Taylor's expansion followed by an application of Hölder's inequality with respect to $p=2^\ast$ and Lemma \ref{lma:scaling2} item (b) yield 
\[\left|\gamma_f\int_{\omega}[(f_2\circ \Phi_\eps-f_1\circ \Phi_\eps)-(f_2(x_0)-f_1(x_0))]\cdot\varphi\; dx\right|\le C \eps \|\varphi\|_\eps.\]
\item From Theorem \ref{thm:U2_main} we deduce \[
        \begin{split}
            \bigg|\gamma_g\int_{\Dsf_\eps} & [\eps^{-1}(\nabla\Ui_\eps-\nabla\Ui)-\nabla(\eps^{d-2}\vi\circ\Phi_\eps) \\
     & \hspace{4cm} -\nabla\Uii-\nabla(\eps^{d-2}\vii\circ\Phi_\eps)]:\nabla\varphi\;dx\bigg|\le C\eps\|\varphi\|_\eps.
        \end{split}
        \]
\item Furthermore, from Hölder's inequality it follows
\[\left|\eps^{d-2}\int_\omega(\VC_1-\VC_2) \epsb(\varphi):\epsb(\mi)\circ \Phi_\eps\; dx\right|\le C \eps \|\varphi\|_\eps.\]
\item Similarly, one gets
\[\left|\eps^{d-1}\int_\omega(\VC_1-\VC_2) \epsb(\varphi):\epsb(\mii)\circ \Phi_\eps\; dx\right|\le C \eps \|\varphi\|_\eps.\]
\end{itemize}
Combining these estimates, we get $\|G_\eps^5\|\le C\eps$. Next we consider the boundary integral terms.
\begin{itemize}
\item By smuggling in $\eps^{-1}\Ui$ and $\Uii$ we get
\begin{align*}
&\left|\eps\gamma_m\int_{\Gamma^m_\eps}(\eps^{-1}\Ui_\eps-\eps^{d-2}\Ri(\eps x)-\eps^{d-2}\vi\circ \Phi_\eps-\eps^{d-2}\Rii(\eps x)-\eps^{d-2}\vii\circ \Phi_\eps)\cdot\varphi\;dS\right|\le\\
&\left|\eps\gamma_m\int_{\Gamma^m_\eps}(\eps^{-1}\Ui_\eps-\eps^{-1}\Ui-\eps^{d-2}\vi\circ \Phi_\eps-\Uii-\eps^{d-2}\vii\circ \Phi_\eps)\cdot\varphi\;dS\right|\\
&+\left|\eps\gamma_m\int_{\Gamma^m_\eps}(\eps^{-1}\Ui-\eps^{d-2}\Ri(\eps x))\cdot\varphi\;dS\right|\\
&+\left|\eps\gamma_m\int_{\Gamma^m_\eps}(\Uii-\eps^{d-2}\Rii(\eps x))\cdot\varphi\;dS\right|.
\end{align*}
The first term on the right hand side can be estimated by Hölder's inequality, the scaled trace inequality and Theorem \ref{thm:U2_main}, whereas the remaining terms can be estimated by Hölder's inequality, the scaled trace inequality and Lemma \ref{lma:remainder_est} item (i). Thus we conclude
\[
    \begin{split}
    \bigg|\eps\gamma_m\int_{\Gamma^m_\eps}(& \eps^{-1}\Ui_\eps-\eps^{d-2}\Ri(\eps x)  -\eps^{d-2}\vi\circ \Phi_\eps \\
                                           & \hspace{2cm} -\eps^{d-2}\Rii(\eps x)-\eps^{d-2}\vii\circ \Phi_\eps)\cdot\varphi\;dS\bigg|\le C\eps\|\varphi\|_\eps.
\end{split}
\]

\item A similar computation to Lemma \ref{lma:remainder_est} and the scaled trace inequality yield
\[\left|\eps^{-1}\int_{\Gamma^N_\eps}\VC_2^\top [\epsb(\Qi)-\eps^{d-1}\epsb(\Ti)(\eps x)]n\cdot\varphi\; dS\right|\le C\eps^{\frac{1}{2}}\ln(\eps^{-1})\|\varphi\|_\eps.\]

\item A similar argument shows
\[\left|\int_{\Gamma^N_\eps}\VC_2^\top \epsb(\Qii)n\cdot\varphi\; dS\right|\le C\eps^{\frac{1}{2}}\ln(\eps^{-1})\|\varphi\|_\eps.\]

\item Furthermore, the remaining terms can be estimated by Hölder's inequality, the scaled trace inequality and Lemma \ref{lma:remainder_est} item (iii) with $m=d-1$ and $m=d$ respectively, to deduce  
\[\left|\gamma_g\eps^{-1}\int_{\Gamma^N_\eps}[\nabla \Ui-\eps^d\nabla(\Ri)(\eps x)] n\cdot\varphi\; dS\right|\le C\eps^{\frac{d}{2}}\|\varphi\|_\eps,\]
\[\left|\gamma_g\int_{\Gamma^N_\eps}[\nabla \Uii-\eps^{d-1}\nabla(\Rii)(\eps x)] n\cdot\varphi\; dS\right|\le C\eps^{\frac{d}{2}}\|\varphi\|_\eps.\]

\end{itemize}
 Hence, we conclude $\|G_\eps^6\|\le C\eps^{\frac{1}{2}}\ln(\eps^{-1})$.

\subsection*{Acknowledgements}
Phillip Baumann has been funded by the Austrian Science Fund (FWF) project P 32911.

\bibliographystyle{plain}
\bibliography{refs}

\begin{thebibliography}{10}

\bibitem{b_AM_2008a}
H.~Ammari.
\newblock {\em An introduction to mathematics of emerging biomedical imaging}.
\newblock Springer, Berlin New York, 2008.

\bibitem{a_AMKAKI_2005a}
H.~Ammari, H.~Kang, and K.~Kim.
\newblock Polarization tensors and effective properties of anisotropic
  composite materials.
\newblock {\em Journal of Differential Equations}, 215(2):401--428, 2005.

\bibitem{a_AM_2002}
H.~Ammari, H.~Kang, G.~Nakamura, and K.~Tanuma.
\newblock Complete asymptotic expansions of solutions of the system of
  elastostatics in the presence of an inclusion of small diameter and detection
  of an inclusion.
\newblock {\em Journal of elasticity and the physical science of solids},
  67(2):97--129, 2002.

\bibitem{phd_AM_2003a}
S.~Amstutz.
\newblock {\em Aspects théoriques et num\'eriques en optimisation de forme
  topologique}.
\newblock PhD thesis, L'institut National des Sciences Appliquées de Toulouse,
  2003.

\bibitem{a_AM_2006a}
S.~Amstutz.
\newblock Sensitivity analysis with respect to a local perturbation of the
  material property.
\newblock {\em Asym. Analysis}, 49(1,2):87--108, 2006.

\bibitem{a_AM_2006b}
S.~Amstutz.
\newblock Topological sensitivity analysis for some nonlinear {PDE} systems.
\newblock {\em Journal de Mathématiques Pures et Appliquées}, 85(4):540--557,
  2006.

\bibitem{a_AMAN_2006a}
S.~Amstutz and H.~Andrä.
\newblock A new algorithm for topology optimization using a level-set method.
\newblock {\em J. Comput. Phys.}, 216(2):573--588, 2006.

\bibitem{a_AMBO_2017a}
S.~Amstutz and A.~Bonnaf\'e.
\newblock Topological derivatives for a class of quasilinear elliptic
  equations.
\newblock {\em Journal de Math\'ematiques Pures et Appliqu\'ees},
  107(4):367--408, 2017.

\bibitem{a_AMGA_2019a}
S.~Amstutz and P.~Gangl.
\newblock Topological derivative for the nonlinear magnetostatic problem.
\newblock {\em Electron. Trans. Numer. Anal.}, 51:169--218, 2019.

\bibitem{a_AMNO_2010a}
S.~Amstutz and A.~A. Novotny.
\newblock Topological asymptotic analysis of the {K}irchhoff plate bending
  problem.
\newblock {\em ESAIM: Control, Optimisation and Calculus of Variations},
  17(3):705--721, 2010.

\bibitem{a_AMNOANVA_2014a}
S.~Amstutz, A.~A. Novotny, and N.~Van~Goethem.
\newblock Topological sensitivity analysis for elliptic differential operators
  of order $2m$.
\newblock {\em J. Differential Equations}, 256(4):1735--1770, 2014.

\bibitem{a_BEMARA_2017a}
E.~Beretta, A.~Manzoni, and L.~Ratti.
\newblock A reconstruction algorithm based on topological gradient for an
  inverse problem related to a semilinear elliptic boundary value problem.
\newblock {\em Inverse Problems}, 33(3):035010, 2017.

\bibitem{a_BO_2018a}
M.~Bonnet.
\newblock Inverse acoustic scattering using high-order small-inclusion
  expansion of misfit function.
\newblock {\em Inverse Probl. Imaging}, 12(4):921--953, 2018.

\bibitem{a_BOCO_2017a}
M.~Bonnet and R.~Cornaggia.
\newblock Higher order topological derivatives for three-dimensional
  anisotropic elasticity.
\newblock {\em ESAIM: Mathematical Modelling and Numerical Analysis},
  51(6):2069--2092, 2017.

\bibitem{c_DE_2018b}
M.~C. Delfour.
\newblock Control, shape, and topological derivatives via minimax
  differentiability of {L}agrangians.
\newblock In {\em Springer INdAM Series}, pages 137--164. Springer
  International Publishing, 2018.

\bibitem{c_DEST_2016a}
M.~C. Delfour and K.~Sturm.
\newblock Minimax differentiability via the averaged adjoint for control/shape
  sensitivity*.
\newblock {\em IFAC-PapersOnLine}, 49(8):142--149, 2016.

\bibitem{b_DEZO_2011a}
M.~C Delfour and J-P Zol{\'e}sio.
\newblock {\em Shapes and geometries: metrics, analysis, differential calculus,
  and optimization}.
\newblock SIAM, 2011.

\bibitem{a_ESKOSC_1994a}
H.~A. Eschenauer, V.~V. Kobelev, and A.~Schumacher.
\newblock Bubble method for topology and shape optimization of structures.
\newblock {\em Structural Optimization}, 8(1):42--51, 1994.

\bibitem{b_EV_2010}
L.C. Evans.
\newblock {\em Partial differential equations}.
\newblock Graduate studies in mathematics. American Mathematical Society, 2010.

\bibitem{a_GAST_2020a}
P.~Gangl and K.~Sturm.
\newblock A simplified derivation technique of topological derivatives for
  quasi-linear transmission problems.
\newblock {\em ESAIM Control Optim. Calc. Var.}, 26:Paper No. 106, 20, 2020.

\bibitem{a_GAST_2020b}
P.~Gangl and K.~Sturm.
\newblock Topological derivative for {PDE}s on surfaces.
\newblock {\em preprint}, 2020.

\bibitem{a_GAST_2021a}
P.~Gangl and K.~Sturm.
\newblock Asymptotic analysis and topological derivative for 3{D} quasi-linear
  magnetostatics.
\newblock {\em ESAIM Math. Model. Numer. Anal.}, 55(suppl.):S853--S875, 2021.

\bibitem{a_GAGUMA_2001a}
S.~Garreau, P.~Guillaume, and M.~Masmoudi.
\newblock The topological asymptotic for {PDE} systems: the elasticity case.
\newblock {\em SIAM Journal on Control and Optimization}, 39(6):1756--1778,
  2001.

\bibitem{a_HAMA_2004a}
M.~Hassine and M.~Masmoudi.
\newblock The topological asymptotic expansion for the quasi-{S}tokes problem.
\newblock {\em ESAIM Control Optim. Calc. Var.}, 10(4):478--504, 2004.

\bibitem{a_HILA_2008a}
M.~Hintermüller and A.~Laurain.
\newblock Electrical impedance tomography: from topology to shape.
\newblock {\em Control and Cybernetics}, 37(4):913--933, 2008.

\bibitem{a_HILA_2011a}
M.~Hintermüller and A.~Laurain.
\newblock Optimal shape design subject to elliptic variational inequalities.
\newblock {\em SIAM Journal on Control and Optimization}, 49(3):1015--1047,
  2011.

\bibitem{a_HILANO_2011a}
M.~Hintermüller, A.~Laurain, and A.~A. Novotny.
\newblock Second-order topological expansion for electrical impedance
  tomography.
\newblock {\em Advances in Computational Mathematics}, 36(2):235--265, 2011.

\bibitem{b_HIULUL_2009a}
M.~Hinze, R.~Pinnau, M.~Ulbrich, and S.~Ulbrich.
\newblock {\em Optimization with {PDE} constraints}.
\newblock Springer, New York, 2009.

\bibitem{a_IGNAROSOSZ_2009a}
M.~Iguernane, S.~Nazarov, J.-R. Roche, J.~Sokolowski, and K.~Szulc.
\newblock Topological derivatives for semilinear elliptic equations.
\newblock {\em International Journal of Applied Mathematics and Computer
  Science}, 19(2), 2009.

\bibitem{b_ITKU_2008a}
K.~Ito and K.~Kunisch.
\newblock {\em Lagrange multiplier approach to variational problems and
  applications}.
\newblock Society for Industrial and Applied Mathematics, Philadelphia, PA,
  2008.

\bibitem{a_MAPOSA_2005a}
M.~Masmoudi, J.~Pommier, and B.~Samet.
\newblock The topological asymptotic expansion for the {M}axwell equations and
  some applications.
\newblock {\em Inverse Problems}, 21(2):547--564, 2005.

\bibitem{b_MANA_2000b}
V.~Mazya, S.~Nazarov, and B.~Plamenevskij.
\newblock {\em Asymptotic theory of elliptic boundary value problems in
  singularly perturbed domains. {V}ol. {I}}, volume 111 of {\em Operator
  Theory: Advances and Applications}.
\newblock Birkh\"{a}user Verlag, Basel, 2000.

\bibitem{b_MANA_2000a}
V.~Mazya, S.~Nazarov, and B.~Plamenevskij.
\newblock {\em Asymptotic theory of elliptic boundary value problems in
  singularly perturbed domains. {V}ol. {II}}, volume 112 of {\em Operator
  Theory: Advances and Applications}.
\newblock Birkh\"{a}user Verlag, Basel, 2000.

\bibitem{a_NOFETAPA_2003aa}
A.~A. Novotny, R.~A. Feijóo, E.~Taroco, and C.~Padra.
\newblock Topological sensitivity analysis.
\newblock {\em Comput. Methods Appl. Mech. Engrg.}, 192(7-8):803--829, 2003.

\bibitem{b_NOSO_2013a}
A.~A. Novotny and J.~Sokolowski.
\newblock {\em Topological derivatives in shape optimization}.
\newblock Springer Berlin Heidelberg, 2013.

\bibitem{a_SIMATO_2010a}
M.~Silva, M.~Matalon, and D.~A. Tortorelli.
\newblock Higher order topological derivatives in elasticity.
\newblock {\em International Journal of Solids and Structures},
  47(22):3053--3066, 2010.

\bibitem{a_SOZO_1999a}
J.~Sokolowski and A.~Zochowski.
\newblock On the topological derivative in shape optimization.
\newblock {\em SIAM Journal on Control and Optimization}, 37(4):1251--1272,
  1999.

\bibitem{a_ST_2015a}
K.~Sturm.
\newblock Minimax {L}agrangian approach to the differentiability of nonlinear
  {PDE} constrained shape functions without saddle point assumption.
\newblock {\em SIAM Journal on Control and Optimization}, 53(4):2017--2039,
  2015.

\bibitem{a_ST_2020a}
K.~Sturm.
\newblock Topological sensitivities via a {L}agrangian approach for semilinear
  problems.
\newblock {\em Nonlinearity}, 33(9):4310--4337, 2020.

\bibitem{b_WL_1987a}
J.~Wloka.
\newblock {\em Partial differential equations}.
\newblock Cambridge University Press, Cambridge Cambridgeshire New York, 1987.

\bibitem{b_ZI_1989a}
W.~P. Ziemer.
\newblock {\em Weakly differentiable functions}, volume 120 of {\em Graduate
  Texts in Mathematics}.
\newblock Springer-Verlag, New York, 1989.

\end{thebibliography}

\end{document}